\newcommand*\circled[1]{\tikz[baseline=(char.base)]{
    \node[shape=circle,draw,inner sep=0.5pt] (char) {#1};}}
\newtheorem{proposition}{Proposition}[section]
\newtheorem{theorem}[proposition]{Theorem}
\newtheorem{lemma}[proposition]{Lemma}
\newtheorem{example}[proposition]{Example}
\newtheorem{definition}[proposition]{Definition}
\newtheorem{definition-lemma}[proposition]{Definition-Lemma}
\newtheorem{remark}[proposition]{Remark}
\newtheorem{corollary}[proposition]{Corollary}
\newtheorem*{question*}{Questions}
\newtheorem{terminology}[proposition]{Terminology}
\numberwithin{equation}{proposition}
\newcommand{\etale}{\'etal\@ifstar{\'e}{e\xspace}}
\newcommand\blfootnote[1]{%
  \begingroup
  \renewcommand\thefootnote{}\footnote{#1}%
  \addtocounter{footnote}{-1}%
  \endgroup
}
\newcommand{\Addresses}{{% additional braces for segregating \footnotesize
  \bigskip
  %\footnotesize
\noindent\textsc{School of Mathematics, Institute for Advanced Study,}\par\nopagebreak
\noindent Email: \texttt{tong.g.h.zhou@gmail.com}
  }}
\newcommand{\isoto}{\xrightarrow{\raisebox{-0.5ex}[0ex][0ex]{$\sim$}}}
\newcommand{\isoot}{\xleftarrow{\raisebox{-0.5ex}[0ex][0ex]{$\sim$}}}
\newcommand{\rightthreearrows}{\mathrel{\substack{\textstyle\rightarrow\\[-0.6ex]
                      \textstyle\rightarrow \\[-0.6ex]
                      \textstyle\rightarrow}}}
\newcommand{\ZZ}{\mathbf{Z}}
\newcommand{\NN}{\mathbf{N}}
\newcommand{\FF}{\mathbf{F}}
\newcommand{\CF}{\mathcal{F}}
\newcommand{\CG}{\mathcal{G}}
\newcommand{\Gm}{\mathbf{G}_m}
\newcommand{\X}{\times}
\newcommand{\CCC}{\mathcal{C}}
\newcommand{\CH}{\mathcal{H}}
\newcommand{\CL}{\mathcal{L}}
\newcommand{\AAA}{\mathbf{A}}
\newcommand{\DD}{\mathbb{D}}
\newcommand{\CO}{\mathcal{O}}
\newcommand{\RHom}{R\underline{Hom}}
\newcommand{\Spa}{\mathrm{Spa}}
\newcommand{\DX}{D^{(b)}_{zc}(X)}
\newcommand{\Dbzc}{D^{(b)}_{zc}}
\newcommand{\Dbzctf}{D^{(b)}_{zctf}}
\newcommand{\CX}{\mathcal{X}}
\newcommand{\cone}{\mathrm{cone}}
\newcommand{\Ker}{\mathrm{Ker}}
\newcommand{\Perv}{\mathrm{Perv}}
\newcommand{\Sh}{\mathrm{Sh}}
\newcommand{\piwa}{(1)^\tau}
\newcommand{\miwa}{(-1)^\tau}
\title{Vanishing Cycles for Zariski-Constructible Sheaves on Rigid Analytic Varieties}
\author{Tong Zhou}
\date{}
\begin{document}
\maketitle
\begin{abstract}
\blfootnote{March 2025}
We develop a theory of nearby and vanishing cycles in the context of finite-coefficient Zariski-constructible sheaves over a non-archimedean field which is non-trivially valued, complete, algebraically closed, and of mixed characteristic or equal characteristic zero. Apart from basic properties, we show that they preserve Zariski-constructibility, have a Milnor fibre interpretation, satisfy Beilinson's gluing construction, are perverse t-exact, and commute with Verdier duality.

\end{abstract}
\renewcommand{\baselinestretch}{1.0}\normalsize
\tableofcontents
\renewcommand{\baselinestretch}{1.0}\normalsize
\section{Introduction}\label{sec_intro}
Nearby and vanishing cycles were introduced by Alexandre Grothendieck in \cite{SGA7}, to study changes of the sheaf cohomology in a $1$-parameter family. They are as fundamental as the six functors, and have been extensively studied in almost all contexts of (or similar to) constructible sheaves (with the characteristic of the coefficient prime to that of the base field): \etale cohomology of schemes (\cite[Exposé I, XIII]{SGA7}), Betti cohomology of complex analytic varieties (\cite[Exposé XIV]{SGA7}), D-modules and mixed Hodge modules (\cites{kashiwara_vanishing_1983, saito_modules_1988}), and \etale cohomology of formal schemes (\cite{berkovich_vanishing_1994}). There is also a different construction of the unipotent parts of nearby and vanishing cycles of perverse sheaves by Alexander Beilinson (\cite{beilinson_how_1987}), applicable in various contexts.\\

In non-archimedean geometry, there exists so far two definitions of nearby and vanishing cycles, by Vladimir Berkovich (\cite{berkovich_vanishing_1996}) and Lorenzo Ramero (\cite{ramero_hasse-arf_2012}) respectively. However, a full theory is missing, partly due to the lack of a good notion of constructibility for \etale sheaves on non-archimedean spaces. Recently, a six-functor formalism and a theory of perverse sheaves have been constructed for Zariski-constructible sheaves on rigid analytic varieties, by Bhargav Bhatt and David Hansen (\cite{bhatt_six_2022}). These are direct analogues of constructible sheaves in complex analytic geometry and behave similarly. In this work, we develop a full theory of nearby and vanishing cycles in this context. It will be applied in \cite{zhou_microlocal_2025} to develop a microlocal sheaf theory in the sense of \cite{kashiwara_sheaves_1990}.\\

Let $K$ be a non-archimedean field which is non-trivially valued, complete, algebraically closed, and of characteristic $(0,p)$, $p\geq0$. Let $\Lambda$ be either $\FF_{\ell^r}$ or $\ZZ/\ell^r$, with $\ell=p\neq0$ allowed. We work with rigid analytic varieties over $K$ (\textit{i.e.}, adic spaces locally topologically of finite type over $\Spa(K,K^\circ)$) and $\Lambda$-valued Zariski-constructible \etale sheaves\footnote{This will henceforth be referred to as “our context”.}. In §\ref{sec_defofvan}, we define the nearby and vanishing cycles, construct the distinguished triangle $i^*j_*\xrightarrow{\mathrm{sp}}\psi\xrightarrow{\iota}\psi\miwa\rightarrow$ (Lemma \ref{lem_T-1}) and \textit{can} and \textit{var} triangles (Definition \ref{def_sp_can_var}), and prove basic properties of them. In particular, nearby and vanishing cycles commute with smooth pullbacks ($\ell\neq p$) and quasi-compact quasi-separated pushforwards (Lemma \ref{lem_smoothqcqs}), and with analytification (Lemma \ref{lem_comparisonphi}). The “$\miwa$” above denotes the Iwasawa twist, which will be briefly reviewed. It is a formalism introduced by Beilinson and developed in \cite[\nopp §2]{lu_duality_2019} for eliminating the choice of a topological generator of the local fundamental group, improving the “canonicity” of constructions. In §\ref{sec_finiteness_Milnor}, we prove nearby and vanishing cycles preserve Zariski-constructibility (Proposition \ref{prop_phipreserveszc}), and give a Milnor fibre (tube) interpretation of their stalks ($\ell\neq p$) (Proposition \ref{prop_nearbyfibinterp}), analogous to the classical version in the complex analytic context. We also prove some preliminary results about perverse sheaves, in particular, a General Artin-Grothendieck Vanishing ($\ell\neq p$) (Proposition \ref{prop_artinvanishing}), analogous to its counterpart in the complex analytic context (\cite[\nopp 10.3.17]{kashiwara_sheaves_1990}). In §\ref{sec_beilinson_gluing} ($\ell\neq p$), we discuss Beilinson's construction of the unipotent nearby and vanishing cycles and the maximal extension functor, and his gluing theorem, in our context. In §\ref{sec_perversity_duality} ($\ell\neq p$), we prove nearby and vanishing cycles are perverse t-exact and commute with Verdier duality, which we summarise as the following theorem.

\begin{theorem}[Theorem \ref{thm_psi_t_exact_duality}, Corollary \ref{cor_ptexactphigeneralcoefficient}, Theorem \ref{thm_phi_t_exact_duality}]
The set-up is as above, $\Lambda=\FF_{\ell^r}$ or $\ZZ/\ell^r$, $\ell\neq p$. Let $f: X\rightarrow\AAA^1$ be a map of rigid analytic varieties. Denote $\psi_f[-1]$ (resp. $\phi_f[-1]$) by $\Psi_f$ (resp. $\Phi_f$). Then:\\
    (1) $\Psi_f$ is perverse t-exact in the following sense: 
    for every $\CF\in$ $^p\!D^{\leq 0}_{zctf}(X^\times)$ and $\CG\in$ $^p\!D^{\geq 0}_{zctf}(X^\times)$ which extend to objects of $D^{(b)}_{zc}(X)$, we have $\Psi_f(\CF)\in$ $^p\!D^{\leq 0}_{zctf}(X_0)$ and $\Psi_f(\CG)\in$ $^p\!D^{\geq 0}_{zctf}(X_0)$;\\
    (2) $\Phi_f: D^{(b)}_{zctf}(X)\rightarrow D^{(b)}_{zctf}(X_0)$ is perverse t-exact;\\
    (3) for $\CF\in$ $D^{(b)}_{zc}(X^\X)$ which extends to an object of $\DX$, there is a canonical isomorphism $g: \Psi_f\mathbb{D}\CF\isoto(\mathbb{D}\Psi_f\CF)(1)$ in $D(X_0\bar\X B\mu)$;\\
    (4) for $\CF\in$ $D^{(b)}_{zc}(X)$, there is a natural isomorphism\footnote{We leave it to the reader to decide if it is canonical or not.} $h: \Phi_f\mathbb{D}\CF\isoto(\mathbb{D}\Phi_f\CF)\miwa(1)$ in $D(X_0\bar\X B\mu)$.\\\\
    Furthermore, the $\mathrm{can}$ and $\mathrm{var}$ triangles are dual to each other in the sense that we have a commutative diagram:
% https://q.uiver.app/#q=WzAsNixbMiwwLCJcXFBzaVxcbWF0aGJie0R9XFxtYXRoY2Fse0Z9Il0sWzIsMSwiKFxcbWF0aGJie0R9XFxQc2lcXG1hdGhjYWx7Rn0pKDEpIl0sWzQsMCwiXFxQaGlcXG1hdGhiYntEfVxcbWF0aGNhbHtGfSJdLFs0LDEsIihcXG1hdGhiYntEfVxcUGhpXFxtYXRoY2Fse0Z9KSgtMSleXFx0YXUoMSkiXSxbMCwwLCIoaV4qXFxtYXRoYmJ7RH1cXG1hdGhjYWx7Rn0pWy0xXSJdLFswLDEsIihcXG1hdGhiYntEfWleIVxcbWF0aGNhbHtGfSlbLTFdIl0sWzAsMSwiZyJdLFswLDEsIlxcc2ltZXEiLDJdLFsyLDMsImgiXSxbMCwyLCJjYW4iXSxbMSwzLCJcXG1hdGhiYntEfSh2YXIoMSleXFx0YXUoLTEpKSJdLFs0LDUsIlxcYWxwaGEiXSxbNCwwLCJzcCJdLFs1LDEsIlxcbWF0aGJie0R9KGNvc3AoMSleXFx0YXUoLTEpKSJdXQ==
\[\begin{tikzcd}
	{(i^*\mathbb{D}\mathcal{F})[-1]} && {\Psi\mathbb{D}\mathcal{F}} && {\Phi\mathbb{D}\mathcal{F}} \\
	{(\mathbb{D}i^!\mathcal{F})[-1]} && {(\mathbb{D}\Psi\mathcal{F})(1)} && {(\mathbb{D}\Phi\mathcal{F})(-1)^\tau(1)}
	\arrow["\mathrm{sp}", from=1-1, to=1-3]
	\arrow["\alpha", from=1-1, to=2-1]
	\arrow["\mathrm{can}", from=1-3, to=1-5]
	\arrow["g", from=1-3, to=2-3]
	\arrow["\simeq"', from=1-3, to=2-3]
	\arrow["h", from=1-5, to=2-5]
	\arrow["{\mathbb{D}(\mathrm{cosp}(1)^\tau(-1))}", from=2-1, to=2-3]
	\arrow["{\mathbb{D}(\mathrm{var}(1)^\tau(-1))}", from=2-3, to=2-5]
\end{tikzcd}\]
\end{theorem}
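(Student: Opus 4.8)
The strategy is to isolate a single genuinely geometric input --- a support estimate coming from Artin--Grothendieck vanishing on Milnor tubes --- and to obtain everything else by Verdier duality and by bookkeeping of the Iwasawa and Tate twists, following in outline the schematic treatment of duality for nearby cycles (\cite{lu_duality_2019}, \cite{illusie_autour_1994}) transported to our setting. All four assertions are \'etale-local on $X_0$, and $\psi_f,\phi_f$ commute with \'etale localization, smooth pullbacks and quasi-compact quasi-separated pushforwards (Lemma \ref{lem_smoothqcqs}); I would therefore first reduce to the case where $X$ is affinoid, $f$ is non-constant on every irreducible component (components with $f\equiv 0$ lie outside $X^\times$ and are invisible to $\psi_f$), and $\CF$ is assembled by successive extensions from shifted lisse sheaves on the smooth locally closed strata of a stratification of $X^\times$ adapted to $\CF$. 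The analytification comparison (Lemma \ref{lem_comparisonphi}) is available only as a sanity check.

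\emph{Support half of (1), hence of (2).} Fix a geometric point $\bar x$ of $X_0$. By the Milnor-tube interpretation (Proposition \ref{prop_nearbyfibinterp}), $(\psi_f\CF)_{\bar x}\simeq R\Gamma(F_{\bar x},\CF)$, with $F_{\bar x}$ a partially proper (``Stein'') rigid space of dimension $\le\dim_x X_0$ varying in a family over $X_0$. Feeding $\CF\in{}^p\!D^{\leq 0}_{zctf}(X^\times)$ into $R\Gamma(F_{\bar x},-)$ and bounding its cohomological amplitude by the General Artin--Grothendieck Vanishing (Proposition \ref{prop_artinvanishing}), the usual dévissage over the strata of $X_0$ (comparing, on each stratum, the Milnor tube with the fibre dimension) yields $\dim\operatorname{supp}\mathcal{H}^q(\psi_f\CF)\le -q-1$, which is exactly $\Psi_f\CF\in{}^p\!D^{\leq 0}_{zctf}(X_0)$. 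Applied to $j^*\CF$ for perverse $\CF\in D^{(b)}_{zc}(X)$, the $\mathrm{can}$ triangle $i^*\CF[-1]\to\Psi_f\CF\to\Phi_f\CF\to i^*\CF$ (Definition \ref{def_sp_can_var}) and the perverse right t-exactness of $i^*$ then force $\Phi_f\CF\in{}^p\!D^{\leq 0}_{zctf}(X_0)$.

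\emph{Assertion (3), hence the cosupport halves of (1) and (2).} For the duality isomorphism $g\colon\Psi_f\mathbb{D}\CF\isoto(\mathbb{D}\Psi_f\CF)(1)$ I would apply $\mathbb{D}$ to the specialization triangle $i^*j_*\CF\xrightarrow{\mathrm{sp}}\psi_f\CF\xrightarrow{\iota}\psi_f\CF\miwa\rightarrow$ of Lemma \ref{lem_T-1}. Writing $\psi_f\CF$ as $i^*R\bar\jmath_*(\cdot)$ for the appropriate open immersion $\bar\jmath$ onto the relevant ``punctured-disk'' cover (the construction of §\ref{sec_defofvan}), the six-functor formalism gives $\mathbb{D}i^*=i^!\mathbb{D}$, $\mathbb{D}R\bar\jmath_*=R\bar\jmath_!\mathbb{D}$, and $\mathbb{D}(i^*j_*\CF)=i^!j_!\mathbb{D}\CF$, so $\mathbb{D}$ of the $\mathrm{sp}$ triangle is the cospecialization triangle for $\mathbb{D}\CF$; the structure of $R\bar\jmath_*$ on the constant sheaf recorded by Lemma \ref{lem_T-1} --- cohomology $\Lambda\oplus\Lambda\miwa[-1]$ --- is precisely what converts the shift discrepancy between $i^!R\bar\jmath_!$ and $i^*R\bar\jmath_*$ into the Tate twist $(1)$ rather than a bare shift, yielding $g$ with the stated target and monodromy-equivariance. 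For the unipotent summand the self-duality of Beilinson's maximal extension (§\ref{sec_beilinson_gluing}) pins down the normalization. Granting $g$: if $\CF\in{}^p\!D^{\geq 0}_{zctf}(X^\times)$ then $\mathbb{D}\CF\in{}^p\!D^{\leq 0}_{zctf}$, so by the support half $\Psi_f\mathbb{D}\CF\in{}^p\!D^{\leq 0}_{zctf}(X_0)$, hence $(\mathbb{D}\Psi_f\CF)(1)$ and so $\mathbb{D}\Psi_f\CF$ lie in ${}^p\!D^{\leq 0}_{zctf}(X_0)$, hence $\Psi_f\CF\in{}^p\!D^{\geq 0}_{zctf}(X_0)$; thus $\Psi_f$ is perverse t-exact, which is (1). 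Dually, the $\mathrm{var}$ triangle (Definition \ref{def_sp_can_var}), the perverse left t-exactness of $i^!$, and the t-exactness of $\Psi_f$ give $\Phi_f\CF\in{}^p\!D^{\geq 0}_{zctf}(X_0)$, so $\Phi_f$ is perverse t-exact, which is (2); the passage from $\Lambda=\FF_\ell$ to $\FF_{\ell^r}$ and $\ZZ/\ell^r$ (Corollary \ref{cor_ptexactphigeneralcoefficient}) is the standard dévissage along $0\to\FF_\ell\to\ZZ/\ell^r\to\ZZ/\ell^{r-1}\to 0$.

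\emph{Assertion (4) and the compatibility diagram.} For $h$ I would dualize the $\mathrm{can}$ triangle for $\mathbb{D}\CF$: after inserting $g$ and $\mathbb{D}i^*=i^!\mathbb{D}$ it must match the $\mathrm{var}$ triangle for $\CF$, and the extra twist $\miwa$ in $h\colon\Phi_f\mathbb{D}\CF\isoto(\mathbb{D}\Phi_f\CF)\miwa(1)$ enters precisely because the self-duality of the Lemma \ref{lem_T-1} triangle swaps $\mathrm{sp}$ with $\mathrm{cosp}$ and multiplies by $\miwa$. The asserted commutative square then expresses that the octahedron defining $\Phi_f$ from $\mathrm{can}$ and its dual octahedron agree under the three duality isomorphisms (those on $\psi_f$, on $i^*j_*$, and on $\phi_f$); I would check this by unwinding the definitions of §\ref{sec_defofvan}, the only real content being naturality together with the compatibility of these three isomorphisms on the connecting maps. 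The main obstacle, beyond the geometric content already packaged in Propositions \ref{prop_artinvanishing} and \ref{prop_nearbyfibinterp}, is foundational: making precise that $\psi_f$ is computed as $i^*R\bar\jmath_*$ for an open immersion $\bar\jmath$ to which the six-functor $\mathbb{D}$-compatibilities of Bhatt--Hansen apply --- equivalently, reconciling the hands-on construction of §\ref{sec_defofvan} with such a description --- so that $\mathbb{D}$ genuinely carries the specialization triangle to the cospecialization triangle on the nose. Wild ramification along $X_0$ is permitted throughout and causes no additional difficulty, since none of these arguments are sensitive to it.
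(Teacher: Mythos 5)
Your proposal takes a genuinely different route from the paper, and I want to flag that the route as written has gaps that are not merely cosmetic.

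For the support half of (1), the Milnor-tube bound alone is too coarse. At a classical point $x$ lying on a $d$-dimensional stratum of $X_0$, Proposition \ref{prop_nearbyfibinterp} identifies $(\psi_f\CF)_x$ with $R\Gamma(U_{\epsilon,a},\CF)$ for a Milnor fibre $U_{\epsilon,a}$ which is an affinoid of dimension $\dim X-1$ \emph{independently of $d$}. Restricting a sheaf in ${}^p\!D^{\leq 0}_{zc}(X^\times)$ to the Zariski-closed $U_{\epsilon,a}$ lands it in ${}^p\!D^{\leq 0}(U_{\epsilon,a})$ (closed pullbacks are only perverse \emph{right} t-exact), and then Lemma \ref{lem_wSteinartinvan}/\ref{lem_perversecriterion} give $R\Gamma(U_{\epsilon,a},\CF)\in D^{\leq 0}$, hence $(\psi_f\CF)_x\in D^{\leq 0}$. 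To reach $D^{\leq -d-1}$ at such $x$, which is what $\dim\operatorname{supp}\CH^q(\psi_f\CF)\leq -q-1$ really demands, one has to cut the Milnor fibre down by a slice transverse to the stratum at $x$ and know that $\psi_f$ is compatible with that slicing, or to know that for generic $a$ the restriction to $X_a$ is perverse $(-1)$-t-exact by a Bertini/non-characteristicity argument. That transversality input is the whole content hiding behind your phrase ``the usual dévissage over the strata of $X_0$''; it is not developed in the paper, it is not a consequence of Propositions \ref{prop_artinvanishing} or \ref{prop_nearbyfibinterp}, and it would itself require a singular-support or generic-smoothness lemma for $f$ restricted to the strata. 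The paper sidesteps all of this by fixing a topological generator $t$, passing to a finite extension $F'/F$ so that $\Psi_f(\CF)'$ decomposes into generalized $\lambda$-eigenspaces for $t$, twisting by the rank-one Kummer local system $\CL^{\lambda_0}$ to convert the $\lambda$-part into the \emph{unipotent} part, and then invoking Lemma \ref{lem_psiuniistexact}, where Beilinson's $(t-1)$-filtration reduces everything to the known perverse t-exactness of $i^*j_*j^*$ (Example \ref{ex_j_exact}). That trick proves left and right t-exactness simultaneously with no geometric transversality whatsoever, and avoids the logical dependence of (1) on (3) that your plan introduces.

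For (3) and (4), your own caveat at the end is the decisive one. Writing $\psi_f$ as $i^*R\bar\jmath_*$ for a single open immersion $\bar\jmath$ runs into exactly the obstruction of Remark \ref{rmk_nonZariskiConst_example} and the warning just after Definition \ref{def_van}: $\varinjlim i^*j_*p_{n*}p_n^*$ is \emph{not} $i^*j_*\varinjlim p_{n*}p_n^*$ here, because $X^\times$ over an affinoid is not quasi-compact, so $\bar\jmath$ does not exist as an object to which the Bhatt--Hansen $\mathbb{D}$-compatibilities apply. The paper constructs $g$ by a different device: the lax monoidal structure of $\psi_f$, the pairing $\DD\CF\otimes\CF\to\omega_X$, and a map $m:\Psi_ff^!\omega_{\AAA^1}\to f_0^!\Psi_{id}\omega_{\AAA^1}$, with the Tate twist coming from $\Psi_{id}\omega_{\AAA^1}\simeq\omega_{\{0\}}(1)[1]$; it is then shown to be an isomorphism by resolution of singularities, comparison with the algebraic case, and Abhyankar's lemma (\cite{li_logarithmic_2019}) to algebraize the local system near an SNC divisor. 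Likewise, the ``unwind the definitions'' step for (4) is where the bulk of §\ref{sec_perversity_duality} actually lives: the functor $\Upsilon$, the $\beta$ and projection-formula homotopies, and the explicit verification that $\mathrm{cosp}\piwa$ matches $\mathrm{purity}\circ\mathrm{sp}^{-1}$ in $\Lambda^\times$. None of this is automatic naturality. Finally, the reduction from $\FF_\ell$ to $\ZZ/\ell^r$ via the SES $0\to\FF_\ell\to\ZZ/\ell^r\to\ZZ/\ell^{r-1}\to 0$ is not quite the point either: the paper's Lemmas \ref{lem_j_coho_fini}--\ref{lem_D_mod_l} establish the derived $(-)\otimes_{\ZZ/\ell^r}\FF_\ell$ commutes with $j_*$, $\psi_f$ and $\DD$ on tor-finite objects, which is a finite-cohomological-dimension statement rather than a dévissage. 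So while your outline reproduces the right shape of the statements, the geometric engine (transversal Milnor slicing) and the duality mechanism (pro-open $\bar\jmath$) that drive it are not available in this setting, which is precisely why the paper takes the detour through Beilinson's unipotent construction and the lax monoidal pairing.
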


Here $X_0\bar\X B\mu$ is the topos of sheaves on $X_0$ equipped with a $\mu$-action, where $\mu=\hat{\ZZ}(1)$. We refer to the main text for the constructions of the maps $g$, $h$ and $\alpha$.

\begin{remark}
    In the context of schemes, nearby and vanishing cycles are also defined and studied over higher-dimensional bases (see \cite{lu_duality_2019} and references therein). No such formalism exists in analytic contexts yet. Our point of view is that, in contexts without wild ramifications (\textit{e.g.}, complex analytic, and our context), the functor $\mu hom$ as in \cite[\nopp IV.4]{kashiwara_sheaves_1990} is a linearised version of nearby and vanishing cycles “from general sources” and is a sufficient substitute, in the sense that linearisation captures enough information where there is no wild ramifications. This will be elaborated in \cite{zhou_microlocal_2025}.
\end{remark}

\begin{question*}
    (1) For a map $f: X\rightarrow Y$ of rigid analytic varieties and $\CF\in \DX$, we say $(f,\CF)$ is \underline{$\phi$-universally locally acyclic} ($\phi$-ULA) if for every map $D=\Spa(K\langle T\rangle)\rightarrow Y$ from the closed unit disc, one has $\phi_{f_D}(\CF_D)=0$, where $f_D: X_D\rightarrow D$ and $\CF_D\in\Dbzc(X_D)$ are the base changes. How does $\phi$-ULA compare with ULA as defined in \cite[\nopp IV.2.1]{fargues_geometrization_2021}?\\
    (2) Is there a Künneth formula and Thom-Sebastiani theorem (\cite{illusie_around_2017}) in this context?
\end{question*}
 
\section*{Conventions}
We use the six-functor formalism of \etale sheaves on adic spaces constructed in \cite{huber_etale_1996}, enhanced to $\infty$-categories in \cite[\nopp §9]{zavyalov_some_2024}. The derived categories $D(X,\Lambda)$ are in the $\infty$-categorical sense. $\Sh(X,\Lambda)$ denotes the abelian category of sheaves. The coefficient $\Lambda$ is often omitted. All sheaf-theoretic functors are derived. By a “local system” $\CL$ on $X$ we mean an object of $\DX$ such that each $\CH^i(\CL)$ is locally constant with finite type stalks.\\\\
By a “rigid analytic variety” we mean an adic space locally topologically of finite type over $\Spa(K,K^\circ)$. We use the terms “analytic closed subset” and “Zariski-closed subset” interchangeably. For a Huber ring $A$ topologically of finite type over $K$, we abbreviate $\Spa(A,A^\circ)$ as $\Spa(A)$. For $r\in|K^\X|$, $D(r):=\Spa(K\langle \frac{T}{r}\rangle)$ denotes the closed unit disc of radius $r$, $D(1)$ is abbreviated as $D$, punctured discs $D(r)-\{0\}$ are denoted by $D^\X(r)$.\\\\
We use “$\simeq$” to denote a canonical isomorphism, with the morphism clear from the context, and “$\cong$” to denote an isomorphism without naturality or canonicity assertions. We sometimes use “the” instead of “a” when refering to an object determined up to a contractible space of choices (\textit{e.g.} inverses and compositions).
\section*{Acknowledgements}
I am very grateful to Bhargav Bhatt for numerous valuable discussions. I also want to express my sincere thanks to Bogdan Zavyalov for many insightful suggestions, to Hiroki Kato for clarifying discussions on materials related to §\ref{sec_beilinson_gluing}, and to Piotr Achinger, Grigory Andreychev, Yash Deshmukh, Bradley Dirks, David Hansen and Luc Illusie for their interest, conversations and correspondences. This work is done during my stay at the Institute for Advanced Study. I want to thank the Institute for providing a beautiful environment.
\section{Definitions and basic properties}\label{sec_defofvan}

In this section, we define and study basic properties of nearby and vanishing cycles on rigid analytic varieties. The set-up is as in §\ref{sec_intro}, $\Lambda=\FF_{\ell^r}$ or $\ZZ/\ell^r$, with $\ell=p\neq0$ allowed.

\begin{definition}[nearby and vanishing cycles]\label{def_van}
    Let $f: X\rightarrow \AAA^1$ be a map of rigid analytic varieties, $X_0:=X\times_{\AAA^1} 0$, $X^\times:=X\times_{\AAA^1}\Gm$, and $\CF\in D^{}(X^\times)$. The \underline{nearby cycle} $\psi_f(\CF)$ of $\CF$ with respect to $f$ is an object of $D^{}(X_0)$ carrying a continuous action of $\pi^{\mathrm{fin}}_1(\Gm,1)=\hat{\ZZ}(1)=\mu:=\varprojlim\mu_n$ (the \underline{monodromy action})\footnote{Here, $\pi^{\mathrm{fin}}_1$ denotes the fundamental group classifying finite \etale surjective maps, and $\mu_n$ denotes the $n$-th roots of unity in $K$. In the following, we use $\pi^{\mathrm{fin}}_1(\Gm,1)$, $\hat{\ZZ}(1)$ and $\mu$ interchangeably.} defined as follows: consider the diagrams, where the second is the base change to $X$ of the first:
    % https://q.uiver.app/#q=WzAsOCxbMSwxLCJcXG1hdGhiZntBfV4xIl0sWzIsMSwiXFxtYXRoYmZ7R31fbSJdLFsyLDAsIlxcbWF0aGJme0d9X20iXSxbMCwxLCIwIl0sWzQsMSwiWF8wIl0sWzUsMSwiWCJdLFs2LDEsIlhee1xcdGltZXN9Il0sWzYsMCwiWF57XFx0aW1lc31fbiJdLFsxLDAsIiIsMCx7InN0eWxlIjp7InRhaWwiOnsibmFtZSI6Imhvb2siLCJzaWRlIjoiYm90dG9tIn19fV0sWzIsMSwielxcbWFwc3RvIHpebiJdLFsyLDBdLFszLDAsIiIsMix7InN0eWxlIjp7InRhaWwiOnsibmFtZSI6Imhvb2siLCJzaWRlIjoidG9wIn19fV0sWzQsNSwiaSIsMix7InN0eWxlIjp7InRhaWwiOnsibmFtZSI6Imhvb2siLCJzaWRlIjoidG9wIn19fV0sWzYsNSwiaiIsMCx7InN0eWxlIjp7InRhaWwiOnsibmFtZSI6Imhvb2siLCJzaWRlIjoiYm90dG9tIn19fV0sWzcsNiwicF9uIl0sWzcsNSwial9uIiwyXSxbMiwxLCJlX24iLDJdXQ==
\[\begin{tikzcd}
	&& {\mathbf{G}_m} &&&& {X^{\times}_n} \\
	0 & {\mathbf{A}^1} & {\mathbf{G}_m} && {X_0} & X & {X^{\times}}
	\arrow[from=1-3, to=2-2]
	\arrow["{z\mapsto z^n}", from=1-3, to=2-3]
	\arrow["{e_n}"', from=1-3, to=2-3]
	\arrow["{j_n}"', from=1-7, to=2-6]
	\arrow["{p_n}", from=1-7, to=2-7]
	\arrow[hook, from=2-1, to=2-2]
	\arrow[hook', from=2-3, to=2-2]
	\arrow["i"', hook, from=2-5, to=2-6]
	\arrow["j", hook', from=2-7, to=2-6]
\end{tikzcd}\]
    Then $\psi_f(\CF):=\varinjlim_{n\rightarrow\infty} i^*j_*p_{n*}p_n^*\CF$. The $\pi^{\mathrm{fin}}_1(\Gm,1)$-action is induced by the natural actions of $\mathrm{Aut}(e_n)=\mu_n$ on $p_{n*}p_n^*\CF$.\\
    
    For $\CG\in D^{}(X)$, the \underline{vanishing cycle} $\phi_f(\CG)$ is an object of $D^{}(X_0)$ carrying a continuous action of $\pi^{\mathrm{fin}}_1(\Gm,1)$ defined as $\cone(i^*\CG\xrightarrow{\mathrm{sp}}\psi_f(\CG))$. Here $\mathrm{sp}: i^*\CG\rightarrow\psi_f(\CG)$ is induced by the adjunctions $\CG\rightarrow j_{n*}j_n^*\CG$ and called the \underline{specialisation map}. The $\pi^{\mathrm{fin}}_1(\Gm,1)$-action is induced from that on $\psi_f(\CG)$. The canonical map $\psi_f(\CG)\rightarrow\phi_f(\CG)$ is called the \underline{canonical map} and denoted by $\mathrm{can}$, the distinguished triangle $i^*\CG\xrightarrow{\mathrm{sp}}\psi_f(\CG)\xrightarrow{\mathrm{can}}\phi_f(\CG)\rightarrow$ is called the \underline{$\mathrm{can}$ triangle}.
\end{definition}

\begin{remark}
    (1) We warn the reader that, although $\varinjlim i^*j_*p_{n*}p_n^*\simeq i^*j_*\varinjlim p_{n*}p_n^*$ in the algebraic setting, this is not true in our setting, due to the non-quasi-compactness of $X^\X$ for an affinoid $X$. We use the former one because, for example, Lemma \ref{lem_T-1} does not hold for the latter in this generality (consider the example in Remark \ref{rmk_nonZariskiConst_example}).\\
    (2) In the following we often write $\psi_f$ for $f: X\rightarrow D(r)$ a map to a disc of radius $r$. By this we mean $\psi_{uf}$ where $u: D(r)\hookrightarrow\AAA^1$ is the standard open immersion. We will also often view $\psi_f$ as a functor from $D(X)$ to $D(X_0)$ via $\psi_f j^*$.
\end{remark}

\begin{remark}
    (1) Our definition is compatible with those of Berkovich (\cite[\nopp §4]{berkovich_vanishing_1996}) and Ramero (\cite[§2.2]{ramero_hasse-arf_2012}) in the following sense: in \cite[\nopp §4]{berkovich_vanishing_1996}, take $\mathbf{S}$ to be the germ $(\AAA^1,0)$, and $\mathbf{X}\rightarrow\mathbf{S}$ and $F$ to be induced from $f: X\rightarrow\AAA^1$ and $\CF$, then (Berkovich's) $\Psi_\eta(F)\simeq$ (our) $\psi_f(\CF)$; in \cite[§2.2]{ramero_hasse-arf_2012}, if one replaces $\pi_1^{\mathrm{loc.alg}}$ by $\pi^{\mathrm{fin}}_1$, then one gets our definition. \\
    (2) In \cite[§2.2]{ramero_hasse-arf_2012}, $\pi_1^{\mathrm{loc.alg}}$ is used because there the coefficient for sheaves is a ring which equals the union of its finite subrings. In this article we work only with finite coefficients. This significantly simplifies the sheaf theory, essentially because, with finite coefficients, it follows from a result of Ofer Gabber and Werner Lütkebohmert (\cite{lutkebohmert_riemanns_1993}) that sheaves do not have “wild ramifications”\footnote{By which we mean the complexity in genuinely infinite \etale coverings. This is analogous to the complexity in essential singularities of complex analytic functions, and the complexity in $p$-power-degree finite \etale coverings over a field of characteristic $p>0$.}. It would be desirable to study to what extent the following theory extends to infinite coefficients.
\end{remark}

More formally, “an object of $D^{}(X_0)$ carrying a continuous action of $\pi_1^{\mathrm{fin}}$” means the following. Let $B\mu$ be the classifying topos of $\mu$ and $S$ be the \etale topos in \emph{schemes} of $\mathrm{Spec}(K[[x]])$. Consider the product topos $X_0\bar\X S$.\footnote{We follow \cite{lu_duality_2019} and denote the fibre product of topoi by $\bar\X$ to distinguish from that of rigid spaces.} An object in $\Sh(X_0\bar\X S)$ is a triple $(A\xrightarrow{\alpha}B)$, where $A$ is a sheaf on $X_0$ with the trivial action of $\mu$, $B$ is a sheaf on $X_0$ with a continuous action of $\mu$, and $\alpha$ is an equivariant map. $X_0\bar\X S$ has an open subtopos $X_0\bar\X B\mu$ with complementary closed subtopos $X_0$.\\ 

Definition \ref{def_van} in fact defines $\psi$ (resp. $\phi$) as a functor from $D(X^\X)$ (resp. $D(X)$) to $D(X_0\X B\mu)$. In the following, unless otherwise specified, by a morphism (in particular, isomorphism) of nearby and vanishing cycle sheaves we mean a morphism of objects in $D(X_0)$ commuting with the monodromy actions, \textit{i.e.}, as objects of $D(X_0\bar\X B\mu)$.\\

With $X_0\bar\X S$ defined as above, the discussion in \cite[\nopp §2.1$\sim$2.3]{lu_duality_2019} applies in our setting. We will use the materials in §2.1$\sim$2.2 therein.

\begin{definition}[{\cite[\nopp §2.1]{lu_duality_2019}}, Iwasawa twist]\label{def_iwasawa}
    The \underline{Iwasawa algebra} of $\mu$ is the group algebra of $\mu$, defined as $R=\Lambda[[\mu]]:=\varprojlim_n\Lambda[\mu_n]$. Let $t$ be a topological generator of $\mu$. Then $tR$ as an ideal of $R$ is independent of the choice of $t$. It will be denoted by $R(1)^\tau$. For a discrete $\Lambda$-$\mu$-module $M$, its \underline{Iwasawa twists} are $R$-modules defined as $M\piwa=R\piwa\otimes_RM$ and $M\miwa=Hom_R(R\piwa,M)$. The Iwasawa twist is invertible in the sense that $R\piwa\otimes_RR\miwa\simeq R$. We denote by $\iota$ the canonical map $M\rightarrow M\miwa: x\mapsto (\sigma\mapsto\sigma x)$.
\end{definition}

A fixed $t$ induces an isomorphism $M\miwa\isoto M: (\sigma\mapsto f(\sigma))\mapsto f(t-1)$. Note that the composition of this isomorphism with $\iota$ is just $M\rightarrow (M\miwa\isoto) M: x\mapsto (t-1)x$. So one can think of $M\rightarrow M\miwa$ as a kind of monodromy without a choice of $t$.

\begin{lemma}[{\cite[\nopp §2.1]{lu_duality_2019}}]\label{lem_iwasawa_basics}
    (1) The map $M\miwa\rightarrow Z^1_{cont}(\mu,M): (\sigma\mapsto f(\sigma))\mapsto (h\mapsto f(h-1))$ is an isomorphism of $R$-modules, where $Z^1_{cont}(\mu,M)$ denotes continuous crossed homomorphisms.\\
    (2) If $\mu$ acts trivially on $M$, then the map $M(1)\simeq \hat{\ZZ}(1)\otimes_{\hat{\ZZ}}M\rightarrow M\piwa: h\otimes x\mapsto (h-1)\otimes x$ is an isomorphism; if $t-1$ acts invertibly on $M$, then the map $\iota: M\rightarrow M\miwa$ is an isomorphism.\\
    (3) Consider the exact sequence of abelian groups $0\rightarrow P\rightarrow\hat\ZZ(1)\rightarrow\ZZ_\ell(1)\rightarrow0$. We get a decomposition $M\simeq M^P\oplus\!^P\!M$. Here $M^P$ denotes the $P$-invariants of $M$, which is also the image of the projection $\kappa: M\rightarrow M$, $x\mapsto \int_{\sigma\in P}\sigma x d\sigma$ where $d\sigma$ is the Haar measure on $P$; $^P\!M$ denotes the kernel of $\kappa$. We have $M\piwa\simeq M^P\piwa\oplus\!^P\!M$. 
\end{lemma}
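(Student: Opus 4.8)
The plan is to reduce all three parts to standard homological algebra over the completed group ring $R=\Lambda[[\mu]]$ together with the continuous cohomology of the procyclic group $\mu\cong\hat\ZZ(1)$, as in \cite[\nopp §2.1]{lu_duality_2019}. The one thing to settle first is that $R\piwa=tR=(t-1)R$ is the augmentation ideal $I=\ker(R\to\Lambda)$: the set $(t-1)R$ is an ideal contained in $I$; it is the image of the compact ring $R$ under multiplication by $t-1$, hence closed; and it contains $t-1$, which topologically generates the ideal $I$ because $t$ topologically generates $\mu$ — so $(t-1)R=I$, and in particular $R\piwa$ does not depend on $t$.

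For (1): an $R$-linear map $f\colon I\to M$ is the same datum as a continuous crossed homomorphism $c\colon\mu\to M$, via $c(h)=f([h]-1)$ and, conversely, $f\bigl(\sum_h r_h([h]-1)\bigr)=\sum_h r_h\,c(h)$. The cocycle identity $c(gh)=c(g)+g\cdot c(h)$ is forced by $[gh]-1=[g]([h]-1)+([g]-1)$ together with $R$-linearity; well-definedness, and the equivalence of continuity for $c$ and for $f$, follow from $M$ being discrete and $I$ being topologically generated over $R$ by $t-1$. This is the classical identification $\mathrm{Der}(\mu,M)\simeq Hom_R(I,M)$ in continuous form; it is $R$-linear, hence yields the stated isomorphism $M\miwa\isoto Z^1_{cont}(\mu,M)$.

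For (2): if $\mu$ acts trivially, then $M$ is an $R/I=\Lambda$-module, so $M\piwa=R\piwa\otimes_R M=I\otimes_R M=(I/I^2)\otimes_\Lambda M$; the homomorphism $\mu\to I/I^2,\ h\mapsto[h]-1$, induces the standard isomorphism $\mu\otimes_\ZZ\Lambda=\hat\ZZ(1)\otimes_{\hat\ZZ}\Lambda\isoto I/I^2$ (surjective since $I=(t-1)R$; injective by the procyclic computation $I/I^2\cong\ZZ_\ell(1)\otimes\Lambda$, the prime-to-$\ell$ part of $\hat\ZZ$ being killed by $\otimes\Lambda$), and unwinding identifies the composite with $h\otimes x\mapsto(h-1)\otimes x$. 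For the second half, transport $\iota$ through the identification of (1): it becomes the coboundary map $M\to Z^1_{cont}(\mu,M),\ x\mapsto(h\mapsto(h-1)x)$, whose kernel is $M^\mu$ and whose cokernel is $Z^1_{cont}/B^1_{cont}=H^1_{cont}(\mu,M)=M/(t-1)M$ (the last equality because $\mu$ is procyclic). If $t-1$ acts invertibly on $M$, both $M^\mu=\ker(t-1\mid M)$ and $M/(t-1)M$ vanish, so $\iota$ is an isomorphism.

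For (3): since $P$ is pro-(prime-to-$\ell$) and $\Lambda$ is $\ell^r$-torsion, the $P$-action on any discrete $\Lambda$-$\mu$-module $M$ factors through finite quotients of order prime to $\ell$ — units in $\Lambda$ — so integration over $P$ is a $\mu$-equivariant idempotent $\kappa$, giving the functorial splitting $M\simeq M^P\oplus{}^P\!M$ with $M^P=\mathrm{im}\,\kappa$ and ${}^P\!M=\ker\kappa$. The Iwasawa twist is additive, so it suffices to check $({}^P\!M)\piwa\simeq{}^P\!M$; by the second half of (2) applied to $N={}^P\!M$, followed by twisting with $\piwa$ (which is inverse to $\miwa$, Definition \ref{def_iwasawa}), this reduces to showing that $t-1$ acts invertibly on ${}^P\!M$. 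After factoring the relevant action through a finite quotient $\bar\mu_\ell\times\bar\mu'$ ($\bar\mu_\ell$ an $\ell$-group, $\bar\mu'$ cyclic of order $m$ prime to $\ell$) and extending scalars to $\Lambda[\zeta_m]$, the module ${}^P\!M$ is the sum of the nontrivial $\bar\mu'$-isotypic summands, on which the image of $t$ acts as $\zeta u$ with $\zeta$ a nontrivial $m$-th root of unity and $u$ of $\ell$-power order; thus $u-1$ is nilpotent and $[t]-1=(\zeta-1)+\zeta(u-1)$ is a unit, since $1-\zeta$ divides $m\in\Lambda^\times$. Hence $t-1$ is invertible on ${}^P\!M$, completing the proof. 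I expect the only non-formal step to be this last one: the Iwasawa twist is blind to the prime-to-$\ell$ direction of $\mu$, and one must first split off ${}^P\!M$ because $t-1$ is not a global non-zero-divisor of $R$ (it is already invertible on each nontrivial prime-to-$\ell$ cyclotomic factor and only cuts out a rank-one ideal on the trivial one), so no single resolution of $\Lambda$ over $R$ handles all of $M$ at once; the remainder is continuity bookkeeping and standard group-ring homological algebra.
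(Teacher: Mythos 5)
The paper gives no proof of this lemma; it is cited from \cite{lu_duality_2019}, \S 2.1, without argument, so there is no in-paper proof to compare yours against. On its own terms your reconstruction is correct: the identification $R\piwa=I=(t-1)R$, the derivation-versus-$\mathrm{Hom}_R(I,M)$ dictionary for (1), the computation of $\ker\iota=M^\mu$ and $\mathrm{coker}\,\iota=H^1_{\mathrm{cont}}(\mu,M)=M/(t-1)M$ for (2), and the cyclotomic-unit argument showing $t-1$ acts invertibly on ${}^P\!M$ for (3) are all sound, and your reduction of (3) to (2) to (1) is the natural one.

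One claim in your closing paragraph is mistaken and worth correcting, because it quietly runs against the rest of the argument. You assert that $t-1$ is \emph{not} a global non-zero-divisor of $R$. It is. Writing $R\cong\prod_{d\text{ prime to }\ell}\bigl(\Lambda[[\ZZ_\ell]]\otimes_\Lambda \Lambda[X]/\Phi_d(X)\bigr)$, the image of $t-1$ on the $d$-th factor is a unit for $d>1$ (this is precisely the cyclotomic computation you carry out for (3)) and is the non-zero-divisor $T$ on the $d=1$ factor $\Lambda[[\ZZ_\ell]]\cong\Lambda[[T]]$; a componentwise non-zero-divisor in a product ring is a non-zero-divisor. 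Consequently $I=(t-1)R$ is free of rank one over $R$, and $0\to R\xrightarrow{\,t-1\,}R\to\Lambda\to 0$ \emph{is} a single free resolution of $\Lambda$ over $R$, contrary to your parenthetical. This matters a little: the surjectivity direction in your proof of (1), where a cocycle $c$ must give a well-defined $R$-linear $f\colon I\to M$ with $f(t-1)=c(t)$, is cleanest precisely because $I$ is free of rank one. The genuine reason one splits off ${}^P\!M$ in (3) is not that $t-1$ is a zero-divisor but that it is not a \emph{unit}: the isomorphism $M\piwa\cong M$ furnished by a choice of $t$ is non-canonical, and the content of (3) is that on ${}^P\!M$ the canonical map $\iota$ already supplies a canonical inverse to the twist.
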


\begin{remark}\label{rmk_t_non_t_decomposition}
    The Iwasawa twist and the decomposition $M\simeq M^P\oplus\!^P\!M$ are exact, thus easily induce functors on the derived category, and on the topos $X_0\bar\X B\mu$ (\textit{c.f.} §2.1 in $\textit{loc. cit.}$). Lemma \ref{lem_iwasawa_basics} implies analogous statements on $D(X_0\bar\X B\mu)$. In particular, we have canonical decompositions $\psi\simeq \psi^P\oplus\!^P\!\psi$ and $\phi\simeq \phi^P\oplus\!^P\!\phi$.
\end{remark}

\begin{lemma}\label{lem_T-1}
    Let $f: X\rightarrow\AAA^1$ be a map of rigid analytic varieties, and $\CF\in D^{}(X)$. Then, there is a canonical distinguished triangle: $i^*j_*j^*\CF\xrightarrow{\mathrm{sp}}\psi_f(\CF)\xrightarrow{\iota}\psi_f(\CF)\miwa\rightarrow$.
\end{lemma}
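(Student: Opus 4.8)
The plan is to identify $\mathrm{sp}$ with the forgetful map $R\Gamma(\mu,\psi_f\CF)\to\psi_f\CF$ and to recognize the cofibre of the latter as $\psi_f\CF\miwa$; this is the rigid-analytic incarnation of the classical equivalence $i^*j_*j^*\CF\simeq(\psi_f\CF)^{h\mu}$. First I would record the purely formal input on the target side: for any $M\in D(X_0\bar\X B\mu)$, the truncated continuous cochain complex $[M\xrightarrow{d}Z^1_{cont}(\mu,M)]$ computes $R\Gamma(\mu,M)$, since $\mu\simeq\hat\ZZ(1)$ has continuous cohomological dimension $\le1$ for $\Lambda$-coefficients (true even when $\ell=p$); combining this with Lemma \ref{lem_iwasawa_basics}(1), which identifies $Z^1_{cont}(\mu,M)\simeq M\miwa$ and carries $d$ to $\iota$ (and which descends to the ambient topos by Remark \ref{rmk_t_non_t_decomposition}), one gets a canonical equivalence $R\Gamma(\mu,M)\simeq\mathrm{fib}(M\xrightarrow{\iota}M\miwa)$. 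Thus it suffices to prove that $\mathrm{sp}\colon i^*j_*j^*\CF\to\psi_f\CF$ — which is just the structure map into the colimit $\psi_f(\CF)=\varinjlim_n i^*j_*p_{n*}p_n^*j^*\CF$ from its $n=1$ term — exhibits its source as $R\Gamma(\mu,\psi_f\CF)$, compatibly with the maps to $\psi_f\CF$.

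For the computation I would fix a topological generator $t$, so $R\Gamma(\mu,-)=\mathrm{fib}(t-1\mid-)$, which commutes with filtered colimits (being $\mathrm{cofib}(t-1\mid-)[-1]$) and with $i^*$ and $j_*$ (being a finite limit). Hence $R\Gamma(\mu,\psi_f\CF)\simeq\varinjlim_n i^*j_*\,\mathrm{fib}(t-1\mid p_{n*}p_n^*j^*\CF)$, and since $p_{n*}p_n^*j^*\CF\simeq j^*\CF\otimes p_{n*}\Lambda$ by the projection formula with $t$ acting only through the factor $p_{n*}\Lambda$ (as $\mathrm{Aut}(e_n)=\mu_n$), this becomes $\varinjlim_n i^*j_*(j^*\CF\otimes\CK_n)$ with $\CK_n:=\mathrm{fib}(t-1\mid p_{n*}\Lambda)$. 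Next I would unwind Definition \ref{def_van} to see that the transition maps $p_{n*}\Lambda\to p_{nm*}\Lambda$ are the transfer (``norm'') maps $\zeta\mapsto\sum_{\xi\mapsto\zeta}\xi$ — they are $p_{n*}$ of the unit $\mathrm{id}\to q_*q^*$ for the residual $m$-fold cover $q$ — and compute, using that $p_{n*}\Lambda=\underline{\Lambda[\mu_n]}$ is the regular representation with $t$ acting by translation, that $\CK_n$ has $\CH^0=\underline{\Lambda\cdot N_n}\cong\underline{\Lambda}$ (where $N_n:=\sum_{\zeta\in\mu_n}\zeta$) and $\CH^1=\underline{\Lambda[\mu_n]/(t-1)}\cong\underline{\Lambda}$ (via the augmentation), with the transfer inducing an isomorphism on $\CH^0$ ($N_n\mapsto N_{nm}$) and multiplication by $m$ on $\CH^1$.

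Finally I would observe that the inclusions $\underline{\Lambda}\cong\CH^0(\CK_n)=\ker(t-1\mid p_{n*}\Lambda)\hookrightarrow\CK_n$ give a map of ind-objects from the constant ind-object $\{j^*\CF\otimes\underline{\Lambda}\}$ (with identity transitions) to $\{j^*\CF\otimes\CK_n\}$, whose cofibre ind-object is $\{j^*\CF\otimes\underline{\Lambda}[-1]\}$ with transitions equal to multiplication by $m$. Since every sheaf in sight is $\ell^r$-torsion and $\ell\mid m$ cofinally, a cofinal family of composite transitions is multiplication by $\ell^r=0$, so that cofibre ind-object has vanishing colimit even after applying $i^*j_*$. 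Consequently $\varinjlim_n i^*j_*(j^*\CF\otimes\underline{\Lambda})\xrightarrow{\sim}\varinjlim_n i^*j_*(j^*\CF\otimes\CK_n)$, and the source is $i^*j_*j^*\CF$; inspecting the $n=1$ component and post-composing with the forgetful map $R\Gamma(\mu,\psi_f\CF)\to\psi_f\CF$ (induced by $\CK_n\to p_{n*}\Lambda$) recovers $\mathrm{sp}$. Together with the first paragraph this yields the triangle.

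The hard part will be bookkeeping rather than anything deep: one must keep the colimit strictly outside $i^*j_*$ everywhere — recall $\psi_f$ does not commute with $i^*j_*$ in general (the Remark after Definition \ref{def_van}) — and invoke its commutation with $i^*j_*$ only for the single ind-object above, whose transition maps are eventually nilpotent. The remaining points (the computation of $\CK_n$ and its transition maps, and carrying the whole argument, together with the identification of $\mathrm{sp}$ and of $\iota$ with $d$, $\mu$-equivariantly in $D(X_0\bar\X B\mu)$) are routine but account for most of the write-up.
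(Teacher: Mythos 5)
Your proposal is correct, but it takes a genuinely different route through the central step. Both arguments reduce to the identification $i^*j_*j^*\CF\simeq R\Gamma(\mu,\psi_f\CF)$ together with the cofibre sequence $R\Gamma(\mu,-)\to\mathrm{id}\xrightarrow{\iota}(-)\miwa$; the paper cites \cite[Lemma 2.9]{lu_duality_2019} for the latter, while you re-derive it from $\mathrm{cd}(\hat\ZZ)\le1$ and Lemma \ref{lem_iwasawa_basics}.(1). The real divergence is in proving the identification. The paper works level by level: Galois descent for $p_n$ gives $j^*\CF\isoto R\Gamma(\mu_n,p_{n*}p_n^*j^*\CF)$ on the nose; it then commutes $i^*j_*$ past $R\Gamma(\mu_n,-)$ using coherent base change (\cite[1.17, 1.18, 2.11]{lu_duality_2019}), and finally passes to the colimit citing \cite[VI.8.7.3]{SGA4}. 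You bypass all three citations by observing that, after fixing $t$, $R\Gamma(\mu,-)=\mathrm{fib}(t-1\mid-)$ is a shifted cofibre, hence commutes with $i^*$, $j_*$ and filtered colimits for purely formal reasons; in exchange, at each finite level you obtain $j^*\CF\otimes\CK_n$ (with the extra $\CH^1\cong\underline\Lambda$) rather than $j^*\CF$ itself, and the surplus must be killed in the colimit. Your computation of $\CK_n$ and of the transfer maps (isomorphism on $\CH^0$, multiplication by $m$ on $\CH^1$) combined with $\ell^r$-torsion of $\Lambda$ is precisely what makes this work; a pleasant byproduct is that it makes the role of the finiteness of the coefficient ring completely explicit, where the paper leaves it buried in the citations. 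One point you should spell out in a write-up is the vanishing step: rather than asserting that ``a cofinal family of composite transitions is multiplication by $\ell^r=0$'', pass to a cofinal $\omega$-chain on the index poset $(\NN,\mid)$, say $\{(j\ell^r)!\}_{j\ge1}$, on which every consecutive transition map is literally zero; the colimit along such a chain vanishes, and therefore so does the original filtered colimit.
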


\begin{proof}
    Denote by $R\Gamma(\mu,-)$ the direct image map: $D^{}(X_0\bar\X B\mu)\rightarrow D^{}(X_0)$. By \cite[Lemma 2.9]{lu_duality_2019}, we have a distinguished triangle $R\Gamma(\mu,\psi(\CF))\rightarrow \psi(\CF)\xrightarrow{\iota}\psi(\CF)\miwa$, where we have omitted the $p^*$ in \textit{loc. cit.} in our notation, and the first map is the adjunction $p^*R\Gamma(\mu,-)\rightarrow id$.\\\\
    For each $n$, we have $j^*\CF\isoto R\Gamma(\mu_n, p_{n*}p_n^*j^*\CF)$ where the map is induced by the adjunction $\mathrm{id}\rightarrow p_{n*}p_n^*$, and $R\Gamma(\mu_n,(-))$ is the direct image map $D^{}(X^\X\bar\X B\mu_n)\rightarrow D^{}(X^\X)$. Consider the following diagram 
    % https://q.uiver.app/#q=WzAsNixbMSwwLCJYXFx0aW1lcyBCXFxtdV9uIl0sWzEsMSwiWCJdLFswLDEsIlhfMCJdLFswLDAsIlhfMFxcdGltZXMgQlxcbXVfbiJdLFsyLDAsIlheXFx0aW1lc1xcdGltZXMgQlxcbXVfbiJdLFsyLDEsIlheXFx0aW1lcyJdLFszLDJdLFswLDFdLFs0LDVdLFsyLDEsImkiLDIseyJzdHlsZSI6eyJ0YWlsIjp7Im5hbWUiOiJob29rIiwic2lkZSI6InRvcCJ9fX1dLFszLDAsIiIsMSx7InN0eWxlIjp7InRhaWwiOnsibmFtZSI6Imhvb2siLCJzaWRlIjoidG9wIn19fV0sWzQsMCwiIiwxLHsic3R5bGUiOnsidGFpbCI6eyJuYW1lIjoiaG9vayIsInNpZGUiOiJib3R0b20ifX19XSxbNSwxLCJqIiwwLHsic3R5bGUiOnsidGFpbCI6eyJuYW1lIjoiaG9vayIsInNpZGUiOiJib3R0b20ifX19XSxbMywxLCIiLDEseyJzdHlsZSI6eyJuYW1lIjoiY29ybmVyIn19XSxbNCwxLCIiLDEseyJzdHlsZSI6eyJuYW1lIjoiY29ybmVyIn19XV0=
\[\begin{tikzcd}
	{X_0\bar\times B\mu_n} & {X\bar\times B\mu_n} & {X^\times\bar\times B\mu_n} \\
	{X_0} & X & {X^\times}
	\arrow[hook, from=1-1, to=1-2]
	\arrow[from=1-1, to=2-1]
	\arrow["\lrcorner"{anchor=center, pos=0.125}, draw=none, from=1-1, to=2-2]
	\arrow[from=1-2, to=2-2]
	\arrow[hook', from=1-3, to=1-2]
	\arrow["\lrcorner"{anchor=center, pos=0.125, rotate=-90}, draw=none, from=1-3, to=2-2]
	\arrow[from=1-3, to=2-3]
	\arrow["i"', hook, from=2-1, to=2-2]
	\arrow["j", hook', from=2-3, to=2-2]
\end{tikzcd}\]
We have $j_*j^*\CF\isoto R\Gamma(\mu_n, j_*p_{n*}p_n^*j^*\CF)$. Apply coherent base change (locally on $X$) (\cite[\nopp 1.17, 1.18, 2.11]{lu_duality_2019}) to the left square, we get $i^*j_*j^*\CF\isoto R\Gamma(\mu_n, i^*j_*p_{n*}p_n^*j^*\CF)$. Pass to the limit (\cite[\nopp VI.8.7.3]{SGA4}, note the hypotheses are satisfied by the finiteness of $\mu_n$), we get $i^*j_*j^*\CF\isoto \varinjlim R\Gamma(\mu_n, i^*j_*p_{n*}p_n^*j^*\CF)\simeq R\Gamma(\mu, \varinjlim i^*j_*p_{n*}p_n^*j^*\CF)=R\Gamma(\mu, \psi_f(\CF))$. Combined this with the first paragraph, we get the desired distinguished triangle.
\end{proof}

\begin{definition}\label{def_sp_can_var}
    Let $f: X\rightarrow\AAA^1$ be a map of rigid analytic varieties, and $\CF\in D^{}(X)$. Consider the following diagram:
    % https://q.uiver.app/#q=WzAsMTIsWzAsMSwiXFxQc2lfZihcXG1hdGhjYWx7Rn0pIl0sWzEsMSwiXFxQaGlfZihcXG1hdGhjYWx7Rn0pIl0sWzIsMSwiaV4qXFxtYXRoY2Fse0Z9Il0sWzAsMiwiXFxQc2lfZihcXG1hdGhjYWx7Rn0pIl0sWzEsMiwiXFxQc2lfZihcXG1hdGhjYWx7Rn0pIl0sWzIsMiwiMCJdLFswLDAsImleKmpfKmpeKlxcbWF0aGNhbHtGfVstMV0iXSxbMSwwLCJpXiFcXG1hdGhjYWx7Rn0iXSxbMiwwLCJpXipcXG1hdGhjYWx7Rn0iXSxbMywwLCJpXipqXypqXipcXG1hdGhjYWx7Rn0iXSxbMywxLCJcXHBzaV9mKFxcbWF0aGNhbHtGfSkiXSxbMywyLCJcXHBzaV9mKFxcbWF0aGNhbHtGfSkiXSxbMCwxLCJjYW4iXSxbMSwyXSxbMCwzLCJULTEiLDJdLFszLDQsIj0iXSxbNCw1XSxbMSw0LCJWYXIiXSxbMiw1XSxbNiwwXSxbNiw3XSxbNyw4XSxbNywxXSxbOCwyLCI9Il0sWzgsOV0sWzUsMTFdLFsyLDEwLCJzcCJdLFs5LDEwXSxbMTAsMTEsIlQtMSJdXQ==
    % https://q.uiver.app/#q=WzAsOSxbMCwxLCJpXipcXG1hdGhjYWx7Rn0iXSxbMCwyLCIwIl0sWzAsMCwiaV4qXFxtYXRoY2Fse0Z9Il0sWzEsMCwiaV4qal8qal4qXFxtYXRoY2Fse0Z9Il0sWzEsMSwiXFxwc2lfZihcXG1hdGhjYWx7Rn0pIl0sWzEsMiwiXFxwc2lfZihcXG1hdGhjYWx7Rn0pKC0xKV5cXHRhdSJdLFsyLDEsIlxccGhpX2YoXFxtYXRoY2Fse0Z9KSJdLFsyLDAsImleIVxcbWF0aGNhbHtGfVsxXSJdLFsyLDIsIlxccHNpX2YoXFxtYXRoY2Fse0Z9KSgtMSleXFx0YXUiXSxbMCwxXSxbMiwwLCI9Il0sWzIsM10sWzEsNV0sWzAsNCwic3AiXSxbMyw0LCJzcCJdLFs0LDUsIlxcaW90YSJdLFs0LDYsImNhbiJdLFszLDddLFs1LDgsIj0iXSxbNiw4LCJ2YXIiXSxbNyw2XV0=
\[\begin{tikzcd}
	{i^*\mathcal{F}} & {i^*j_*j^*\mathcal{F}} & {i^!\mathcal{F}[1]} \\
	{i^*\mathcal{F}} & {\psi_f(\mathcal{F})} & {\phi_f(\mathcal{F})} \\
	0 & {\psi_f(\mathcal{F})(-1)^\tau} & {\psi_f(\mathcal{F})(-1)^\tau}
	\arrow[from=1-1, to=1-2]
	\arrow["{=}", from=1-1, to=2-1]
	\arrow[from=1-2, to=1-3]
	\arrow["\mathrm{sp}", from=1-2, to=2-2]
	\arrow[from=1-3, to=2-3]
	\arrow["\mathrm{sp}", from=2-1, to=2-2]
	\arrow[from=2-1, to=3-1]
	\arrow["\mathrm{can}", from=2-2, to=2-3]
	\arrow["\iota", from=2-2, to=3-2]
	\arrow["\mathrm{var}", from=2-3, to=3-3]
	\arrow[from=3-1, to=3-2]
	\arrow["{=}", from=3-2, to=3-3]
\end{tikzcd}\]
Here, each row and column is a distinguished triangle, the first row is $i^*$ applied to the recollement $i_*i^!\CF\rightarrow\CF\rightarrow j_*j^*\CF\rightarrow$ (shifted), and the second column is from Lemma \ref{lem_T-1}. More precisely, we start from the top left commutative square, where the homotopy is from the composition of adjunctions $i^*\rightarrow i^*j_*j^*\rightarrow \psi_f(=\varinjlim i^*j_*p_{n*}p^*_nj^*)$. The whole digram is then determined by taking cones.\\

We will call the third column $\phi_f(\CF)\xrightarrow{\mathrm{var}}\psi_f(\CF)\miwa\xrightarrow{\mathrm{cosp}} i^!\CF[2]\rightarrow$ the \underline{$\mathrm{var}$ triangle}, the first map the \underline{variation map}, and the second map the \underline{cospecialisation} map.
\end{definition}

\begin{remark}
    Fix a topological generator $t$ of $\mu=\hat\ZZ(1)$, we get an isomorphism $\psi\miwa\simeq\psi$ (discussion after Definition \ref{def_iwasawa}). The $\mathrm{var}$ triangle and the distinguished triangle in Lemma \ref{lem_T-1} then take the forms $\phi_f(\CF)\xrightarrow{\mathrm{var}(t)}\psi_f(\CF)\xrightarrow{\mathrm{cosp}}i^!\CF[2]\rightarrow$ and $i^*j_*j^*\CF\xrightarrow{\mathrm{sp}} \psi_f(\CF)\xrightarrow{t-1}\psi_f(\CF)\rightarrow$. 
\end{remark}

We now prove some basic properties of nearby and vanishing cycles.

\begin{lemma}\label{lem_psietalebasechange}
    Let $f: X\rightarrow\AAA^1$ be a map of rigid analytic varieties, and $\CF\in D^{}(X)$. Then, for every $\lambda\in K^\times$, we have a natural isomorphism $\psi_f(\CF)\isoto\psi_{\lambda f}(\CF)$. This isomorphism is canonical up to a choice of $\bar{\lambda}=\{\lambda_n\}_{n\in\ZZ_{\geq1}}\in\varprojlim\mu_n(\lambda)$ with $\lambda_1=\lambda$, where $\mu_n(\lambda)$ denotes the $n$-th roots of $\lambda$ in $K$. Similarly for $\phi$.
\end{lemma}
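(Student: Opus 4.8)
The plan is to construct the isomorphism $\psi_f(\CF)\isoto\psi_{\lambda f}(\CF)$ by exhibiting, for each $n$, an isomorphism between the $n$-th terms of the two defining colimits, compatible with the transition maps. The point is that multiplication by $\lambda$ is an automorphism of $\AAA^1$ fixing $0$ and carrying $\Gm$ to $\Gm$, so it identifies the geometric situation for $f$ with that for $\lambda f$, \emph{except} that the two $n$-fold covers $e_n: \Gm\to\Gm$ (for $f$) and $e_n: \Gm\to\Gm$ (for $\lambda f$) differ: to lift $m_\lambda: \Gm\to\Gm$ through the covers one must choose an $n$-th root $\lambda_n$ of $\lambda$, giving the square
\[\begin{tikzcd}
	{\Gm} & {\Gm} \\
	{\Gm} & {\Gm}
	\arrow["{z\mapsto\lambda_n z}", from=1-1, to=1-2]
	\arrow["{z\mapsto z^n}"', from=1-1, to=2-1]
	\arrow["{z\mapsto z^n}", from=1-2, to=2-2]
	\arrow["{z\mapsto\lambda z}"', from=2-1, to=2-2]
\end{tikzcd}\]
which commutes precisely when $\lambda_n^n=\lambda$. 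Base-changing this to $X$ and pushing/pulling gives, for each compatible choice, an isomorphism $i^*j_*p_{n*}p_n^*\CF\isoto i^*(j')_*(p'_n)_*(p'_n)^*\CF$ (where primed maps are those for $\lambda f$); here one uses that $m_\lambda$ is an isomorphism so that pullback/pushforward along it are inverse equivalences.

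\medskip
\noindent\textbf{Steps.} First I would spell out the commutative diagram above and its base change to $X$, identifying $X^\times$ for $f$ with $X^\times$ for $\lambda f$ via $\mathrm{id}_X$ (both are $X\times_{\AAA^1,f}\Gm$ versus $X\times_{\AAA^1,\lambda f}\Gm$, canonically identified by $m_\lambda$), and $X_0$ for $f$ with $X_0$ for $\lambda f$ similarly. Second, for a choice of $n$-th root $\lambda_n$, I would produce the isomorphism $i^*j_*p_{n*}p_n^*\CF\isoto i^*(j')_*(p'_n)_*(p'_n)^*\CF$ from the isomorphism $m_{\lambda_n}: X^\times_n\isoto (X^\times_n)'$ of the $n$-th covers together with the base-change compatibilities, and check it is equivariant for the $\mu_n$-actions (the actions on both sides are by $\mathrm{Aut}(e_n)=\mu_n$, and $m_{\lambda_n}$ intertwines them since it is a morphism of $\mu_n$-torsors up to the automorphism induced by $\lambda_n$ — in fact $m_{\lambda_n}$ commutes with the deck transformation $z\mapsto\zeta z$). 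Third, I would check that for a compatible system $\bar\lambda=\{\lambda_n\}\in\varprojlim\mu_n(\lambda)$ — i.e.\ $\lambda_{nm}^m=\lambda_n$ — these isomorphisms are compatible with the transition maps $p_{n*}p_n^*\to p_{nm*}p_{nm}^*$ in the colimit, so they glue to an isomorphism $\psi_f(\CF)=\varinjlim_n i^*j_*p_{n*}p_n^*\CF\isoto\varinjlim_n i^*(j')_*(p'_n)_*(p'_n)^*\CF=\psi_{\lambda f}(\CF)$, commuting with the $\mu=\varprojlim\mu_n$-monodromy actions. Finally, the statement for $\phi$ follows formally since $\mathrm{sp}$ is compatible with these identifications (the adjunctions $\CF\to j_{n*}j_n^*\CF$ correspond under $m_{\lambda_n}$), so the cones agree.

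\medskip
\noindent\textbf{Main obstacle.} The substantive points are bookkeeping ones: (i) verifying that a \emph{single} $\lambda_n$ suffices to define the $n$-th level map but that compatibility across levels forces the full tower $\bar\lambda\in\varprojlim\mu_n(\lambda)$, which is exactly the source of the stated non-canonicity; and (ii) checking $\mu_n$-equivariance carefully, since the naive map $m_{\lambda_n}$ is equivariant for the deck group but one must confirm it does not introduce a twist by $\lambda_n$ on the monodromy (it does not, because the deck transformation of $e_n: \Gm\to\Gm$, $z\mapsto z^n$, is $z\mapsto\zeta_n z$, and $m_{\lambda_n}$ commutes with this). I expect no serious difficulty beyond organizing the coherent base-change isomorphisms of \cite[1.17, 1.18]{lu_duality_2019} and the colimit compatibility, exactly as in Lemma \ref{lem_T-1}; the non-canonicity is a feature, not a bug, and tracking it is the only delicate part.
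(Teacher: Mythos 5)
Your proof is correct and follows essentially the same strategy as the paper: for each $n$, a choice of $n$-th root $\lambda_n$ of $\lambda$ gives an isomorphism between the $n$-th Kummer covers for $f$ and $\lambda f$ (over the common $X^\times$), these glue over the colimit precisely because $\bar\lambda$ is a compatible system, and $\mu_n$-equivariance holds because $m_{\lambda_n}$ commutes with the deck transformations $z\mapsto\zeta z$. The paper phrases the Kummer-cover identification slightly differently (via pulling back $e_n$ along $\theta_{\lambda^{-1}}$ and explicitly writing the map $\alpha_n$) and justifies monodromy compatibility by noting that the composite change-of-basepoint isomorphism of $\pi_1^{\mathrm{fin}}(\Gm,1)$ is the identity since the group is abelian, but your direct check that $m_{\lambda_n}$ intertwines the deck actions amounts to the same thing.
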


\begin{proof}
    Fix $\bar{\lambda}$. Consider $\theta_{\lambda^{-1}}: \Gm\rightarrow\Gm, z\mapsto\lambda^{-1} z$. The $n$-th Kummer covering on the target base-changes to a Kummer covering $U_n\rightarrow \Gm$ of degree $n$. It is isomorphic to the standard covering $\Gm(n)=\Gm\rightarrow \Gm, z\mapsto z^n$ via the map $\alpha_n: U_n\isoto \Gm(n): (z,\mu) \text{ (with $\mu^n=\frac{z}{\lambda}$)}\mapsto\mu\lambda_n$. As $\{\lambda_n\}$ is a projective system, $\{\alpha_n\}$ also. The base-change isomorphisms on each level $n$ then induce $\psi_f(\CF)\isoto\psi_{\lambda f}(\CF)$. It is compatible with the monodromies because, by construction, it is compatible when $\pi^{\mathrm{fin}}_1(\Gm, 1)$ acts on $\psi_{\lambda f}(\CF)$ after composing with the isomorphisms $\pi^{\mathrm{fin}}_1(\Gm, 1)\isoto \pi^{\mathrm{fin}}_1(\Gm, \lambda^{-1})\isoto\pi^{\mathrm{fin}}_1(\Gm, 1)$, where the first map is induced by $\bar{\lambda}$ and the second by $\theta_{\lambda^{-1}}$, but this composition is in fact identity as $\pi^{\mathrm{fin}}_1(\Gm, 1)$ is abelian.
\end{proof}

\begin{remark}
    One may think of $\bar\lambda$ as a path from $1$ to $\lambda$ on $\Gm$, and the isomorphism $\psi_f(\CF)\isoto\psi_{\lambda f}(\CF)$ is induced by transporting the nearby fibre $f^{-1}(1)$ to $f^{-1}(\lambda^{-1})$ along the path $-\bar{\lambda}$ from $1$ to $\lambda^{-1}$.
\end{remark}

\begin{lemma}\label{lem_smoothqcqs}
    (1) Assume $\ell\neq p$. Let $g: Y\rightarrow X$, $f: X\rightarrow\AAA^1$ be maps of rigid analytic varieties with $g$ smooth, and $\CF\in D^{}(X)$. Then there is a canonical isomorphism $g^*\psi_f(\CF)\isoto\psi_{fg}(g^*\CF)$. Similarly for $\phi$.\\
    (2) Let $g: Y\rightarrow X$, $f: X\rightarrow\AAA^1$ be maps of rigid analytic varieties with $g$ quasi-compact quasi-separated, and $\CG\in D^{}(Y)$. Then there is a canonical isomorphism $g_*\psi_{fg}(\CG)\isoot\psi_{f}(g_*\CG)$. Similarly for $\phi$.
\end{lemma}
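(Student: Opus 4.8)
The plan is to deduce both statements from Definition \ref{def_van} by chaining standard base change isomorphisms, carried out level by level in the Kummer tower and then pushed through the colimit over $n$. First I set up notation: write $Y_0 := g^{-1}(X_0)$ and $Y^\X := g^{-1}(X^\X)$, let $g_0 \colon Y_0 \to X_0$ and $g^\X \colon Y^\X \to X^\X$ be the restrictions of $g$, let $i_Y \colon Y_0 \hookrightarrow Y$ and $j_Y \colon Y^\X \hookrightarrow Y$ be the evident immersions, and for each $n$ form $q_n \colon Y^\X_n := Y^\X \X_{X^\X} X^\X_n \to Y^\X$ together with $g_n \colon Y^\X_n \to X^\X_n$. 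All the squares $(i,g)$, $(j,g)$, $(p_n, g^\X)$ and their further pullbacks are cartesian, and the $\mathrm{Aut}(e_n) = \mu_n$-action on $X^\X_n$ lifts along $g_n$ to $Y^\X_n$; hence every base change map below is automatically equivariant for the monodromy, and after passing to colimits the isomorphisms produced live in $D(Y_0 \bar\X B\mu)$, resp.\ $D(X_0 \bar\X B\mu)$. Recall $\psi_f = \varinjlim_n i^* j_* p_{n*} p_n^* j^*$ and $\phi_f = \cone(i^* \xrightarrow{\mathrm{sp}} \psi_f)$, with $\mathrm{sp}$ assembled from the adjunction units $\mathrm{id} \to j_* j^*$ and $\mathrm{id} \to p_{n*} p_n^*$.

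For (1), since $g^*$ is a left adjoint it commutes with $\varinjlim_n$, so it is enough to produce, naturally in $n$, an isomorphism $g_0^*\, i^* j_* p_{n*} p_n^* j^* \CF \isoto i_Y^* j_{Y*} q_{n*} q_n^* j_Y^* g^* \CF$. This is the composite of the identities $g_0^* i^* = i_Y^* g^*$, $g_n^* p_n^* = q_n^* (g^\X)^*$, $(g^\X)^* j^* = j_Y^* g^*$ coming from cartesianness; of $g^* j_* \isoto j_{Y*} (g^\X)^*$, which is smooth base change — and here, and only here, the hypothesis $\ell \neq p$ enters; and of $(g^\X)^* p_{n*} \isoto q_{n*} g_n^*$, which is proper base change for the finite map $p_n$. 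Taking colimits gives $g^* \psi_f(\CF) \isoto \psi_{fg}(g^* \CF)$. For $\phi$, one uses that $g_0^*$ is exact and that, through these same base change maps, it intertwines the units $\mathrm{id} \to j_* j^*$, $\mathrm{id} \to p_{n*} p_n^*$ with their counterparts over $Y$, so it carries $\mathrm{sp}$ to $\mathrm{sp}$, whence $g_0^* \phi_f(\CF) \isoto \phi_{fg}(g^* \CF)$.

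For (2), the natural map, assembled from base change maps, runs $\psi_f(g_* \CG) \to g_{0*} \psi_{fg}(\CG)$, and I must show it is an isomorphism. Level by level, $i^* j_* p_{n*} p_n^* j^* g_* \CG$ is identified with $g_{0*}\, i_Y^* j_{Y*} q_{n*} q_n^* j_Y^* \CG$ by composing: $j^* g_* \isoto (g^\X)_* j_Y^*$ (restriction to the open $X^\X$ commutes with pushforward, for any $g$); $p_n^* (g^\X)_* \isoto g_{n*} q_n^*$ (\etale base change, since $p_n$ is \etale); the functorialities $p_{n*} g_{n*} = (g^\X)_* q_{n*}$ and $j_* (g^\X)_* = g_* j_{Y*}$; and finally $i^* g_* \isoto g_{0*} i_Y^*$, the base change isomorphism along the Zariski-closed immersion $i$ for the quasi-compact quasi-separated morphism $g$. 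Now take $\varinjlim_n$: the left-hand side is $\psi_f(g_* \CG)$, and, using that $g_{0*}$ commutes with filtered colimits of \etale sheaves ($g_0$ being quasi-compact quasi-separated and all spaces locally Noetherian), the right-hand side is $g_{0*} \varinjlim_n i_Y^* j_{Y*} q_{n*} q_n^* j_Y^* \CG = g_{0*} \psi_{fg}(\CG)$. The case of $\phi$ goes as in (1), using in addition $i^* g_* \CG \isoto g_{0*} i_Y^* \CG$ to match the source of $\mathrm{sp}$, and the exactness of $g_{0*}$.

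Nearly everything here is a diagram chase, the content residing in the base change theorems. In (1) the one non-formal ingredient is smooth base change, which is exactly why the restriction $\ell \neq p$ is imposed there. In (2) there are two, both consequences of the quasi-compact quasi-separated hypothesis and the steps I would verify most carefully: the base change $i^* g_* \isoto g_{0*} i_Y^*$ along the closed immersion $i$ — which \emph{fails} for general schemes, so one genuinely relies on quasi-compact quasi-separated morphisms of rigid-analytic varieties being far more restrictive than their scheme-theoretic counterparts (the relevant statement should be extracted from \cite{huber_etale_1996} or \cite{zavyalov_some_2024}) — and the commutation of $g_{0*}$ with the filtered colimit over $n$.
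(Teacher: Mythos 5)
Your proof is correct and is essentially the paper's argument, carried out in full: the paper's proof consists of the cartesian cube, a list of the three ingredients (smooth base change, quasi-compact base change from Huber, and commutation of $g_*$ with filtered colimits for $g$ qcqs from Zavyalov), and the phrase ``We omit the details''; you have supplied exactly those details and correctly pinpointed where each ingredient enters, including the genuinely non-formal step $i^*g_*\isoto g_{0*}i_Y^*$ in (2).
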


\begin{proof}
    This follows from standard diagram chase in the following diagram, using smooth base change, quasi-compact base change (\cite[\nopp 0.3.1, 4.1.1]{huber_etale_1996}), and the fact that $g_*$ commutes with colimits for quasi-compact quasi-separated $g$ (\cite[\nopp 9.1.2]{zavyalov_some_2024}). We omit the details.
    % https://q.uiver.app/#q=WzAsOSxbMSwyLCJcXG1hdGhiZntBfV4xIl0sWzEsMSwiWCJdLFsxLDAsIlkiXSxbMiwwLCJZXlxcdGltZXNfbiJdLFsyLDEsIlheXFx0aW1lc19uIl0sWzIsMiwiXFxtYXRoYmZ7R31fbSJdLFswLDIsIjAiXSxbMCwxLCJYXzAiXSxbMCwwLCJZXzAiXSxbMiwxLCJnIl0sWzEsMCwiZiJdLFszLDJdLFs1LDAsImVfbiIsMl0sWzQsNV0sWzQsMV0sWzYsMCwiIiwwLHsic3R5bGUiOnsidGFpbCI6eyJuYW1lIjoiaG9vayIsInNpZGUiOiJ0b3AifX19XSxbNyw2XSxbNywxLCIiLDAseyJzdHlsZSI6eyJ0YWlsIjp7Im5hbWUiOiJob29rIiwic2lkZSI6InRvcCJ9fX1dLFs4LDddLFs4LDIsIiIsMCx7InN0eWxlIjp7InRhaWwiOnsibmFtZSI6Imhvb2siLCJzaWRlIjoidG9wIn19fV0sWzgsMSwiIiwxLHsic3R5bGUiOnsibmFtZSI6ImNvcm5lciJ9fV0sWzcsMCwiIiwxLHsic3R5bGUiOnsibmFtZSI6ImNvcm5lciJ9fV0sWzQsMCwiIiwxLHsic3R5bGUiOnsibmFtZSI6ImNvcm5lciJ9fV0sWzMsMSwiIiwxLHsic3R5bGUiOnsibmFtZSI6ImNvcm5lciJ9fV0sWzMsNF1d
\[\begin{tikzcd}
	{Y_0} & Y & {Y^\times_n} \\
	{X_0} & X & {X^\times_n} \\
	0 & {\mathbf{A}^1} & {\mathbf{G}_m}
	\arrow[hook, from=1-1, to=1-2]
	\arrow[from=1-1, to=2-1]
	\arrow["\lrcorner"{anchor=center, pos=0.125}, draw=none, from=1-1, to=2-2]
	\arrow["g", from=1-2, to=2-2]
	\arrow[from=1-3, to=1-2]
	\arrow["\lrcorner"{anchor=center, pos=0.125, rotate=-90}, draw=none, from=1-3, to=2-2]
	\arrow[from=1-3, to=2-3]
	\arrow[hook, from=2-1, to=2-2]
	\arrow[from=2-1, to=3-1]
	\arrow["\lrcorner"{anchor=center, pos=0.125}, draw=none, from=2-1, to=3-2]
	\arrow["f", from=2-2, to=3-2]
	\arrow[from=2-3, to=2-2]
	\arrow["\lrcorner"{anchor=center, pos=0.125, rotate=-90}, draw=none, from=2-3, to=3-2]
	\arrow[from=2-3, to=3-3]
	\arrow[hook, from=3-1, to=3-2]
	\arrow["{e_n}"', from=3-3, to=3-2]
\end{tikzcd}\]
\end{proof}

\begin{lemma}\label{lem_comparisonphi}
    Let $f: \CX\rightarrow\AAA^1$ be a map of finite type schemes over $K$, and $\CF\in D^b_c(\CX^\X)$. Let $f^{an}: X\rightarrow\AAA^1$ and $\CF^{an}$ be their analytifications. Then there is a canonical isomorphism $(\psi_f(\CF))^{an}\isoto\psi_{f^{an}}(\CF^{an})$. Similarly for $\phi$.
\end{lemma}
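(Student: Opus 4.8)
The plan is to construct a natural comparison morphism term by term in the colimit defining $\psi$, reduce its invertibility to a single ``level-one'' statement, and prove that. Write $\pi\colon X\to\CX$, $\pi^\X\colon X^\X\to\CX^\X$, $\pi_0\colon X_0\to\CX_0$ and $\pi_n\colon X^\X_n\to\CX^\X_n$ for the analytification maps, so that $(-)^{an}=\pi^*$ throughout. Analytification commutes tautologically with $(-)^*$; it commutes with all colimits and with $\cone(-)$, being the pullback along a morphism of \etale topoi, hence exact and a left adjoint; it commutes with $i^*$, since the relevant square of topoi commutes; and---this is what the finite covers $p_n$ need---it commutes with $p_{n*}$, by the comparison theorem for pushforward along proper morphisms \cite{huber_etale_1996}. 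Composing $\pi_0^*i^*\simeq i^*\pi^*$, the unit $\pi^*j_*\to j_*\pi^{\X*}$ (with $\pi^{\X*}j^*\simeq j^*\pi^*$), the isomorphism $\pi^*p_{n*}\isoto p_{n*}\pi_n^*$, and $\pi_n^*p_n^*\simeq p_n^*\pi^{\X*}$ gives, for every $n$, a map $\theta_n\colon\pi_0^*\bigl(i^*j_*p_{n*}p_n^*\CF\bigr)\to i^*j_*p_{n*}p_n^*\pi^{\X*}\CF$ compatible with the transition maps of the colimit and with the $\mu_n$-actions. Using that $\varinjlim_n$ may be pulled outside $i^*j_*$ in the algebraic setting and that $\pi_0^*$ commutes with $\varinjlim_n$, the $\theta_n$ assemble into a $\mu$-equivariant comparison map $c\colon(\psi_f\CF)^{an}\to\psi_{f^{an}}(\CF^{an})$. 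For $\phi$ nothing more is needed: $\phi_f=\cone(\mathrm{sp})$, analytification commutes with cones, and $\mathrm{sp}$ is built from adjunction units, so $c$ induces a comparison map for $\phi$ that is an isomorphism as soon as $c$ is. It therefore suffices to prove $c$ is an isomorphism.

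Since $c$ is the colimit of the $\theta_n$, it is enough that each $\theta_n$ be an isomorphism; and $\theta_n$ is nothing but $i^*$ applied to the canonical map $\pi^*j_*\CH\to j_*\pi^{\X*}\CH$ for $\CH:=p_{n*}p_n^*\CF\in D^b_c(\CX^\X)$. So the lemma reduces to the following \emph{level-one comparison}: for $j\colon\CX^\X\hookrightarrow\CX$ the complement of the Cartier divisor $\CX_0=f^{-1}(0)$ and $i\colon\CX_0\hookrightarrow\CX$, the natural map $(i^*j_*\CH)^{an}\to i^{an*}j^{an}_*\CH^{an}$ is an isomorphism for every $\CH\in D^b_c(\CX^\X)$. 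This is the only genuinely non-formal point, because $j_*$ alone does not commute with analytification.

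The level-one comparison can be cited directly from Berkovich's comparison theorem for nearby cycles over a germ \cite{berkovich_vanishing_1996} (alternatively from \cite{huber_etale_1996}), in view of the identification of our $\psi_f$ with Berkovich's $\Psi_\eta$ noted after Definition \ref{def_van}. If a self-contained argument is preferred, it runs as follows. The assertion is \etale-local on $\CX$ around $\CX_0$, and it propagates along proper maps: if $\nu\colon Y\to\CX$ is proper with $Y_0:=\nu^{-1}(\CX_0)$, $i_Y\colon Y_0\hookrightarrow Y$, $j_Y\colon Y\setminus Y_0\hookrightarrow Y$, $\nu_0\colon Y_0\to\CX_0$, $\nu^\X\colon Y\setminus Y_0\to\CX^\X$, then proper base change gives $i^*j_*(\nu^\X)_*\simeq(\nu_0)_*\bigl(i_Y^*(j_Y)_*\bigr)$, and $\nu_*$, $(\nu^\X)_*$, $(\nu_0)_*$ commute with analytification \cite{huber_etale_1996}, so level-one for $(Y,Y_0)$ implies it for $(\CX,\CX_0)$ on sheaves of the form $(\nu^\X)_*(-)$. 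By resolution of singularities over the characteristic-zero field $K$ \cite{temkin_functorial_2018} and a standard d\'evissage of constructible complexes, an arbitrary $\CH$ is, up to finite extensions and shifts, of this form with $Y$ smooth, $Y_0$ a strict normal crossings divisor, and $\CL$ a local system on $Y\setminus Y_0$; since $\Lambda$ is finite, $\CL$ is automatically tame (Gabber--L\"utkebohmert \cite{lutkebohmert_riemanns_1993}), and then $i_Y^*(j_Y)_*\CL$ is computed by an explicit finite Koszul-type complex in the branch monodromies of $\CL$ (logarithmic nearby cycles, \cite{SGA7,illusie_autour_1994}), whose terms---$\CL$, its restrictions to the strata of $Y_0$, and Tate twists---analytify compatibly. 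This establishes the level-one comparison, hence the lemma.

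The main obstacle is step three: controlling $i^*j_*$ under analytification genuinely uses that over a characteristic-zero base, with finite coefficients, nearby cycles are local near $\CX_0$ and tame, so that a resolution turns them into an explicit finite complex---without this one cannot get past the failure of $j_*$ itself to commute with analytification. A secondary point requiring care is threading the $\mu$-equivariance (and, in the $\phi$-case, the compatibility with the $\mathrm{can}$ and $\mathrm{var}$ triangles of Definition \ref{def_sp_can_var}) through the d\'evissage and resolution; but since every morphism in play is assembled from adjunction units and proper base change, this is bookkeeping rather than a substantive difficulty.
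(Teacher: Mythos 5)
Your proof is correct, but it takes a substantially longer route than the paper's, and it does so because of a premise that is not actually needed. You assert that ``$j_*$ alone does not commute with analytification'' and that the comparison $(i^*j_*\CH)^{an}\isoto i^{an*}j^{an}_*\CH^{an}$ is ``the only genuinely non-formal point,'' which you then attack with resolution of singularities, Gabber--L\"utkebohmert tameness, and an explicit logarithmic (Koszul-type) computation of $i^*j_*$ on a strict normal crossings model. In fact, for a morphism of finite type $K$-schemes and a \emph{constructible} complex on the source, pushforward does commute with analytification: this is part of \cite[3.16]{bhatt_six_2022}, which covers $j_*$ (and $p_n^*$, $p_{n*}$, $i^*$) in exactly the generality needed here, since each term $p_{n*}p_n^*\CF$ of the colimit lies in $D^b_c(\CX^\X)$. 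The paper's proof is therefore essentially a one-liner: invoke \cite[3.16]{bhatt_six_2022} for each functor in the composition $i^*j_*p_{n*}p_n^*$, then use that $(-)^{an}$, being a left adjoint, commutes with the filtered colimit over $n$; the $\mu$-equivariance and the $\phi$-case come along formally, as you observe. Your ``level-one'' argument is a valid (if more elaborate) alternative proof of a special case of that comparison theorem --- it buys self-containedness at the cost of repeating, in effect, part of the proof of \cite[3.16]{bhatt_six_2022}. One small inaccuracy in your d\'evissage: Gabber--L\"utkebohmert is a statement about rigid analytic coverings; for the algebraic scheme $\CX$ over the characteristic-zero field $K$, tameness of $\CL$ along the boundary is automatic and does not need that input.
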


\begin{proof}
    This follows from the fact that $p_n^*, p_{n*}, j_*, i^*$ all commute with analytification (\cite[\nopp 3.16]{bhatt_six_2022}) and that $(-)^{an}: D(\CX_0)\rightarrow D(X_0) $, being a left adjoint, commutes with colimits. The map $(\psi_f(\CF))^{an}\isoto\psi_{f^{an}}(\CF^{an})$ is induced from the maps in these comparisons.
\end{proof}

\begin{example}\label{ex_phicomp}
    (1) Let $f: X\rightarrow\AAA^1$ be a smooth map of rigid analytic varieties, and $\CL$ a local system on $X$. Then $\phi_f(\CL)=0$.\\
    (2) Let $\CL$ be a local system in degree $0$ on the punctured disc $\Gm$, corresponding to a representation $\pi^{\mathrm{fin}}_1(\Gm,1)\rightarrow \mathrm{Aut}_K(\CL_1)$. Then there exists a canonical isomorphism $\psi_{id}(\CL)\simeq\CL_1$, with the monodromy action identified with the representation.
\end{example}

\begin{proof}
    (1) By induction on the amplitude, we may assume $\CL$ is concentrated in degree $0$. As $\phi$ is \etale local, we may assume $\CL$ is isomorphic to a constant sheaf of the form $\pi_X^*M$, where $\pi_X: X\rightarrow\Spa(K)$ and $M$ is a finite type $\Lambda$-module. For the rest of this proof, we denote $\pi_{(-)}^*M$ by $\underline{M}$. We may further assume $f: X\rightarrow\AAA^1$ is the projection $\AAA^n\rightarrow \AAA^1: (x_1,...,x_n)\mapsto x_1$ (use \cite[\nopp 1.6.10]{huber_etale_1996} to reduce to projection of polydiscs, then extend). Consider the diagram:
    % https://q.uiver.app/#q=WzAsNixbMSwxLCJcXG1hdGhiZntBfV5uIl0sWzIsMSwiXFxtYXRoYmZ7R31fbVxcdGltZXMgXFxtYXRoYmZ7QX1ee24tMX0iXSxbMiwwLCJcXG1hdGhiZntHfV9tXFx0aW1lcyBcXG1hdGhiZntBfV57bi0xfSJdLFswLDEsIjBcXHRpbWVzIFxcbWF0aGJme0F9XntuLTF9Il0sWzEsMCwiXFxtYXRoYmZ7QX1ebiJdLFswLDAsIlxcdGlsZGV7MH1cXHRpbWVzIFxcbWF0aGJme0F9XntuLTF9Il0sWzEsMCwiaiIsMCx7InN0eWxlIjp7InRhaWwiOnsibmFtZSI6Imhvb2siLCJzaWRlIjoiYm90dG9tIn19fV0sWzIsMSwicF9hIl0sWzMsMCwiaSIsMix7InN0eWxlIjp7InRhaWwiOnsibmFtZSI6Imhvb2siLCJzaWRlIjoidG9wIn19fV0sWzUsMywicF97MGF9IiwyXSxbNSw0LCJpJ19hIiwwLHsic3R5bGUiOnsidGFpbCI6eyJuYW1lIjoiaG9vayIsInNpZGUiOiJ0b3AifX19XSxbNCwwLCJcXGJhcntwfV9hIl0sWzIsNCwiaidfYSIsMix7InN0eWxlIjp7InRhaWwiOnsibmFtZSI6Imhvb2siLCJzaWRlIjoiYm90dG9tIn19fV0sWzIsMCwiIiwwLHsic3R5bGUiOnsibmFtZSI6ImNvcm5lciJ9fV0sWzUsMCwiIiwyLHsic3R5bGUiOnsibmFtZSI6ImNvcm5lciJ9fV1d
\[\begin{tikzcd}
	{\tilde{0}\times \mathbf{A}^{n-1}} & {\mathbf{A}^n} & {\mathbf{G}_m\times \mathbf{A}^{n-1}} \\
	{0\times \mathbf{A}^{n-1}} & {\mathbf{A}^n} & {\mathbf{G}_m\times \mathbf{A}^{n-1}}
	\arrow["{i'_a}", hook, from=1-1, to=1-2]
	\arrow["{p_{0a}}"', from=1-1, to=2-1]
	\arrow["\lrcorner"{anchor=center, pos=0.125}, draw=none, from=1-1, to=2-2]
	\arrow["{\bar{p}_a}", from=1-2, to=2-2]
	\arrow["{j'_a}"', hook', from=1-3, to=1-2]
	\arrow["\lrcorner"{anchor=center, pos=0.125, rotate=-90}, draw=none, from=1-3, to=2-2]
	\arrow["{p_a}", from=1-3, to=2-3]
	\arrow["i"', hook, from=2-1, to=2-2]
	\arrow["j", hook', from=2-3, to=2-2]
\end{tikzcd}\]
Here $a\in\ZZ_{\geq1}$ and $\bar{p}_a$ is $(x_1,...,x_n)\mapsto (x_1^a, x_2,...,x_n)$. Then, $\psi_f(\underline{M})\simeq\varinjlim_ai^*j_*p_{a*}\underline{M}\simeq\varinjlim_ai^*\bar{p}_{a*}j'_{a*}\underline{M}$ $\simeq\varinjlim_a\bar{p}_{0a*}i_a'^*j'_{a*}\underline{M}$. By purity (\cite[\nopp 3.9.1]{huber_etale_1996}), $\CH^r(\bar{p}_{0a*}i_a'^*j'_{a*}\underline{M})\simeq \underline{M}$ if $r=0$, $\underline{M}(-1)$ if $r=1$, and $0$ otherwise. The classes in $\CH^1$ are represented by Kummer torsors, so are killed by increasing $a$. So $\psi_f(\underline{M})\simeq\CH^0(\bar{p}_{0a*}i_a'^*j'_{a*}\underline{M})$. Take $a=1$, one sees that $\psi_f(\underline{M})\simeq i^*\underline{M}$ via the specialisation map. This finishes the proof.\\\\
    (2) Denote $j: \Gm\hookrightarrow\AAA^1$, $i: 0\hookrightarrow\AAA^1$, and $e_n: \Gm\rightarrow\Gm$, $z\mapsto z^n$. The morphism is constructed as follows: take the smallest $n\in\ZZ_{\geq1}$ such that $\CL$ is trivialised by the Kummer covering $e_n: \Gm\rightarrow\Gm$. There exists a unique isomorphism $\iota: e_n^*\CL\simeq e_n^*\underline{\CL_1}$ which restricts to $id$ at $1$ in the source $\Gm$. Then, ignoring monodromies, $\psi_{id}(\CL)\simeq\varinjlim_{n|m}i^*j_*e_{m*}e_m^*\CL\simeq\varinjlim_{n|m}i^*j_*e_{m*}e_m^*\underline{\CL_1}\simeq\psi_{id}(\underline{\CL_1})$. By (1), $\CL_1=i^*\underline{\CL_1}\xrightarrow{\mathrm{sp}}\psi_{id}(\underline{\CL_1})$ is an isomorphism. So we get $\CL_1\isoto\psi_{id}(\CL)$. The monodromy action on $\psi_{id}(\CL)$ factors through $\mathrm{Aut}(e_n)=\mu_n$ and is induced by $\sigma^*e_n^*\CL\isoto e_n^*\CL$, $\sigma\in\mu_n$. Under the isomorphism $\iota: e_n^*\CL\simeq e_n^*\underline{\CL_1}$ and restrict to the stalks at $e_n^{-1}(1)$, this is exactly the monodromy representation on $\CL_1$. One concludes that $\CL_1\isoto\psi_{id}(\CL)$ identifies the representation with the monodromy action.
\end{proof}

\section{Finiteness and a Milnor fibre interpretation}\label{sec_finiteness_Milnor}
    
In this section, we prove that nearby and vanishing cycles preserve Zariski-constructibility, and give a Milnor fibre interpretation of their stalks. The set-up is as in §\ref{sec_intro}, $\Lambda=\FF_{\ell^r}$ or $\ZZ/\ell^r$, with $\ell=p\neq0$ allowed until Remark \ref{rmk_nonZariskiConst_example} and $\ell\neq p$ for the rest.

\begin{proposition}\label{prop_phipreserveszc}
    Let $f: X\rightarrow\AAA^1$ be a map of rigid analytic varieties, and $\CF\in \DX$. Then $\psi_f(\CF)$, $\phi_f(\CF)$ lie in $D^{(b)}_{zc}(X_0)$.
\end{proposition}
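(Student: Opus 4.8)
Using the $\mathrm{can}$ triangle $i^*\CF\xrightarrow{\mathrm{sp}}\psi_f(\CF)\xrightarrow{\mathrm{can}}\phi_f(\CF)\to$ and the fact that $i^*$ preserves Zariski-constructibility, it suffices to prove $\psi_f(\CF)\in\Dbzc(X_0)$. The assertion is \'etale-local on $X$, since Zariski-constructibility is \'etale-local and $\psi_f$ commutes with \'etale pullback by \'etale base change; so we may take $X$ affinoid. Each term $i^*j_*p_{n*}p_n^*\CF$ of the colimit in Definition \ref{def_van} then lies in $\Dbzc(X_0)$: $p_n$ is finite \'etale, so $p_n^*$ and $p_{n*}$ preserve $\Dbzc$; $j\colon X^\times\hookrightarrow X$ is a Zariski-open immersion, so $Rj_*$ preserves $\Dbzc$ by \cite{bhatt_six_2022}; and $i^*$ preserves $\Dbzc$. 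Hence $\psi_f(\CF)$ is a filtered colimit of objects of $\Dbzc(X_0)$. As filtered colimits commute with $\mathcal{H}^q(-)$ and with restriction to locally closed subsets, everything reduces to a uniformity claim: there are a locally finite stratification $\mathfrak{S}$ of $X_0$ and integers $a\le b$, independent of $n$, so that every $\mathcal{H}^q(i^*j_*p_{n*}p_n^*\CF)$ is locally constant along each stratum of $\mathfrak{S}$, vanishes for $q\notin[a,b]$, and has stalks of length $\le C$ for a fixed $C$ --- for then the colimit, stratum by stratum a filtered colimit of locally constant sheaves of length $\le C$, is again in $\Dbzc(X_0)$.

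\textbf{D\'evissage and resolution of singularities.}
I would prove the uniformity claim by Noetherian induction on $\dim X$, working with $\CG:=j^*\CF$ on $X^\times$. Choose a dense smooth Zariski-open $U\subseteq X^\times$ on which $\CG$ is lisse, with Zariski-closed complement $Z\subseteq X^\times$, and let $\overline Z\subseteq X$ be its closure, so $\dim\overline Z<\dim X$. The recollement triangle $j_{U!}j_U^*\CG\to\CG\to i_{Z*}i_Z^*\CG\to$, together with the identification of $\psi_f(i_{Z*}(-))$ --- by Lemma \ref{lem_smoothqcqs}(2) for the closed immersion $\overline Z\hookrightarrow X$ --- with the pushforward to $X_0$ of $\psi_{f|_{\overline Z}}(i_Z^*(-))$, which is in $\Dbzc$ by the inductive hypothesis, reduces us to $\CG=j_{U!}\CL$ with $\CL$ lisse on $U$, and then (via $j_{U!}\CL\to Rj_{U*}\CL\to i_{Z*}i_Z^*Rj_{U*}\CL\to$ and the same device) to $\CG=Rj_{U*}\CL$. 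Since $\mathrm{char}\,K=0$, embedded resolution of singularities and principalization of the ideal of $f^{-1}(0)$ (Temkin) provide a proper modification $\pi\colon X'\to X$, an isomorphism over a dense open containing $U$, with $X'$ smooth, with $(f\pi)^{-1}(0)$ and $\pi^{-1}(Z)$ contained in a strict normal crossings divisor $E$, and with $f\pi$ \'etale-locally a monomial in coordinates cutting out $E$. Using $\psi_f(R\pi_*(-))\simeq R\pi_{0*}\psi_{f\pi}(-)$ (Lemma \ref{lem_smoothqcqs}(2), $\pi$ proper), that $R\pi_{0*}$ preserves $\Dbzc$, and that the cone of $\CF\to R\pi_*\pi^*\CF$ is supported on the lower-dimensional non-isomorphism locus of $\pi$ (handled by the inductive hypothesis via the same device), we reduce to a local model in which $X$ is an open of a polydisc, $f$ is a monomial in coordinates cutting out an SNCD containing $X_0$, and $\CG$ is lisse on the complement of that SNCD.

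\textbf{The local computation.}
In this model the stalks of $\mathcal{H}^q(i^*j_*p_{n*}p_n^*\CG)$ along the strata of the SNCD are computed by the cohomology of lisse sheaves on finite \'etale covers of the ``Milnor tube'' of the monomial $f$ at the relevant point --- a rigid space whose pertinent tame fundamental groups are, by the theorem of Gabber and L\"utkebohmert (\cite{lutkebohmert_riemanns_1993}), topologically finitely generated of finite cohomological dimension --- so these cohomology groups are finite, computed by a Koszul-type complex in the monodromy operators, concentrated in a range of degrees depending only on $\dim X$, and, the pulled-back data being locally constant along each stratum, constructible for the SNCD stratification. The one delicate point is the length bound uniform in $n$: although $p_{n*}p_n^*\CG$ has rank growing linearly in $n$, this growth cancels upon applying $i^*j_*$ --- already in the one-variable case $\mathcal{H}^0(i^*j_*p_{n*}p_n^*\CL)\cong\CL_1^{T^n}$ has length $\le\mathrm{rk}\,\CL$ for every $n$ (this is what underlies Example \ref{ex_phicomp}(2)) --- and a direct estimate on the Koszul complex yields a bound of order $2^{\dim X}$ times the generic rank of $\CG$. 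Combined with the colimit criterion of the first step, this proves the uniformity claim, and hence the proposition.

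\textbf{Main obstacle.}
The formal part --- the reduction to $\psi_f$, term-by-term constructibility via \cite{bhatt_six_2022}, and the d\'evissage/resolution reductions (Lemma \ref{lem_smoothqcqs}, Temkin) --- is routine. I expect the genuine work, and the only delicate point, to be the last step: identifying the Milnor tube of a monomial inside the SNCD model precisely enough to extract both the constructibility (not merely stalkwise finiteness) and the $n$-uniform length bound, where the tameness theorem of Gabber--L\"utkebohmert (equivalently, finite cohomological dimension and topological finite generation of the local fundamental groups) is the essential input.
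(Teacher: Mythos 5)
Your proposal takes a genuinely different route from the paper, and the route has a real gap at its centre.

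\textbf{What you do differently.} The paper first invokes the generating property of $\Dbzc$ on an affinoid to reduce all the way to a \emph{constant} sheaf $\underline M$ (via cones, shifts, summands of $g_*\underline M$ for $g$ finite, and Lemma \ref{lem_smoothqcqs}(2)); it then resolves to a smooth $X'$ with strictly monomial zero fibre, reduces \emph{\'etale-locally} to a monomial on a polydisc, and quotes Lemma \ref{lem_comparisonphi} to reduce the monomial computation to the \emph{algebraic} case, where constructibility of nearby cycles is classical. Finally it passes back from $X'$ to $X$ by cohomological descent along a proper hypercovering, using the finite cohomological dimension of $\psi_f$ to truncate the totalisation. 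By contrast, you keep a general lisse $\CL$ on a dense open, argue term-by-term inside the colimit $\varinjlim_n i^*j_*p_{n*}p_n^*$, and try to establish a ``uniformity claim'' (common stratification, amplitude, and length bound independent of $n$), with the essential content pushed into a local Milnor-tube computation via a Koszul-type complex and Gabber--L\"utkebohmert tameness.

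\textbf{Where the gap is.} The uniformity claim is exactly what your proof needs and exactly what it does not supply. Three sub-claims are asserted rather than proved: (a) that the constructibility stratification of $i^*j_*p_{n*}p_n^*\CF$ can be taken independent of $n$ after resolution; (b) that there is a Koszul-type presentation of the stalks giving a length bound ``of order $2^{\dim X}$ times the generic rank'' uniformly in $n$; and (c) that a filtered colimit of sheaves locally constant along a fixed stratification with uniformly bounded finite stalks is again locally constant and constructible. Point (c) is true but requires an argument (stabilisation of stalks plus stabilisation of local transition data); points (a) and (b) are the actual mathematical content of the proposition and are the ones for which you would, in any case, want precisely the comparison with the algebraic monomial case that the paper invokes --- in other words, your Step~3 re-derives the algebraic result by hand in the rigid setting instead of importing it via Lemma \ref{lem_comparisonphi}. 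You flag this yourself as the ``main obstacle,'' which is accurate, but it is not a detail to be filled in; it is the theorem. A secondary, more minor, point: the dévissage to $\CG=Rj_{U*}\CL$ interacts awkwardly with the outer colimit, since $Rj_{U*}$ does not commute with $p_{n*}p_n^*$ on the nose, and you would need to track the resulting base-change identifications through the whole uniformity argument.

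\textbf{What the paper's approach buys.} Reducing to constant $\underline M$ first avoids all monodromy bookkeeping; comparison with the algebraic nearby cycle after resolution makes the monomial-model computation a black box; and cohomological descent with the finite cohomological dimension of $\psi_f$ replaces your colimit-uniformity argument with a finite-limit truncation argument, which is where the boundedness actually gets used. If you want to salvage your route, the cleanest fix is to replace your Step~3 by the paper's: after resolution, pass \'etale-locally to a monomial on a polydisc and invoke Lemma \ref{lem_comparisonphi}; then the per-term constructibility and uniformity both drop out of the algebraic statement, and the dévissage/descent steps you have already sketched can carry the rest.
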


\begin{proof}
    It suffices to prove for $\psi_f(\CF)$, the result for $\phi_f(\CF)$ follows. As Zariski-constructibility can be checked locally (\cite[\nopp 3.5]{bhatt_six_2022}), we may assume $X=\Spa(A)$ is affinoid. Then every $\CF\in D^b_{zc}(X)$ can be obtained by finitely many steps of taking cones, shifts, and summands of sheaves of the form $g_*\underline{M}$, where $g: Y\rightarrow X$ is a finite map and $\underline{M}$ is a constant constructible sheaf in degree $0$ on $Y$ (\cite[\nopp 3.6]{bhatt_six_2022}), so we may assume $\CF=g_*\underline{M}$. By Lemma \ref{lem_smoothqcqs}.2 and the fact that finite pushforward preserves Zariski-constructibility, we may further assume $\CF=\underline{M}$.\\\\
    We would like to show $\psi_f(\underline{M})$ is in $D^b_{zc}(X_0)$. By resolution of singularities (\cite[\nopp 1.1.13.(i), 1.1.11]{temkin_functorial_2018}), there exists a proper surjective map $a_0: X'\rightarrow X$, with $X'$ smooth and $(fa_0)^{-1}(0)=X_0\X_X X'$ strictly monomial. Denote $fa_0$ by $f'$. We first show $\psi_{f'}(\underline{M}_{X'})$ is Zariski-constructible. For each $s\in X_0'$, suppose $s$ is contained in $r$ irreducible components of the underlying simple normal crossings divisor $E$ of $X_0'$, there exists an open neighbourhood $s\in U\subseteq X'$ and functions $T_1,...,T_r,...,T_n\in \CO_U(U)$, such that $U\xrightarrow{(T_1,...,T_n)}\AAA^n$ is \etale and $X_0\cap U'=V(T_1^{n_1}...T_r^{n_r})$. Consequently $f'_U=uT_1^{n_1}...T_r^{n_r}$ for some unit $u$. Let $U'$ be the following fibre product.
    % https://q.uiver.app/#q=WzAsNCxbMSwwLCJVIl0sWzEsMSwiXFxtYXRoYmZ7R31fbSJdLFswLDEsIlxcbWF0aGJme0d9X20iXSxbMCwwLCJVJyJdLFswLDEsInUiXSxbMiwxLCJ6XFxtYXBzdG8gel57YV8xfSIsMl0sWzMsMiwidV57XFxmcmFjezF9e2FfMX19IiwyXSxbMywwXSxbMywxLCIiLDEseyJzdHlsZSI6eyJuYW1lIjoiY29ybmVyIn19XV0=
\[\begin{tikzcd}
	{U'} & U \\
	{\mathbf{G}_m} & {\mathbf{G}_m}
	\arrow[from=1-1, to=1-2]
	\arrow["{u^{\frac{1}{n_1}}}"', from=1-1, to=2-1]
	\arrow["\lrcorner"{anchor=center, pos=0.125}, draw=none, from=1-1, to=2-2]
	\arrow["u", from=1-2, to=2-2]
	\arrow["{z\mapsto z^{n_1}}"', from=2-1, to=2-2]
\end{tikzcd}\]
    On $U'$, we have $f'_{U'}=T_1'^{n_1}T_2^{n_2}...T_r^{n_r}$ with $T_1'=u^{\frac{1}{n_1}}T_1$, and $U'\xrightarrow{(T_1',T_2,...,T_n)}\AAA^n$ is still \etale. As nearby cycles are \etale local, it is sufficient to prove $\psi_{T_1'^{n_1}...T_r^{n_r}}(\underline{M}_{\AAA^n})$ is Zariski-constructible. This follows from the comparison with the algebraic case (Lemma \ref{lem_comparisonphi}).\\\\
    Finally, we prove $\psi_f(\underline{M})$ is Zariski-constructible. Resolution of singularities gives a proper hypercovering $a_{\bullet}: X_{\bullet}\rightarrow X$ with each $X_i$ smooth and $X_i\X_X X_0$ strictly monomial. By cohomological descent, $\underline{M}_X\isoto \varprojlim_{\Delta}(a_{0*}\underline{M}_{X_0}\rightrightarrows a_{1*}\underline{M}_{X_1}\rightthreearrows\cdots)$, where $\varprojlim_{\Delta}$ denotes the totalisation. Apply $\psi_f$, we get (notations as in Definition \ref{def_van}):
\begin{align*}
\psi_f(\underline{M}_X) &\isoto\varinjlim i^*j_{n*}j_n^*\varprojlim_{\Delta}(a_{0*}\underline{M}_{X_0}\cdots) \\
&\simeq\varinjlim i^*\varprojlim_{\Delta}(j_{n*}j_n^*a_{0*}\underline{M}_{X_0}\cdots)\,\,(\text{$\varprojlim_{\Delta}$ commutes with pullbacks, $j_{n*}$ commutes with limits})\\
&\simeq\varprojlim_{\Delta}(\varinjlim i^*j_{n*}j_n^*a_{0*}\underline{M}_{X_0}\cdots)\,\,(\text{$\varprojlim_{\Delta}$ commutes with filtered colimits with coconnective entries})\\
&\simeq\varprojlim_{\Delta}(a_{0*}\psi_{fa_0}(\underline{M}_0)\rightrightarrows\cdots)\,\,(\text{$\psi$ commutes with proper pushforwards})
\end{align*}
As $\psi$ has finite cohomological dimension if the space is finite dimensional\footnote{Proof (see Lemma \ref{lem_j_coho_fini} for a more general statement): recall $\psi_f=\varinjlim i^*j_*p_{n*}p_n^*$, the only possibly non-obvious part is to bound the cohomological dimension of $j_*$. It suffices to show: for $X$ affinoid, and $\CF$ a torsion abelian sheaf on $X^\X$ in degree $0$, $R\Gamma(X^\X,\CF)\in D(\ZZ)$ is bounded uniformly in $\CF$. Write $X^\X=\cup_i X^\X_i$, where $X^\X_i=\{|f|\geq\epsilon_i\}$, and $\{\epsilon_i\}$ is a sequence in $|K^\X|$ converging to $0$. Note, each $X^\X_i$ is an affinoid open and this is an increasing union. By the bound on cohomological dimension (\cite[\nopp 2.8.3]{huber_etale_1996}), $R\Gamma(X^\X_i,\CF)\in D^{[0,2\dim(X)]}(\ZZ)$ . So $R\Gamma(X^\X,\CF)\simeq R\varprojlim_iR\Gamma(X^\X_i,\CF)\in D^{[0,2\dim(X)+1]}(\ZZ)$.}, $\psi_f(\underline{M}_X)$ is bounded. Each $\CH^i(\psi_f(\underline{M}_X))$ can be computed using a finite truncation of $\varprojlim_{\Delta}(a_{0*}\psi_{fa_0}(\underline{M}_0)\rightrightarrows\cdots)$. This is a finite limit, with each term Zariski-constructible, so is Zariski-constructible.
\end{proof}

\begin{remark}\label{rmk_nonZariskiConst_example}
    As $\psi_f(\CF)$ only depends on $\CF|_{X^\times}$, for $\CF\in D^{(b)}_{zc}(X^\times)$ which can be extended to $\DX$, we also have $\psi_f(\CF)\in D^{(b)}_{zc}(X_0)$. Note that $\psi_f(\CF)$ in general is not Zariski-constructible for $\CF\in D^{(b)}_{zc}(X^\times)$: consider the direct sum of the skyscraper sheaves $\underline{\Lambda}_{a_i}$ where $\{a_i\}_{i\in\NN}$ is a sequence of classical points in $D$ converging to $0$, then $\psi_{id}(\oplus_i\underline{\Lambda}_{a_i})\cong\varinjlim_n \frac{\Pi(\Lambda^{\oplus n})}{\oplus(\Lambda^{\oplus n})}$, where $\Pi$ and $\oplus$ are indexed by $\NN$.
\end{remark}

Assume $\ell\neq p$ for the rest of this section. The Milnor fibre interpretation of nearby and vanishing cycles will be a consequence of some finiteness results of Huber. For the convenience of the reader, we recall Huber's results here, adapted to our set-up.

\begin{definition}[{\cite[\nopp 1.2]{huber_finiteness_1998}}]
    Let $X$ be a rigid analytic variety. A sheaf $\CF\in D^{(+)}(X)$ is called \underline{overconvergent-quasi-constructible} (ocqc) if each $\CH^i(\CF)$ is ocqc. A sheaf $\CF$ in degree $0$ is ocqc if it is overconvergent and for every $x\in X$, there exist an \etale morphism $g: Y\rightarrow X$, a locally closed constructible subset $L\subseteq Y$ with $x\in g(L)$, and a decreasing sequence $Y=Y_0\supseteq Y_1\supseteq...\supseteq Y_r=\varnothing$ of Zariski-closed subspaces such that the restriction of $\CF$ on $L^\circ\cap(Y_i-Y_{i+1})$ extends to a locally constant sheaf in degree $0$ of finite type stalks on $L\cap(Y_i-Y_{i+1})$. Here, $L^\circ$ is the interior of $L\subseteq Y$.
\end{definition}

\begin{theorem}[{\cite[\nopp 2.1]{huber_finiteness_1998}}]\label{thm_huber1}
    Let $f: X\rightarrow Y$ be a quasi-compact quasi-separated map of rigid analytic varieties, with $\dim Y\leq 1$. Then, $f_*$ preserves ocqc sheaves.
\end{theorem}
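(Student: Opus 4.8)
This is \cite[2.1]{huber_finiteness_1998}, and we indicate how one proves it (recall $\ell\neq p$ is in force). The plan is to reduce to constant coefficients, split off the cases $\dim Y=0$ and $\dim Y=1$, and in the latter carry out a local analysis at a finite bad locus. First, being ocqc is a pointwise condition insensitive to étale localisation on $X$, and $f_*$ commutes with étale base change on $Y$, so one reduces to $Y$ affinoid; since $X$ is finite-dimensional $f_*$ has bounded amplitude, so a spectral-sequence dévissage (using that ocqc sheaves are stable under subquotients and extensions) reduces the claim to showing $R^q f_*\CF$ is ocqc for a single ocqc sheaf $\CF$ in degree $0$. Passing to an étale cover of $X$ and stratifying, and using that $h_*$ for a locally closed constructible immersion $h$ and finite pushforward preserve ocqc, one further reduces to $\CF=\underline M$ with $M$ a finite $\Lambda$-module; so it remains to show each $R^q f_*\underline M$ is ocqc.

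If $\dim Y=0$, then $Y$ is a finite disjoint union of copies of $\Spa(K)$ and the assertion is the finiteness of $R\Gamma(X,\underline M)$ for $X$ quasi-compact quasi-separated over $K$, one of the finiteness theorems of \cite{huber_etale_1996}, which rests ultimately on the absence of wild ramification for rigid curves over $K$ \cite{lutkebohmert_riemanns_1993} together with dévissage along a compactification. If $\dim Y=1$, one localises so that $Y$ is a smooth affinoid curve. The first step is to produce a finite set $Z=\{y_1,\dots,y_r\}\subseteq Y$ such that $R^q f_*\underline M$ is overconvergent and locally constant of finite type over $U:=Y\setminus Z$; one obtains this by combining the $\dim 0$ case applied fibrewise, a spreading-out argument, and --- where $X$ is non-proper over $Y$ --- Huber's finiteness results for the proper and the non-proper case from \cite{huber_etale_1996}. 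The second step is the behaviour near each $y_i$: choosing a small disc $D_i\subseteq Y$ around $y_i$, with puncture $D_i^\times$ and inclusions $j_i\colon D_i^\times\hookrightarrow D_i$ and $\iota_i\colon y_i\hookrightarrow D_i$, the local system $\CL:=R^q f_*\underline M|_{D_i^\times}$ has finite rank, so by \cite{lutkebohmert_riemanns_1993} its monodromy factors through $\mu=\hat\ZZ(1)$ and is trivialised by a Kummer covering; hence $j_{i*}\CL$ is ocqc on $D_i$, and $\iota_i^* j_{i*}\CL$ lies in degrees $0$ and $1$ with stalks the $\mu$-invariants and $\mu$-coinvariants of $\CL$ (up to a Tate twist) --- this being the continuous cohomology of the pro-cyclic group $\mu$ --- hence of finite type. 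Gluing the description over $U$ with the local ones near the $y_i$ then exhibits $R^q f_*\underline M$ as ocqc.

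The main obstacle, and the only place where $\dim Y\leq1$ is genuinely used, is this local analysis at the bad locus: one needs $Z$ to be zero-dimensional so that the local monodromy of $R^q f_*\underline M$ around each $y_i$ is pro-cyclic, and one needs \cite{lutkebohmert_riemanns_1993} to know there is no wild ramification, so that $j_{i*}$ preserves finite type and the extension across $y_i$ is constructible. For $\dim Y\geq2$ the bad locus would be positive-dimensional, the local fundamental group around a boundary divisor is no longer pro-cyclic, and one would need a resolution with normal-crossings boundary together with a genuine theory of iterated nearby cycles to carry out the analogous step.
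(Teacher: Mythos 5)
The paper does not prove this theorem: it is imported verbatim from Huber, cited as \cite[2.1]{huber_finiteness_1998}, and recalled together with the definition of ocqc sheaves solely as a black-box input for the Milnor fibre interpretation (Proposition \ref{prop_nearbyfibinterp}). So there is no proof in the paper against which to compare your sketch.

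As an account of Huber's own argument, your outline has the right shape: dévissage to constant coefficients, the quasi-compact finiteness theorem for $\dim Y=0$, and, for $\dim Y=1$, a local-system-on-a-dense-open-plus-local-analysis-at-finitely-many-points decomposition, with Lütkebohmert's theorem \cite{lutkebohmert_riemanns_1993} controlling the local monodromy. You also correctly identify where the hypothesis $\dim Y\leq1$ enters. A few places are thin or imprecise relative to what Huber actually has to do. First, you never address the \emph{overconvergence} part of the ocqc condition, which is a genuine extra constraint in the adic-space setting and occupies a substantial portion of Huber's work (it amounts to comparing sections over quasi-compact opens with sections over their interiors, in the style of what is recalled as Theorem \ref{thm_huber2}); saying the pushforward is a local system of finite rank on a dense open is not yet the ocqc condition. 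Second, the local monodromy of $\CL$ near $y_i$ factors through a finite cyclic quotient of the prime-to-$p$ part of $\hat\ZZ(1)$, not through $\hat\ZZ(1)$ itself; the Tate-twisted invariants/coinvariants description of $\iota_i^*j_{i*}\CL$ is therefore correct but the intermediate statement should be adjusted. Third, reducing to $Y$ a \emph{smooth} affinoid curve needs a remark (normalisation of a rigid curve is finite, and finite pushforward preserves ocqc) which you omit. Finally, the claim that $h_*$ for a locally closed constructible immersion $h$ preserves ocqc is exactly the kind of statement that is nontrivial in this setting (for open immersions, finiteness of $j_*$ is not free) and is part of what has to be proved rather than something one may appeal to during the dévissage. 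None of these affect the high-level strategy, but they are the places where the real work in Huber's paper lies.
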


\begin{theorem}[{\cite[\nopp 3.6]{huber_finiteness_1998}}]\label{thm_huber2}
    Let $Z\hookrightarrow X$ be a closed imbedding of rigid analytic varieties. Assume $X=\Spa(A)$ is affinoid, and $\{f_1,...,f_r\}\subseteq A$ be such that $Z=V(f_1,...,f_r)$ set-theoretically. For $\epsilon\in|K^\X|$, denote $T_\epsilon=\{|f_i|\leq\epsilon, \forall i\}$, and $T'_\epsilon=\{|f_i|<\epsilon, \forall i\}$. Note $T_\epsilon$ and $X-T'_\epsilon$ (resp. $T'_\epsilon$ and $X-T_\epsilon$) are quasi-compact open (resp. closed) subsets of $X$. Then, for $\CF\in D^b(X)$ ocqc, $\exists\,\epsilon_0\in|K^\X|$ such that $\forall\,\epsilon<\epsilon_0$ with $\epsilon\in|K^\X|$, we have the following isomorphisms induced by restrictions:\\ 
    (1) $R\Gamma(T_\epsilon,\CF)\isoto R\Gamma(T'_\epsilon,\CF)\isoto R\Gamma(T'^\circ_\epsilon,\CF)\isoto R\Gamma(Z,\CF)$;\\
    (2) $R\Gamma(X-Z,\CF)\isoto R\Gamma(X-T'_\epsilon,\CF)\isoto R\Gamma(X-T^\circ_\epsilon,\CF)\isoto R\Gamma((X-T^\circ_\epsilon)^\circ,\CF)$.\\
    Here, $(-)^\circ$ denotes interior, and $R\Gamma(T'_\epsilon,\CF)$ and $R\Gamma(X-T^\circ_\epsilon,\CF)$ are cohomologies of pseudo-adic spaces. 
\end{theorem}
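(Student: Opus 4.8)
The plan is to follow Huber: reduce the statement, by a dévissage using the structure of $\mathrm{ocqc}$ sheaves, to a computation on a one‑dimensional base, where Theorem~\ref{thm_huber1} provides exactly the finiteness ("finitely many critical radii'') that gives meaning to "$\epsilon$ small enough''. \textbf{Step 1 (one sheaf).} Since $\CF\in D^b(X)$ has finitely many nonzero $\CH^i(\CF)$, each $\mathrm{ocqc}$, I would first prove everything for $\CF$ concentrated in degree $0$; for general $\CF$ one takes $\epsilon_0$ to be the minimum of the finitely many thresholds obtained, and concludes via the hyper­cohomology spectral sequences of $R\Gamma(T_\epsilon,-),\dots,R\Gamma(Z,-)$ together with the five lemma (the restriction maps being morphisms of those spectral sequences). \textbf{Step 2 (dévissage, induction on $\dim X$).} By definition of $\mathrm{ocqc}$, étale‑locally on $X$ there is a finite chain $X=Y_0\supseteq\cdots\supseteq Y_m=\varnothing$ of Zariski‑closed subspaces along whose strata $\CF$ is the restriction of a locally constant sheaf of finite type extending across the next stratum; a constructibility/gluing argument turns this into a finite filtration of $\CF$. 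Applying $R\Gamma(T_\epsilon,-)$, $R\Gamma(T'_\epsilon,-)$, $R\Gamma((T'_\epsilon)^{\circ},-)$, $R\Gamma(Z,-)$ (resp. the complementary functors for~(2)) to the excision triangle $j_!j^*\CF\to\CF\to i_*i^*\CF\to$ for $i\colon Y\hookrightarrow X$ a member of the chain, and using that $T_\epsilon\cap Y$ is the $\epsilon$‑tube of $Z\cap Y$ in $Y$ relative to the restrictions of the $f_i$ (and similarly for $T'_\epsilon$, interiors, complements), reduces the theorem — by induction on $\dim X$ — to the case $\CF=\CL$ locally constant of finite type on all of $X$, the inductive hypothesis disposing of the strictly lower‑dimensional pieces.

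\textbf{Step 3 (locally constant $\CL$).} Here I would first reduce to $r=1$ by a second induction, on $r$: for $r\ge 2$, applying the excision triangle for $V(f_r)\hookrightarrow X$ and using that $T_\epsilon\cap V(f_r)$ is the $(r-1)$‑tube of $Z$ in $V(f_r)$, one is left with the vanishing $R\Gamma(T_\epsilon, j_!j^*\CL)=0$ for $j$ the complement of $V(f_r)$ — which is a local‑acyclicity property of the lisse sheaf $\CL$ relative to the map $f_r\colon X\to\AAA^1$ (compare Example~\ref{ex_phicomp}); the only truly delicate point is that the threshold produced for $f_r$ may depend on the radii used for $f_1,\dots,f_{r-1}$, so one must check this dependence is mild enough (e.g. linear) for the diagonal radius $\epsilon$ to survive. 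For $r=1$, pick $R\in|K^{\X}|$ with $|f_1|\le R$ on $X$, so that $f_1$ is a quasi‑compact quasi‑separated map $g\colon X\to D(R)$ to a one‑dimensional space; by Theorem~\ref{thm_huber1} the complex $Rg_*\CL$ is $\mathrm{ocqc}$ on $D(R)$. Since $T_\epsilon=g^{-1}(D(\epsilon))$, $Z=g^{-1}(0)$, and $\{T_\epsilon\}_\epsilon$ is cofinal among quasi‑compact open neighbourhoods of $Z$, overconvergence gives $R\Gamma(Z,\CL)=\varinjlim_\epsilon R\Gamma(D(\epsilon),Rg_*\CL)$, so it suffices to show this colimit stabilises. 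But $Rg_*\CL$, being $\mathrm{ocqc}$ on a one‑dimensional affinoid, has a proper Zariski‑closed — hence finite — bad locus $S$; choosing $\epsilon_0$ with $D(\epsilon_0)\cap S\subseteq\{0\}$, the restriction of $Rg_*\CL$ to $D^{\X}(\epsilon)$ is locally constant of finite type for $\epsilon<\epsilon_0$, and the excision triangle on $D(\epsilon)$ reduces the claim to $R\Gamma\bigl(D(\epsilon),j_{0!}\bigl(Rg_*\CL|_{D^{\X}(\epsilon)}\bigr)\bigr)=0$. This holds because, for $\Lambda$ finite and $\ell\ne p$, a locally constant sheaf on $D^{\X}(\epsilon)$ is Kummer (no wild ramification, Gabber--Lütkebohmert, \cite{lutkebohmert_riemanns_1993}), whence $R\Gamma(D^{\X}(\epsilon),-)$ is independent of $\epsilon$, and the closed disc is cohomologically trivial.

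\textbf{Step 4 (the variants and part (2)).} The assertions about $T'_\epsilon$ and $(T'_\epsilon)^{\circ}$ follow from the $T_\epsilon$‑statement: $T'_\epsilon$ is the increasing union of the $T_\delta$ with $\delta<\epsilon$, so $R\Gamma(T'_\epsilon,\CF)=R\varprojlim_{\delta<\epsilon}R\Gamma(T_\delta,\CF)$ by a Mittag‑Leffler/$R\varprojlim$ argument, and once all the $R\Gamma(T_\delta,\CF)$ ($\delta<\epsilon_0$) are identified compatibly with $R\Gamma(Z,\CF)$ the pro‑system is constant; the same with $(T'_\epsilon)^{\circ}$, using that $\{T_\delta\}$, $\{T'_\delta\}$, $\{(T'_\delta)^{\circ}\}$ are mutually cofinal among (partially proper) neighbourhoods of $Z$ and that $\CF$ is overconvergent. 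Part~(2) is obtained from part~(1) by applying the complementary recollement triangle $i_*i^!\to\mathrm{id}\to Rj_*j^*$ to $X$ against the quasi‑compact opens $X-T^{\circ}_\epsilon$; Huber runs (1) and (2) in tandem.

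\textbf{Main obstacle.} The genuine difficulty — already visible in the non‑quasi‑compactness of $X^{\X}$ exploited elsewhere in this paper — is precisely that the tubes $T'_\epsilon$ and the open sets $X-T_\epsilon$ are not quasi‑compact, so one cannot freely commute $R\Gamma$ with the colimits and limits over $\epsilon$; every identification of $R\Gamma(Z,-)$ or $R\Gamma(X-Z,-)$ with a stabilised tube cohomology rests squarely on the overconvergence half of the $\mathrm{ocqc}$ hypothesis and on Huber's finiteness Theorem~\ref{thm_huber1}. The secondary, purely technical burden is the radius bookkeeping in the reduction from $r$ functions to one in Step~3; once the skeleton above is in place, this is where most of the real work lies.
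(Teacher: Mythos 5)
This theorem has no proof in the paper: it is quoted verbatim as \cite[3.6]{huber_finiteness_1998} and used as a black box, so there is no internal argument to compare yours against.

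As a reconstruction of Huber's proof, your sketch identifies plausible ingredients---reduce to a single ocqc sheaf in degree zero, devissage along a stratification, push forward along $f_1$ to a one-dimensional base, then use the finiteness from Theorem~\ref{thm_huber1} together with the absence of wild ramification (Gabber--L\"utkebohmert). But it has real gaps, some you acknowledge and some you do not. First, the globalization in Step~2 is not routine: the ocqc condition furnishes the stratification only \'etale-locally, over a locally closed constructible subset $L$ of an \'etale $Y\to X$, and turning these local data into a finite global filtration of $\CF$ on the affinoid $X$ is genuine work, not ``a constructibility/gluing argument.'' Second, you rightly flag the radius bookkeeping in the reduction from $r$ functions to one as delicate and then leave it open; that is a large fraction of the remaining content, and it is at least as plausible that one should handle all $r$ at once via the map $(f_1,\dots,f_r)\colon X\to D^r$, trading the dependence problem for a higher-dimensional base. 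Third, the concluding vanishing $R\Gamma(D(\epsilon), j_{0!}(Rg_*\CL|_{D^\X(\epsilon)}))=0$ is correct but ``the closed disc is cohomologically trivial'' is not the reason: what is actually needed is that $R\Gamma(D^\X(\epsilon),\CG)\isoto(j_{0*}\CG)_0$ for a Kummer local system $\CG$, which one gets from the radius-independence of punctured-disc cohomology via Hochschild--Serre along the Kummer cover. None of this kills the strategy, but what you have is an informed outline of Huber's theorem, not a proof of it---and since the paper itself does not attempt one, the honest reading is that this is a cited external input, not an internal lemma to be re-derived.
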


\begin{example}\label{ex_huber}
    (1) Zariski-constructible sheaves are ocqc.\\
    (2) By \cite[\nopp 1.3.(iv)]{huber_finiteness_1998}, on a $1$-dimensional quasi-compact rigid analytic variety $X$, an overconvergent sheaf $\CF$ in degree $0$ is ocqc if and only if there is a quasi-compact open $U\subseteq X$ such that $\CF|_U$ is Zariski-constructible, and, for every $x\in X-U$, there is a locally closed locally constructible subset $ L\subseteq X$ with $x\in L$, such that $\CF|_{L^\circ}$ extends to a locally constant sheaf in degree $0$ of finite type stalks on $L$. This implies: for a maximal point $x\in X$, if $x$ is a classical point, then there exists some open neighbourhood $V$ of $x$ such that $\CF$ is locally constant on $V-x$ (possibly also on $V$); otherwise, $\CF$ is locally constant in some open neighbourhood of $x$. We leave this as an exercise in point-set topology.
\end{example}

\begin{remark}\label{rmk_huber}
    Note the following consequence of Theorem \ref{thm_huber2}: let $X$ be a rigid analytic variety and $\CF\in \DX$. Then, at every classical point $x\in X$, if $\{f_1,...,f_r\}$ are local functions such that $x=V(f_1,...,f_r)$ set-theoretically, then $R\Gamma(T_\epsilon,\CF)\isoto\CF_x$ for every small enough $\epsilon\in|K^\X|$.
\end{remark}

\begin{proposition}\label{prop_nearbyfibinterp}
    Let $f: X=\Spa(A)\rightarrow\AAA^1$ be a map of rigid analytic varieties, $\CF\in D^b_{zc}(X)$, $x\in X_0$ be a classical point, and $f_1,...,f_r\in A$ such that $x=V(f_1,...,f_r)$ set-theoretically. Then there exists $\epsilon_0\in |K^\times|$ such that for every $0<\epsilon<\epsilon_0$ with $\epsilon\in|K^\times|$, there exists $\eta_0\in |K^\times|$ such that for every $0<\eta<\eta_0$ with $\eta\in|K^\times|$, and every classical point $a\in D^\times(\eta)$, there exists an isomorphism $\psi_f(\CF)_x\cong R\Gamma(U_{\epsilon,a},\CF)$, where $U_{\epsilon,a}:=\{f=a,|f_i|\leq\epsilon,\forall i\}\subseteq X$ is the “Milnor fibre”. Similarly for $\phi_f(\CF)_x$: there exist (possibly different) constants $(\epsilon_0,\epsilon,\eta_0,\eta)$ as above such that there exists an isomorphism $\phi_f(\CF)_x\cong \cone(\CF_x\rightarrow R\Gamma(U_{\epsilon,a},\CF))$ for every classical point $a\in D^\times(\eta)$.
\end{proposition}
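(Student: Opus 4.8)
The plan is to push forward along $f$, reducing to the one–dimensional situation — the nearby cycle of a sheaf on a disc at its origin, where a Milnor-fibre interpretation is essentially tautological — and then to translate the resulting generic stalk on the disc into $R\Gamma(U_{\epsilon,a},\CF)$ by two applications of Huber's comparison theorems.

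I would start with a harmless reduction: enlarging the list $\{f_i\}$ we may assume $f$ itself occurs among the $f_i$ (this replaces $U_{\epsilon,a}$ by $U_{\epsilon,a}\cap\{|f|\le\epsilon\}$, which changes nothing once $\eta\le\epsilon$), so that $N_\epsilon:=\{|f_i|\le\epsilon\text{ for all }i\}$ is a rational affinoid neighbourhood of $x$ on which $|f|\le\epsilon$ and $f$ restricts to a quasi-compact quasi-separated map $f_\epsilon\colon N_\epsilon\to D(\epsilon)$. Now fix $\epsilon$ small. As $\psi_f(\CF)\in D^{(b)}_{zc}(X_0)$ by Proposition \ref{prop_phipreserveszc}, Remark \ref{rmk_huber} gives $\psi_f(\CF)_x\simeq R\Gamma(N_\epsilon\cap X_0,\psi_f(\CF))$; Lemma \ref{lem_smoothqcqs}(1) for the open immersion $N_\epsilon\hookrightarrow X$ (here we use $\ell\neq p$) rewrites this as $R\Gamma(N_\epsilon\cap X_0,\psi_{f_\epsilon}(\CF|_{N_\epsilon}))$; and Lemma \ref{lem_smoothqcqs}(2) for $f_\epsilon$, together with the open immersion $u_\epsilon\colon D(\epsilon)\hookrightarrow\AAA^1$, identifies the latter — after pushing forward to the point $(D(\epsilon))_0=\{0\}$ — with $\psi_{u_\epsilon}(\CG)_0$, where $\CG:=Rf_{\epsilon*}(\CF|_{N_\epsilon})$. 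Thus $\psi_f(\CF)_x\simeq\psi_{u_\epsilon}(\CG)_0$, and $\CG\in D^b(D(\epsilon))$ is overconvergent-quasi-constructible by Huber's finiteness (Theorem \ref{thm_huber1}, since $\dim D(\epsilon)=1$ and $\CF|_{N_\epsilon}$ is ocqc).

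Next I would treat the disc case. By Example \ref{ex_huber}(2), applied to each cohomology sheaf of $\CG$ (the origin being a maximal point of the $1$-dimensional $D(\epsilon)$), there is $\eta_0\le\epsilon$ with $\CG|_{D^\times(\eta_0)}$ a local system; using Lemma \ref{lem_smoothqcqs}(1) once more to replace $D(\epsilon)$ by $D(\eta_0)$ (the special fibre is still $\{0\}$) and then running the computation in the proof of Example \ref{ex_phicomp}(2) with $D^\times(\eta_0)$ in place of $\Gm$ and a classical base point $a$ in place of $1$, I obtain $\psi_{u_\epsilon}(\CG)_0\simeq\CG_a$ — canonically up to a path — for every classical $a\in D^\times(\eta_0)$, compatibly with $\mathrm{sp}$. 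Finally, for classical $a\in D^\times(\eta)$ with $\eta\le\eta_0$: Remark \ref{rmk_huber} on $D(\epsilon)$ with the single function $z-a$ gives $\CG_a\simeq R\Gamma(\{|z-a|\le\eta'\},\CG)=R\Gamma(\{|f-a|\le\eta',\,|f_i|\le\epsilon\},\CF)$ for $\eta'$ small, while Theorem \ref{thm_huber2}(1) on the affinoid $N_\epsilon$ with the single function $f-a$ — whose vanishing locus is $U_{\epsilon,a}=\{f=a\}\cap N_\epsilon$ — shows that $R\Gamma(\{|f-a|\le\eta'\}\cap N_\epsilon,\CF)\to R\Gamma(U_{\epsilon,a},\CF)$ is an isomorphism for $\eta'$ small. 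Composing the chain yields $\psi_f(\CF)_x\cong R\Gamma(U_{\epsilon,a},\CF)$. The statement for $\phi_f(\CF)_x$ then follows by taking stalks at $x$ of the $\mathrm{can}$ triangle of Definition \ref{def_sp_can_var} and checking, via Remark \ref{rmk_huber}, that $\mathrm{sp}_x\colon\CF_x\simeq R\Gamma(N_\epsilon,\CF)\to\psi_f(\CF)_x$ corresponds under the above isomorphisms to the restriction map to $U_{\epsilon,a}$.

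The main obstacle is the reduction to dimension one: once one has $\CG=Rf_{\epsilon*}(\CF|_{N_\epsilon})$ on the disc, the whole argument rests on Huber's two finiteness inputs — preservation of ocqc under $Rf_{\epsilon*}$ over a $1$-dimensional base, and the fact that ocqc sheaves on a curve are local systems near a classical point. Two points need care: $\psi$ commutes with the \emph{non-proper} pushforward $f_\epsilon$ only because the target is a disc, which is precisely what Lemma \ref{lem_smoothqcqs}(2) supplies; and one must keep track of the nested radii $\epsilon\gg\eta_0\ge\eta>|a|\gg\eta'$, verifying at each stage that the relevant Huber comparison is valid for the small parameter in play — the non-quasi-compactness of $X^\times$, and the consequent sensitivity to the order of colimits, being exactly the subtlety flagged in the remark after Definition \ref{def_van}.
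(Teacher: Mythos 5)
Your argument tracks the paper's proof step by step: reduce to local sections of $\psi_f(\CF)$ on the tube via Remark \ref{rmk_huber}, push forward along the qcqs map to the disc via Lemma \ref{lem_smoothqcqs}(2), invoke Huber finiteness (Theorem \ref{thm_huber1}, Example \ref{ex_huber}(2)) to get a local system on a small punctured disc, and conclude with the disc computation from Example \ref{ex_phicomp}(2). Your elaboration of the final identification $\CG_a\cong R\Gamma(U_{\epsilon,a},\CF)$ — which the paper asserts without comment — is welcome, though the appeal to Remark \ref{rmk_huber} on $D(\epsilon)$ is a slight mis-cite (that remark is stated for Zariski-constructible sheaves, while $\CG$ is only ocqc); the step follows directly from Theorem \ref{thm_huber2}(1) applied to $N_\epsilon$ with the function $f-a$ together with the colimit description of the stalk $\CG_a$, so the argument stands.
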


\begin{proof}
    We prove for $\psi_f$, the result for $\phi_f$ follows from a similar argument. As $\psi_f(\CF)$ is Zariski-constructible (Proposition \ref{prop_phipreserveszc}), by Remark \ref{rmk_huber} there exists $\epsilon_0\in|K^\X|$ such that for every $0<\epsilon<\epsilon_0$ with $\epsilon\in|K^\X|$, we have the restriction isomorphism $\psi_f(\CF)_x\isoot R\Gamma(U_{\epsilon,0},\psi_f(\CF))$. By Lemma \ref{lem_smoothqcqs}.2, $R\Gamma(U_{\epsilon,0},\psi_f(\CF))\simeq\psi_{id}(f_*(\CF|_{U_\epsilon}))$, where $U_\epsilon:=\{|f_i|\leq \epsilon, \forall\,i\}\subseteq X$. By Theorem \ref{thm_huber1} and Example \ref{ex_huber}.2, $f_*(\CF|_{U_\epsilon})$ is locally constant on some small punctured disc $D^\X(\eta)$, so, once a (classical) base point $a\in D^\X(\eta)(K)$ is chosen, Example \ref{ex_phicomp}.2 applies\footnote{In Example \ref{ex_phicomp}.2 we only discussed the case when $\CL$ is in degree 0 and lives on $\Gm$. For a general local system $\CL$ on $\Gm$, a similar argument gives an isomorphism $\psi_{id}(\CL)\cong \CL_1$ without canonicity and monodromy assertions. For a general local system $\CL$ on a punctured disc, one can apply the same arguments to a sufficiently small punctured disc (by \cite{lutkebohmert_riemanns_1993}, $\CL$ is trivialised by a Kummer covering in a sufficiently small punctured disc) and obtain an isomorphism $\psi_{id}(\CL)\cong \CL_a$. This suffice for our purpose here. Canonicity and monodromy assertions can make sense but requires developing the notion of monodromic sheaves; this will be done in \cite{zhou_microlocal_2025}.} and gives an isomorphism $\psi_{id}(f_*(\CF|_{U_\epsilon}))\cong (f_*(\CF|_{U_\epsilon}))|_a\simeq R\Gamma(U_{\epsilon,a},\CF)$. Combined, we get $\psi_f(\CF)_x\isoot R\Gamma(U_{\epsilon,a},\CF)$.
\end{proof}

\begin{remark}
    (1) The same method proves that, given $(\epsilon_0,\epsilon,\eta_0,\eta)$ as above, there exists $\epsilon_0'\in|K^\X|$ such that for every $0<\epsilon'<\epsilon_0'$ with $\epsilon'\in|K^\times|$, we have $R\Gamma(U_{\epsilon,a,\leq\epsilon'}, \CF)\isoto R\Gamma(U_{\epsilon, \eta}, \CF)\cong\psi_f(\CF)_x$, where $U_{\epsilon,\eta,\leq\epsilon'}:=\{|f-a|\leq\epsilon',|f_i|\leq\epsilon,\forall i\}$ is the “Milnor tube”.\\
    (2) In the algebraic context, the Milnor fibre is $\CX_{(x),\bar{\eta}}=\CX_{(x)}\times_{\AAA^1_{(0)}}\bar{\eta}$, and $\psi_f(\CF)_x\simeq R\Gamma(\CX_{(x),\bar{\eta}},\CF)$ essentially by definition. In the complex analytic context, the analogue of this Proposition is non-trivial and well-known, see for example, \cite[248, 371]{schurmann_topology_2003}.
\end{remark}

\section{Beilinson's construction and gluing}\label{sec_beilinson_gluing}

In this section, we discuss Beilinson's construction of the unipotent nearby and vanishing cycles and gluing, in our context. We first prove some preparational results on perverse sheaves, including a General Artin-Grothendieck Vanishing. The set-up is as in §\ref{sec_intro}, $\Lambda=\FF_{\ell^r}$ or $\ZZ/\ell^r$, with $\ell\neq p$.

\begin{definition} [\textit{c.f.} {\cite[\nopp 3.4]{hansen_vanishing_2020}}]
    A rigid analytic variety $X$ is called \underline{weakly Stein} if there exist affinoid opens $U_0\subseteq U_1\subseteq ...\subseteq X$, $i\in\NN$, such that $X=\cup_i U_i$.
\end{definition}

\begin{lemma}[\textit{c.f.} {\cite[\nopp 3.4]{hansen_vanishing_2020}}]\label{lem_wSteinartinvan}
    Let $X$ be a weakly Stein rigid analytic variety, and $\CF\in \mathrm{Sh}_{zc}(X)$. Then $R\Gamma(X,\CF)\in D^{[0,\mathrm{dim}(X)]}(\Lambda)$.
\end{lemma}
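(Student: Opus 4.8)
The plan is to reduce to the affinoid case — where Artin--Grothendieck vanishing for Zariski-constructible coefficients is already known — and then bound the cohomological amplitude of the resulting inverse limit. First I would dispose of the trivial parts: since $\CF$ sits in degree $0$ we have $R\Gamma(X,\CF)\in D^{\geq0}(\Lambda)$ for free, and if $\dim X=\infty$ there is nothing more to prove, so set $d:=\dim X<\infty$. Write $X=\bigcup_i U_i$ with $U_0\subseteq U_1\subseteq\cdots$ affinoid opens as in the definition. Restriction to an open subspace preserves Zariski-constructibility and $\dim U_i\leq d$, so each $\CF|_{U_i}$ is a Zariski-constructible sheaf on an affinoid of dimension $\leq d$; by the affinoid Artin--Grothendieck vanishing for such sheaves (\cite{bhatt_six_2022}, building on \cite{hansen_vanishing_2020}) we get $R\Gamma(U_i,\CF)\in D^{[0,d]}(\Lambda)$ for every $i$.

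Next, $R\Gamma(X,\CF)\simeq R\varprojlim_i R\Gamma(U_i,\CF)$ — standard for an increasing exhaustion by opens, and of the kind already used in the proof of Proposition \ref{prop_phipreserveszc}. Since $R\varprojlim$ along a countable tower has cohomological amplitude $[0,1]$, the Milnor sequence gives $H^n(X,\CF)=0$ for $n>d+1$ and $H^{d+1}(X,\CF)\simeq\varprojlim^1_i H^d(U_i,\CF)$. So the entire statement reduces to the vanishing of this single $\varprojlim^1$.

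This is the main point. It follows from the Mittag--Leffler property of the tower $\{H^d(U_i,\CF)\}_i$, and the one genuine external input is finiteness: the \etale cohomology of a Zariski-constructible sheaf on an affinoid over $K$ has finite $\Lambda$-modules in each degree (\cite{bhatt_six_2022}; see also the finiteness results of \cite{huber_finiteness_1998}), so each $H^d(U_i,\CF)$ is a finite $\Lambda$-module — in particular a finite set, as $\Lambda$ is a finite ring. Hence the descending chain of images $\mathrm{Im}\big(H^d(U_j,\CF)\to H^d(U_i,\CF)\big)_{j\geq i}$ stabilises for each $i$, the tower is Mittag--Leffler, $\varprojlim^1_i H^d(U_i,\CF)=0$, and therefore $R\Gamma(X,\CF)\in D^{[0,d]}(\Lambda)$. (If one wished to avoid the finiteness input, the alternative would be to show the transition maps $H^d(U_{i+1},\CF)\to H^d(U_i,\CF)$ are surjective directly, via the local-cohomology triangle $R\Gamma_{U_{i+1}\setminus U_i}(U_{i+1},\CF)\to R\Gamma(U_{i+1},\CF)\to R\Gamma(U_i,\CF)$; but this requires controlling $H^{d+1}$ with supports along the merely-closed, non-Zariski-closed subset $U_{i+1}\setminus U_i$, which looks harder, so the finiteness route is the one I would take.)
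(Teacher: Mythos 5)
Your argument is exactly the paper's: reduce to the affinoid case via affinoid Artin--Grothendieck vanishing, pass to $R\varprojlim$ over an increasing exhaustion by affinoid opens, and kill the potential $\varprojlim^1$ contribution in degree $\dim X+1$ via Mittag--Leffler, using finiteness of the cohomology groups (the paper cites Huber for this and Bhatt--Mathew for the affinoid vanishing, but these are the same inputs you invoke). The proof is correct.
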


\begin{proof}
    By Affinoid Artin-Grothendieck Vanishing (\cite[\nopp 7.3]{bhatt_arc_2021}), the result holds for $X$  affinoid. Write $X=\cup_i U_i$ as an increasing union of affinoid opens. Then $R\Gamma(X,\CF)\simeq R\varprojlim_i R\Gamma(U_i,\CF)\in  D^{[0,\mathrm{dim}(X)+1]}(\Lambda)$. The “$\mathrm{dim}(X)+1$” comes from possible non-vanishing of $R^1\varprojlim$. But each $R^q\Gamma(U_i,\CF)$ is in fact finite (\cite[\nopp 3.1]{huber_finiteness_1998}), so the system is Mittag-Leffler and $R^1\varprojlim$ vanishes.
\end{proof}

\begin{lemma}[\textit{c.f.} {\cite[\nopp 4.1.5, 4.1.6]{BBDG}}]\label{lem_perversecriterion}
    Let $X$ be a rigid analytic variety, and $\CF\in\DX$. Then the following are equivalent:\\
    (1) $\CF\in$ $^p\!D^{\leq 0}_{zc}(X)$,\\
    (2) for every affinoid open $U\subseteq X$, have $R\Gamma(U,\CF)\in D^{\leq 0}(\Lambda)$,\\
    (3) for every weakly Stein open $U\subseteq X$, have $R\Gamma(U,\CF)\in D^{\leq 0}(\Lambda)$.
\end{lemma}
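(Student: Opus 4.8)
The plan is to prove the implications $(1)\Rightarrow(2)$, $(2)\Rightarrow(3)$, $(3)\Rightarrow(2)$, and $(2)\Rightarrow(1)$, which together close the loop; since $(3)\Rightarrow(2)$ is trivial (an affinoid open is its own constant weakly Stein exhaustion), the real content is $(1)\Rightarrow(2)$, $(2)\Rightarrow(3)$, and the reverse direction $(2)\Rightarrow(1)$, the last being where the difficulty lies. Throughout, the ambient finiteness inputs are Affinoid Artin--Grothendieck Vanishing \cite[7.3]{bhatt_arc_2021} and Huber's finiteness \cite[3.1]{huber_finiteness_1998}; the shape of the argument follows \cite[4.1.5--4.1.6]{BBDG}, but the reverse direction needs an extra ingredient special to the non-archimedean setting.

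For $(1)\Rightarrow(2)$: fix an affinoid open $U$, so $\CF|_U$ is bounded; each $\CH^k(\CF)$ is supported on a Zariski-closed set of dimension $\le -k$, so Affinoid Artin--Grothendieck Vanishing gives $R\Gamma(U,\CH^k(\CF)[-k]|_U)\in D^{[k,0]}\subseteq D^{\le 0}(\Lambda)$, and a finite dévissage along the Postnikov tower of $\CF|_U$ yields $R\Gamma(U,\CF)\in D^{\le 0}(\Lambda)$. (We shall reuse the shifted statement: $\CG\in{}^p\!D^{\le m}_{zc}(X)$ forces $R\Gamma(U,\CG)\in D^{\le m}$ for $U$ affinoid.) For $(2)\Rightarrow(3)$: this is the argument of Lemma \ref{lem_wSteinartinvan} --- write the weakly Stein $U$ as $\bigcup_i U_i$ with $U_i$ affinoid, so $R\Gamma(U,\CF)\simeq R\varprojlim_i R\Gamma(U_i,\CF)$ with terms in $D^{\le 0}$; the only possible defect is $R^1\varprojlim\{H^0(U_i,\CF)\}_i$ in degree $1$, but those $H^0$'s are finite $\Lambda$-modules by \cite[3.1]{huber_finiteness_1998}, so the system is Mittag--Leffler and $R^1\varprojlim$ vanishes.

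For $(2)\Rightarrow(1)$ --- the main point --- argue by contraposition. Zariski-constructibility and membership in ${}^p\!D^{\le 0}_{zc}$ are local \cite[3.5]{bhatt_six_2022}, so assume $X$ affinoid and $\CF$ bounded, and suppose $\CF\notin{}^p\!D^{\le 0}_{zc}(X)$. Let $a\ge 1$ be the largest integer with ${}^pH^a(\CF)\ne 0$, put $P:={}^pH^a(\CF)$, and use the perverse truncation triangle ${}^p\tau_{\le a-1}\CF\to\CF\to P[-a]\xrightarrow{+1}$; since ${}^p\tau_{\le a-1}\CF\in{}^p\!D^{\le a-1}_{zc}$, the shifted form of $(1)\Rightarrow(2)$ gives $R\Gamma(U,{}^p\tau_{\le a-1}\CF)\in D^{\le a-1}$ for every affinoid open $U$, whence (degrees $a$ and $a+1$ both exceed $a-1$) the long exact sequence yields $H^a(U,\CF)\xrightarrow{\ \sim\ }H^0(U,P)$. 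As $a\ge 1$, it now suffices to find an affinoid open $U$ with $H^0(U,P)\ne 0$. Choose an irreducible component $Z$ of $\operatorname{supp}P$ of maximal dimension $d$; shrinking $X$ Zariski-open (using generic smoothness, generic local constancy of Zariski-constructible sheaves, and \cite{conrad_irreducible_1999}) we may assume $Z$ is smooth, $\operatorname{supp}P=Z$, and $P|_Z=\CL[d]$ for a nonzero local system $\CL$. Then $P\simeq i_*(\CL[d])$ for the closed immersion $i\colon Z\hookrightarrow X$, so for an affinoid open $U$ with $U\cap Z=W$ one has $H^0(U,P)=H^d(W,\CL|_W)$; everything thus reduces to the following \emph{converse Artin vanishing}: for the smooth $d$-dimensional $Z$ and the nonzero local system $\CL$, there exists an affinoid open $W\subseteq Z$ with $H^d(W,\CL|_W)\ne 0$.

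This converse vanishing is the one genuinely delicate step, and it is here that the non-archimedean setting departs from the complex-analytic one: a \emph{small} polydisc neighbourhood of a point has trivial higher cohomology, so one cannot detect failure of perversity on arbitrarily small opens and must instead build a ``drilled'' affinoid --- a disc with several holes, and fibrations of such --- with nonvanishing top-degree cohomology, the nonvanishing coming from an Euler-characteristic inequality rather than a local computation. Concretely, I would prove it by induction on $d$: by \cite{lutkebohmert_riemanns_1993} we may assume $\CL$ is tame (trivialised by a Kummer cover on a small $W$); for $d=1$, take $W$ to be a smooth affinoid curve obtained from a disc by removing $k\ge 2$ disjoint closed subdiscs, and invoke the Grothendieck--Ogg--Shafarevich formula for rigid curves (\cite{huber_swan_2001}; no Swan term, by tameness) to get $h^0(W,\CL|_W)-h^1(W,\CL|_W)=\operatorname{rank}(\CL)\cdot(1-k)<0$, forcing $H^1(W,\CL|_W)\ne 0$; for $d\ge 2$, restrict $\CL$ to a smooth divisor on which it stays nonzero, apply the inductive hypothesis there, and thicken via a Leray/Künneth argument along a fibration with $1$-dimensional fibres of the drilled type (the relevant higher direct image being again a tame local system by Huber's finiteness, Theorem \ref{thm_huber1}). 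The remaining steps --- the two forward implications and the perverse-truncation reduction --- are routine dévissage combined with Affinoid Artin--Grothendieck Vanishing, Huber finiteness, and Mittag--Leffler bookkeeping, so the Euler-characteristic ingredient is the real obstacle.
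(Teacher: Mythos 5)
Your forward implications $(1)\Rightarrow(2)\Rightarrow(3)\Rightarrow(2)$ coincide with the paper's (Affinoid Artin--Grothendieck vanishing, the Mittag--Leffler argument of Lemma \ref{lem_wSteinartinvan}, triviality). The real divergence is in $(2)\Rightarrow(1)$, and there you take a genuinely different route from the paper. The paper imports the BBDG 4.1.7--4.1.8 dévissage almost verbatim: one shows that condition $(2)$ --- with \emph{affinoid} replaced by \emph{weakly Stein} wherever a divisor complement appears, and using Lemma \ref{lem_stratification} for the stratification --- is inherited, after a shift by the codimension, under $i^!$-restriction along a flag of smooth Zariski-closed subspaces through a classical point; absolute purity at the terminal point of the flag then converts the $i^!$-bound into the desired stalk bound. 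No Euler-characteristic input is needed. Your plan instead reduces to a concrete \emph{converse Artin vanishing} --- for a nonzero local system $\CL$ on a smooth $d$-dimensional $Z$, exhibit an affinoid $W\subset Z$ with $H^d(W,\CL)\neq 0$ --- and proves it by Grothendieck--Ogg--Shafarevich bookkeeping and induction on $d$. This is a more hands-on and arguably more illuminating way to see why the criterion holds, and your reduction to the converse vanishing (isolating $P={}^pH^a\CF$, shrinking so $P\simeq i_*(\CL[d])$, identifying $H^a(U,\CF)\cong H^0(U,P)$ via the perverse truncation triangle) is correct. The $d=1$ step is also sound, modulo checking that \cite{huber_swan_2001} gives the Euler characteristic formula in the form $\chi(W,\CL)=\operatorname{rank}(\CL)\cdot\chi(W,\Lambda)$ for tame $\CL$ on a drilled disc --- and modulo the small slip that one should remove \emph{open} subdiscs, not closed ones, so that $W$ is actually affinoid.

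The genuine gap is in the inductive step for $d\geq 2$. You fibre $W$ over a $(d-1)$-dimensional base with $1$-dimensional drilled fibres and cite Theorem \ref{thm_huber1} to control $R^1\pi_*\CL$. But that theorem requires the \emph{base} of the map to have dimension $\leq 1$, so it is simply inapplicable once $d\geq 3$. Even for $d=2$, the map $\pi$ is not proper, so there is no base change identifying the stalks of $R^1\pi_*\CL$ with $H^1$ of the fibres; showing that $R^1\pi_*\CL$ is a nonzero local system (rather than merely ocqc) therefore needs a separate argument, which your sketch does not supply. A repair within your framework would be to replace the Leray step by an honest Künneth computation: shrink to a drilled polydisc $W=W_1\times\cdots\times W_d$ inside $Z$, trivialize $\CL|_W$ by the Kummer cover $\prod(z_i\mapsto z_i^{n_i})$ (which respects the product structure), use that its deck group $\prod\mu_{n_i}$ is abelian to decompose $\CL|_W$ into a direct sum of external tensor products of rank-one tame local systems on the factors, and then apply the $d=1$ case factor by factor together with a Künneth formula for étale cohomology of products of affinoids. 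This sidesteps the higher-direct-image difficulty entirely, but it is not what is written, and Künneth itself must be checked in this setting.
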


\begin{proof}
    The same argument as in Lemma \ref{lem_wSteinartinvan} shows (2)$\implies$(3). (3)$\implies$(2) is clear. We now show (1)$\implies$(2) and (2)$\implies$(1).\\ 
    (1)$\implies$(2): it suffices to show this for $X$ affinoid and $U=X$. We do induction on the amplitude (with respect to the usual t-structure) of $\CF$. If $\CF=0$, this is clear. For a general $\CF$, suppose $a$ is the smallest integer such that $\CH^a(\CF)\neq0$. Consider $\tau_{\leq a}\CF\rightarrow\CF\rightarrow\tau_{>a}\CF\rightarrow$, note each term is in $^p\!D^{\leq 0}_{zc}(X)$. Take cohomology: $R\Gamma(X,\tau_{\leq a}\CF)\rightarrow R\Gamma(X,\CF)\rightarrow R\Gamma(X,\tau_{>a}\CF)\rightarrow$. As $\dim\mathrm{supp}(\tau_{\leq a}\CF)\leq -a$, by Affinoid Artin-Grothendieck Vanishing, we have $R\Gamma(X,\tau_{\leq a}\CF)\in D^{\leq0}(X)$. $R\Gamma(X,\tau_{>a}\CF)$ also lies in $D^{\leq0}(X)$ by induction hypothesis. So $R\Gamma(X,\CF)\in D^{\leq0}(X)$.\\\\
    (2)$\implies$(1): the original proof in \cite[\nopp 4.1.7, 4.1.8]{BBDG} applies in our context, except: (a) in the statement of Lemme 4.1.7 in \textit{loc. cit.}, replace the word “affine” in the third line by “weakly Stein”; (b) in the first paragraph of 4.1.8 in \textit{loc. cit.}, the existence of $F_i$'s in our context follows from Lemma \ref{lem_stratification}; (c) one uses Lemma \ref{lem_wSteinartinvan} as needed.
\end{proof}

\begin{lemma}\label{lem_stratification}
    Let $X$ be an affinoid rigid analytic variety over $K$ of dimension $d$, and $\CF\in\DX$. Then, there exist Zariski-closed subspaces $\varnothing=Z_{-1}\subseteq Z_0\subseteq...\subseteq Z_d=X$ such that each $Z_i-Z_{i-1}$ is smooth, of pure dimension $i$, and $\CF$ is lisse on $Z_i-Z_{i-1}$.
\end{lemma}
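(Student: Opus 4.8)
\textit{Plan.} The assertion only concerns the topological space $|X|$, the \etale topos of $X$, the poset of Zariski-closed subsets of $X$ and their dimensions, none of which change on passing to $X_{\mathrm{red}}$; so I would first replace $X$ by $X_{\mathrm{red}}$ and assume $X$ reduced, and then induct on $d=\dim X$. For $d=0$, $X$ is a finite set of $K$-points (reduced, $0$-dimensional, $K$ algebraically closed), hence smooth of dimension $0$, and every object of $\DX$ is lisse on it; take $Z_0=X$.

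For $d\geq 1$ the plan is to peel off one top-dimensional smooth open stratum at a time. This needs two inputs. First, $\CF$ is lisse on a Zariski-open $V\subseteq X$ with $\dim(X-V)<d$: by the definition of Zariski-constructibility together with the quasi-compactness of $X$ there is a \emph{finite} stratification of $X$ into Zariski-locally-closed subsets on each of which $\CF$ is lisse; after refining so that the closure of each stratum is a union of strata, the union $Y'$ of the closures of the strata of dimension $<d$ is Zariski-closed of dimension $<d$, and $\CF$ is lisse on the Zariski-open $X-Y'$, which is covered by the strata of dimension $d$ (each now open in $X-Y'$) and on which lisse-ness is therefore local. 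Second, generic smoothness: the non-smooth locus $X^{\mathrm{sing}}$ of the reduced rigid variety $X$ is Zariski-closed and nowhere dense — this follows from resolution of singularities \cite[1.1.11, 1.1.13.(i)]{temkin_functorial_2018}, since a resolution is an isomorphism over a dense Zariski-open which is therefore smooth, or from generic smoothness as used in \cite{bhatt_six_2022}; and the union $X^{<d}$ of the irreducible components of $X$ of dimension $<d$ is Zariski-closed (Conrad \cite{conrad_irreducible_1999}) of dimension $<d$. Now set $Z_d:=X$ and $Z_{d-1}:=(X-V)\cup X^{\mathrm{sing}}\cup X^{<d}$; this is Zariski-closed with $\dim Z_{d-1}<d$, and $Z_d-Z_{d-1}$ is a Zariski-open subspace of $X$ contained in the smooth locus and in the union of the $d$-dimensional components, hence (being open in a pure $d$-dimensional space and inside the smooth locus) smooth of pure dimension $d$; moreover $\CF$ is lisse on it because it lies inside $V$.

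Finally, $Z_{d-1}$ is a Zariski-closed subspace of the affinoid $X$, hence affinoid, of dimension $<d$, and $\CF|_{Z_{d-1}}\in\Dbzc(Z_{d-1})$ (restriction to a Zariski-closed subspace preserves Zariski-constructibility — immediate from the definition, or because $i^*$ does \cite{bhatt_six_2022}); so I would apply the induction hypothesis to $(Z_{d-1})_{\mathrm{red}}$ and $\CF|_{Z_{d-1}}$ to get $\varnothing=Z_{-1}\subseteq Z_0\subseteq\cdots\subseteq Z_{d-1}$ with the required properties on the strata of dimension $\leq d-1$ (padding with $Z_i=(Z_{d-1})_{\mathrm{red}}$, whose corresponding strata are then empty, for $\dim Z_{d-1}<i\leq d-1$ if necessary), and splice with the top stratum. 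Since each $Z_i$ is Zariski-closed in $(Z_{d-1})_{\mathrm{red}}$ hence in $X$, and smoothness and lisse-ness are insensitive to passing to reductions, this yields the desired filtration. I expect the main obstacle to be securing the generic-smoothness input in the rigid setting — available because $K$ has characteristic $0$, via Temkin's resolution — together with the point-set bookkeeping in the first input, in particular the refinement of a finite stratification so that stratum-closures are unions of strata, which is what lets one extract a single dense Zariski-open on which $\CF$ is lisse.
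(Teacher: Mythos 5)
Your proof follows the same approach as the paper's: peel off a dense Zariski-open top-dimensional stratum on which $X$ is smooth and $\CF$ is lisse, using the fact that a union of irreducible components is Zariski-closed and that the smooth locus of a reduced rigid variety is dense Zariski-open (\cite{conrad_irreducible_1999}), then pass to the Zariski-closed complement and repeat. The only (cosmetic) difference is in the bookkeeping when $\dim Z_{d-1}$ drops by more than one: you pad the filtration with repeated $Z_i$'s so that the intermediate strata are empty, whereas the paper adjoins an arbitrary hypersurface in $X'$ to force $\dim Z_{d-1}=d-1$ so that every stratum is nonempty.
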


We will freely use the following facts: (1) every union of irreducible components of a rigid analytic variety is Zariski-closed (\cite[\nopp 2.2.4.3]{conrad_irreducible_1999}); (2) the non-smooth locus of a reduced rigid analytic variety over an algebraically closed field is Zariski-closed and nowhere dense (discussion after 3.3.1 in \textit{loc. cit.}).

\begin{proof}
    Let $X'$ be the union of dimension $d$ irreducible components of $X$. Its smooth locus $X'_{sm}$ is a dense Zariski open. As $\CF$ is Zariski-constructible, there exists a dense Zariski open $U\subseteq X'$ on which $\CF$ is lisse. Let $Z_{d-1}'=\{\text{irreducible components of $X$ not in $X'$}\}\cup\{X'-(U\cap X'_{sm})\}$. This is Zariski closed. If $\dim Z_{d-1}'=d-1$, set $Z_{d-1}=Z_{d-1}'$; otherwise, set $Z_{d-1}=Z_{d-1}'\cup H$, where $H$ is any hypersurface in $X'$ (which exists, as $X'$ is affinoid). Iterate. 
\end{proof}

We get an analogue of General Artin-Grothendieck Vanishing, this is \cite[\nopp 4.1.1]{BBDG} in the algebraic context, and \cite[\nopp 10.3.17]{kashiwara_sheaves_1990} in the complex analytic context.

\begin{proposition}[General Artin-Grothendieck Vanishing]\label{prop_artinvanishing}
    Let $f: X\rightarrow Y$ be a map of rigid analytic varieties such that for every affinoid open $V\subseteq Y$, $f^{-1}(V)$ is weakly Stein. Then, $f_*$ (resp. $f_!$) is perverse right (resp. left) t-exact, in the following sense: for every $\CF\in$ $^p\!D^{\leq 0}_{zc}(X)$ (resp. $^p\!D^{\geq 0}_{zc}(Y)$) such that $f_*\CF$ (resp. $f_!\CF$) lies in $D^{(b)}_{zc}(Y)$, we have $f_*\CF\in$ $^p\!D^{\leq 0}_{zc}(Y)$ (resp. $f_!\CF\in$ $^p\!D^{\geq 0}_{zc}(Y)$).
\end{proposition}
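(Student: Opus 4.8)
The plan is to reduce the statement to the cohomological criterion for perversity established in Lemma~\ref{lem_perversecriterion}, after which the proposition is essentially formal. The one general fact I will use repeatedly is that for an open immersion $j$ one has $j^*=j^!$, and that $j^*$ is perverse t-exact (supports of cohomology sheaves can only shrink), so restriction to an open subspace preserves both ${}^p\!D^{\leq 0}_{zc}$ and ${}^p\!D^{\geq 0}_{zc}$.

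First I would handle $f_*$. Let $\CF\in{}^p\!D^{\leq 0}_{zc}(X)$ with $f_*\CF\in\Dbzc(Y)$. By Lemma~\ref{lem_perversecriterion} applied to $Y$, it suffices to show $R\Gamma(V,f_*\CF)\in D^{\leq 0}(\Lambda)$ for every affinoid open $V\subseteq Y$. Now $R\Gamma(V,f_*\CF)\simeq R\Gamma(f^{-1}(V),\CF|_{f^{-1}(V)})$, the restriction $\CF|_{f^{-1}(V)}$ lies in ${}^p\!D^{\leq 0}_{zc}(f^{-1}(V))$ by the remark above, and $f^{-1}(V)$ is weakly Stein by hypothesis. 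Applying Lemma~\ref{lem_perversecriterion}(3) over the space $f^{-1}(V)$, with the weakly Stein open taken to be $f^{-1}(V)$ \emph{itself}, yields $R\Gamma(f^{-1}(V),\CF|_{f^{-1}(V)})\in D^{\leq 0}(\Lambda)$. This settles the case of $f_*$.

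For $f_!$ I would pass through Verdier duality from \cite{bhatt_six_2022}: $\DD$ is a contravariant autoequivalence of $\Dbzc$ interchanging ${}^p\!D^{\leq 0}_{zc}$ with ${}^p\!D^{\geq 0}_{zc}$, and there is a canonical isomorphism $\DD_Y\circ f_!\simeq f_*\circ\DD_X$ (it is $\RHom(f_!(-),\omega_Y)\simeq f_*\RHom(-,f^!\omega_Y)$ combined with $f^!\omega_Y\simeq\omega_X$). Given $\CF\in{}^p\!D^{\geq 0}_{zc}(X)$ with $f_!\CF\in\Dbzc(Y)$, we get $\DD_X\CF\in{}^p\!D^{\leq 0}_{zc}(X)$ and $f_*\DD_X\CF\simeq\DD_Y f_!\CF\in\Dbzc(Y)$, so the first part applies and gives $f_*\DD_X\CF\in{}^p\!D^{\leq 0}_{zc}(Y)$; applying $\DD_Y$ back, $f_!\CF\simeq\DD_Y f_*\DD_X\CF\in{}^p\!D^{\geq 0}_{zc}(Y)$. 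Alternatively, one can argue directly, using that ${}^p\!D^{\geq 0}_{zc}(Y)$ is cut out by the condition $R\Gamma_c(V,-)\in D^{\geq 0}(\Lambda)$ on affinoid opens $V$ (the Poincaré--Verdier dual of Lemma~\ref{lem_perversecriterion} and the affinoid vanishing), that $R\Gamma_c(V,f_!\CF)\simeq R\Gamma_c(f^{-1}(V),\CF|_{f^{-1}(V)})$, and that, since $f_!$ is a left adjoint, $R\Gamma_c$ turns an affinoid exhaustion of the weakly Stein space $f^{-1}(V)$ into a filtered colimit of terms each lying in $D^{\geq 0}(\Lambda)$.

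The substantive input has already been absorbed into Lemma~\ref{lem_wSteinartinvan} and Lemma~\ref{lem_perversecriterion}, so I do not expect a real obstacle here; the only point requiring care is the use of the six-functor package of \cite{bhatt_six_2022} for $f_!$ — that $\DD$ preserves $\Dbzc$, satisfies biduality, and intertwines $f_!$ with $f_*$ — which is where the rigid-analytic (rather than purely formal) content lives. In the direct approach, the analogous care goes into the $R\Gamma_c$-characterisation of ${}^p\!D^{\geq 0}_{zc}$ and into checking that compactly supported cohomology of a weakly Stein space is computed by an affinoid exhaustion.
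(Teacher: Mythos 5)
Your proof is correct and follows essentially the same route as the paper: reduce $f_*$ to the cohomological criterion of Lemma~\ref{lem_perversecriterion}, use that $f^{-1}(V)$ is weakly Stein, and deduce $f_!$ by Verdier duality. The only cosmetic difference is that you invoke part~(3) of Lemma~\ref{lem_perversecriterion} directly on the weakly Stein open $f^{-1}(V)$ after restricting $\CF$ there, whereas the paper applies part~(2) to each affinoid in an exhaustion of $f^{-1}(V)$ and redoes the $R\varprojlim$ argument from Lemma~\ref{lem_wSteinartinvan}; these are the same argument, packaged slightly differently. One caveat: the alternative $R\Gamma_c$ route you sketch at the end needs more care than indicated — the dual criterion for ${}^p\!D^{\geq 0}_{zc}$ via compactly supported cohomology on affinoids is not stated in the paper and would itself be obtained via duality, at which point it is no more elementary than the duality route you already gave; but since that paragraph is explicitly offered only as an alternative, it does not affect the correctness of your main argument.
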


\begin{proof}
    It suffices to prove for $f_*$, the result for $f_!$ follows by duality\footnote{Use (1) from the following formal fact (which we will use without repeated mention): let $f: X\rightarrow Y$ be a map of rigid analytic varieties, then there are canonical isomorphisms: (1) $f_*\DD\CF\simeq\DD f_!\CF$, $\forall\,\CF\in D(X)$; (2) $f_!\DD\CF\simeq\DD f_*\CF$, $\forall\,\CF\in D(X)$ such that $\CF$ and $f_!\DD\CF$ are reflexive (recall that a sheaf $\CH$ is \underline{reflexive} if the canonical map $\CH\rightarrow\DD\DD\CH$ is an isomorphism); (3) $f^*\DD\CG\simeq\DD f^!\CG$, $\forall\,\CG\in D(Y)$ reflexive; (4) $f^!\DD\CG\simeq\DD f^*\CG$, $\forall\,\CG\in D(Y)$.}. We first show $f_*\CF\in$ $^p\!D^{\leq 0}_{zc}(Y)$, by criterion Lemma \ref{lem_perversecriterion}.2, it suffices to show, for every affinoid open $U\subseteq Y$, that $R\Gamma(U,f_*\CF)\in D^{\leq 0}(\Lambda)$. Smooth base change implies $R\Gamma(U,f_*\CF)\simeq R\Gamma(f^{-1}U,\CF)$. By assumption, we may write $f^{-1}(U)$ as an increasing union of affinoid opens $V_i\subseteq X$. We have $R\Gamma(V_i,\CF)\in D^{\leq0}$ by Lemma \ref{lem_perversecriterion}.2. Then, a similar $R\varprojlim$ argument as in Lemma \ref{lem_wSteinartinvan} shows $R\Gamma(f^{-1}U,\CF)\in D^{\leq0}$.
\end{proof}

\begin{example}\label{ex_j_exact}
    Let $f: X\rightarrow\AAA^1$ be a map of rigid analytic varieties, then $j: X^\X\rightarrow X$ satisfies the condition in Proposition \ref{prop_artinvanishing}. Indeed, for every affinoid open $U=\Spa(A)\subseteq X$, $j^{-1}(U)$ is the increasing union of the affinoids $\{x\in\Spa(A)\,|\,|f|\geq\epsilon\}$ as $\epsilon\in|K^\X|$ goes to $0$. So $j_*$ (resp. $j_!$) is perverse right (resp. left) t-exact, hence perverse t-exact (combining the other direction proved in \cite[\nopp 4.2.2]{bhatt_six_2022}).
\end{example}

\begin{terminology}\label{terminology_extendable}
    Let $f: X\rightarrow\AAA^1$ be a map of rigid analytic varieties. We call an $\CF\in \Perv(X^\X)$ \underline{extendable} if $\CF$ can be extended to an object of $\Dbzc(X)$. The full subcategory of extendable objects is denoted by $\Perv_{ext}(X^\X)$. Similarly for $\CF\in D^{(b)}_{zc}(X^\X)$.
\end{terminology}

We thus obtain the following analogue of \cite[\nopp 4.1.10, 4.1.12]{BBDG}, proved in the same way.

\begin{corollary}\label{cor_4110}
    Let $f: X\rightarrow\AAA^1$ be a map of rigid analytic varieties. Denote $j: X^\X\hookrightarrow X$, $i: X_0\hookrightarrow X$. Let $\CF$ (resp. $\CG$) be a perverse sheaf on $X$ (resp. an extendable perverse sheaf on $X^\X$). Then,\\
    (1) $i^*\CF\in$ $^p\!D^{[-1,0]}_{zc}(X_0)$, $i^!\CF\in$ $^p\!D^{[0,1]}_{zc}(X_0)$, and we have the following exact sequences:
    \begin{align*}
    0\rightarrow i_*\,^p\CH^{-1}i^*\CF\rightarrow j_!j^*\CF\rightarrow\CF\rightarrow i_*\,^p\CH^0i^*\CF\rightarrow0\\
    0\rightarrow i_*\,^p\CH^{0}i^!\CF\rightarrow \CF\rightarrow j_*j^*\CF\rightarrow i_*\,^p\CH^1i^!\CF\rightarrow0
    \end{align*}
    (2) $i^*j_{!*}\CG[-1]\simeq\,^p\CH^{-1}i^*j_*\CG\simeq\,^p\CH^0i^!j_!\CG$ and $i^!j_{!*}\CG[1]\simeq\,^p\CH^0i^*j_*\CG\simeq\,^p\CH^1i^!j_!\CG$. In particular, they are perverse. 
\end{corollary}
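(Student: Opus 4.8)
The plan is to deduce everything formally from three ingredients, exactly as in \cite[\nopp 4.1.10, 4.1.12]{BBDG}: the perverse $t$-exactness of $j_!$ and $j_*$ for this particular $j$ (Example \ref{ex_j_exact}); the standard properties of the closed immersion $i$ (namely $i_*$ is perverse $t$-exact and fully faithful, $i^*$ is perverse right $t$-exact, $i^!$ is perverse left $t$-exact); and the two recollement triangles $j_!j^*\to\mathrm{id}\to i_*i^*\to$ and $i_*i^!\to\mathrm{id}\to j_*j^*\to$, together with the long exact sequence of perverse cohomology $^p\CH^\bullet$ attached to a distinguished triangle. Before starting I would record the one non-formal point, that the auxiliary sheaves stay in $\Dbzc(X)$: $j_!$ preserves Zariski-constructibility for trivial reasons, and if $\CG$ is extendable with extension $\widetilde{\CG}\in\Dbzc(X)$, then $j_*\CG=j_*j^*\widetilde{\CG}$ is the cone of a map between the Zariski-constructible sheaves $i_*i^!\widetilde{\CG}$ and $\widetilde{\CG}$, hence lies in $\Dbzc(X)$ (using that $i^!$ preserves Zariski-constructibility); the same remark applies to $j_*j^*\CF$ for $\CF\in\Perv(X)$.

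For part (1): since $j$ is an open immersion, $j^*\CF$ is perverse, so $j_!j^*\CF$ and $j_*j^*\CF$ are perverse by Example \ref{ex_j_exact}. Applying $i^*$ (resp.\ $i^!$) to the recollement triangles identifies $i_*i^*\CF\simeq\cone(j_!j^*\CF\to\CF)$ and $i_*i^!\CF\simeq\fib(\CF\to j_*j^*\CF)$. I would then run the perverse-cohomology long exact sequences of these two triangles; since the outer terms are perverse, this immediately gives $i_*i^*\CF\in{}^p\!D^{[-1,0]}_{zc}$ and $i_*i^!\CF\in{}^p\!D^{[0,1]}_{zc}$, hence $i^*\CF\in{}^p\!D^{[-1,0]}_{zc}(X_0)$ and $i^!\CF\in{}^p\!D^{[0,1]}_{zc}(X_0)$ because $i_*$ is $t$-exact and fully faithful; the two displayed four-term exact sequences are precisely the nonzero portions of these long exact sequences, after using $i_*{}^p\CH^k(i^*\CF)\simeq{}^p\CH^k(i_*i^*\CF)$ and likewise for $i^!$.

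For part (2): I would set $C:=\cone(j_!\CG\to j_*\CG)$. Applying $j^*$ turns the canonical map $j_!\CG\to j_*\CG$ into $\mathrm{id}_\CG$, so $j^*C=0$ and hence $C\simeq i_*i^*C\simeq i_*i^*j_*\CG$ (using $i^*j_!\CG=0$); on the other hand the recollement triangle applied to $j_!\CG$ reads $i_*i^!j_!\CG\to j_!\CG\to j_*j^*j_!\CG=j_*\CG\to$, so $i_*i^!j_!\CG\simeq C[-1]$, and therefore $i^*j_*\CG\simeq(i^!j_!\CG)[1]$. Since $j_!\CG$ and $j_*\CG$ are perverse, the long exact sequence of $j_!\CG\to j_*\CG\to C$ collapses to $0\to{}^p\CH^{-1}C\to j_!\CG\xrightarrow{\phi}j_*\CG\to{}^p\CH^0C\to0$ in $\Perv(X)$ with $^p\CH^kC=0$ otherwise, so $j_{!*}\CG=\mathrm{im}(\phi)$ sits in short exact sequences $0\to{}^p\CH^{-1}C\to j_!\CG\to j_{!*}\CG\to0$ and $0\to j_{!*}\CG\to j_*\CG\to{}^p\CH^0C\to0$ in $\Perv(X)$. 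Applying $i^*$ to $j_!\CG=j_!j^*j_{!*}\CG\to j_{!*}\CG$ gives $i_*i^*j_{!*}\CG\simeq\cone(j_!\CG\to j_{!*}\CG)\simeq({}^p\CH^{-1}C)[1]$, and applying $i^!$ to $j_{!*}\CG\to j_*j^*j_{!*}\CG=j_*\CG$ gives $i_*i^!j_{!*}\CG\simeq\fib(j_{!*}\CG\to j_*\CG)\simeq({}^p\CH^0C)[-1]$; unwinding with $C\simeq i_*i^*j_*\CG$ and the $t$-exactness of $i_*$ yields $i^*j_{!*}\CG[-1]\simeq{}^p\CH^{-1}(i^*j_*\CG)$ and $i^!j_{!*}\CG[1]\simeq{}^p\CH^0(i^*j_*\CG)$, and feeding in $i^*j_*\CG\simeq(i^!j_!\CG)[1]$ identifies these with $^p\CH^0(i^!j_!\CG)$ and $^p\CH^1(i^!j_!\CG)$ respectively. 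All four objects are perverse cohomology objects, hence perverse.

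I do not expect a genuine obstacle: once Example \ref{ex_j_exact} is available the whole statement is formal manipulation of recollement triangles and perverse cohomology, word-for-word as in \cite[\nopp 4.1.10, 4.1.12]{BBDG}. The only things requiring attention are bookkeeping — checking that the connecting morphisms match up so that the displayed exact sequences and isomorphisms are the canonical ones — and keeping every auxiliary complex inside $\Dbzc$; the latter is precisely what the extendability hypothesis on $\CG$ and the remark in the first paragraph are there to handle.
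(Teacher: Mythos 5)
Your argument is correct and is precisely the BBDG argument the paper is invoking: the paper gives no separate proof of this corollary, saying only that it is "proved in the same way" as \cite[\nopp 4.1.10, 4.1.12]{BBDG}, and your write-up is a faithful transcription of that proof using Example \ref{ex_j_exact} as the input. The one remark I would add is that your careful bookkeeping of Zariski-constructibility in the first paragraph, while not wrong, is a bit more elaborate than needed: $j$ is a Zariski-open immersion, so $j_*$ already preserves $\Dbzc$ by the six-functor formalism of \cite{bhatt_six_2022}, with no appeal to an extension of $\CG$ required; the extendability hypothesis on $\CG$ is there so that $j_*\CG$, $j_!\CG$ are defined as objects of the bounded Zariski-constructible category in the first place, not to run a cone argument through $i^!$.
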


For the rest of this section, fix a topological generator $t$ of $\pi^{\mathrm{fin}}_1(\Gm,1)$. We revisit the decomposition $(-)\simeq (-)^P\oplus\!^P\!(-)$ in Lemma \ref{lem_iwasawa_basics}.3 and Remark \ref{rmk_t_non_t_decomposition}. Until the end of Corollary \ref{cor_u_nu_decomp}, we allow $\ell=p$. Given a finite $\Lambda$-module $M$ with a continuous $\pi^{\mathrm{fin}}_1(\Gm,1)$-action, consider the following alternative decomposition: by finiteness, $M$ (viewed as a $\Lambda[t]$-module) is supported at finitely many closed points of $\mathrm{Spec}(\Lambda[t])$, so has a canonical decomposition $\oplus_{\mathfrak{m}}M[\mathfrak{m}^\infty]\isoto M$, where $\mathfrak{m}$ ranges through all maximal ideals of $\Lambda[t]$, and $[\mathfrak{m}^\infty]$ denotes the $\mathfrak{m}^\infty$-torsion part. Let $M^u=$ $M[(t-1)^\infty]$ and $M^{nu}=\oplus_{\mathfrak{m}\neq (t-1)}M[\mathfrak{m}^\infty]$, we get $M\cong M^u\oplus M^{nu}$, where $t-1$ acts nilpotently on $M^u$ and invertibly on $M^{nu}$.

\begin{lemma}
    With notations as above, we have $M^P=M^u$ and $^P\!M=M^{nu}$.
\end{lemma}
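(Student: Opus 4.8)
The plan is to reduce the statement to elementary facts about the finite cyclic group $\mu_n$ through which the $\mu$-action on $M$ factors. Since $M$ is a finite $\Lambda$-module the action factors through such a $\mu_n$; write $n=\ell^a m$ with $a\geq 0$ and $\gcd(m,\ell)=1$, and put $u:=t^{\ell^a}$. Then $u^m=t^n$ acts trivially on $M$, and $m\in\Lambda^\times$. Under the surjection $\mu\twoheadrightarrow\mu_n\cong\ZZ/\ell^a\times\ZZ/m$, the subgroup $P=\Ker(\hat\ZZ(1)\to\ZZ_\ell(1))$ maps \emph{onto} the prime-to-$\ell$ factor $\{0\}\times\ZZ/m$, and the image of $u=t^{\ell^a}$ generates that factor (as $t$ maps to a generator of $\mu_n$ and $\ell^a$ is invertible mod $m$). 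Hence the image of $P$ in $\mathrm{Aut}_\Lambda(M)$ is the cyclic group generated by $u$, so $M^P=\Ker(u-1\colon M\to M)$, and the averaging projector $\kappa$ of Lemma~\ref{lem_iwasawa_basics}.3 equals, as an operator on $M$, $\tfrac1m\sum_{j=0}^{m-1}u^j$ (Haar measure on $P$ pushes to the normalised counting measure on its finite image), with $M^P=\operatorname{im}\kappa$ and $^P\!M=\Ker\kappa$. So it suffices to show $\Ker(u-1)=M^u$ and $\Ker\kappa=M^{nu}$.

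The link between $u-1=t^{\ell^a}-1$ and the operator $t-1$ defining $M^u,M^{nu}$ is the Frobenius identity $t^{\ell^a}-1=(t-1)^{\ell^a}$ in $\FF_\ell[t]$, which over $\Lambda$ reads $u-1=(t-1)^{\ell^a}+\ell\,g(t)$ for some $g\in\Lambda[t]$; the point is that $\ell$ is nilpotent in $\Lambda$, so $\ell\,g(t)$ is a nilpotent operator on $M$ commuting with $t$. Recall from the discussion preceding the lemma that $M=M^u\oplus M^{nu}$ is a decomposition into $\Lambda[t]$-submodules with $t-1$ nilpotent on $M^u$ and invertible on $M^{nu}$. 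I would then argue in two halves. On $M^u$, the operator $(t-1)^{\ell^a}$ is nilpotent, hence so is $u-1$; combined with $u^m-1=(u-1)\bigl(1+u+\dots+u^{m-1}\bigr)=0$ and the fact that $1+u+\dots+u^{m-1}=m+(u-1)(\cdots)$ is invertible on $M^u$ (it is $m\cdot\mathrm{id}$ plus a nilpotent, and $m\in\Lambda^\times$), this forces $(u-1)|_{M^u}=0$, i.e.\ $M^u\subseteq\Ker(u-1)=M^P$. On $M^{nu}$, the operator $t-1$, hence $(t-1)^{\ell^a}$, is invertible, so $u-1=(t-1)^{\ell^a}\bigl(\mathrm{id}+(t-1)^{-\ell^a}\ell\,g(t)\bigr)$ is invertible (identity plus a nilpotent); thus $\Ker(u-1)\cap M^{nu}=0$, i.e.\ $M^P\cap M^{nu}=0$. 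Since $M^P$ is a $\Lambda[t]$-submodule, $M^P=(M^P\cap M^u)\oplus(M^P\cap M^{nu})$, which by the two inclusions just established equals $M^u$; this gives $M^P=M^u$.

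For $^P\!M=M^{nu}$, I would observe that $\kappa$ is $\Lambda[t]$-linear, hence preserves the canonical decomposition $M=M^u\oplus M^{nu}$; as $\operatorname{im}\kappa=M^P=M^u$, this gives $\kappa(M^{nu})\subseteq M^{nu}\cap M^u=0$, so $M^{nu}\subseteq\Ker\kappa={}^P\!M$. Then from $M=M^u\oplus{}^P\!M=M^u\oplus M^{nu}$ with $M^{nu}\subseteq{}^P\!M$ one concludes ${}^P\!M=M^{nu}$ in one line (if $x\in{}^P\!M$, write $x=y+z$ with $y\in M^u$, $z\in M^{nu}$ via the second decomposition; then $y=x-z\in{}^P\!M$, so $y\in M^u\cap{}^P\!M=0$ and $x=z\in M^{nu}$).

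I do not anticipate a real obstacle: this is a self-contained piece of commutative algebra, and nothing in it uses $\ell\neq p$ (consistent with the convention in force). The only thing requiring care—rather than difficulty—is uniformity in $\Lambda=\FF_{\ell^r}$ versus $\Lambda=\ZZ/\ell^r$: the Frobenius identity is an equality only after reducing mod $\ell$, so in both halves of the argument one must systematically use the nilpotence of $\ell$ on $M$ to absorb the error term $\ell\,g(t)$ into a factor of the form $\mathrm{id}+(\text{nilpotent})$.
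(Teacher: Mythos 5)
Your proof is correct but takes a genuinely different route from the paper's. The paper proves nilpotence of $t-1$ on $M^P$ via the Iwasawa-algebra isomorphism $\Lambda[[\ZZ_\ell(1)]]\cong\Lambda[[\gamma]]$: finiteness gives $\gamma^i(1-\gamma^s)x=0$ for each $x\in M^P$, and $1-\gamma^s$ is a unit of $\Lambda[[\gamma]]$; it then obtains invertibility of $t-1$ on ${}^P\!M$ by a short coinvariance trick ($({}^P\!M)_P=({}^P\!M)^P=0$ forces $({}^P\!M)_\mu=({}^P\!M)^\mu=0$). You instead pass to the finite quotient $\mu_n\cong\ZZ/\ell^a\times\ZZ/m$ through which the action factors, identify $M^P$ with $\Ker(u-1)$ for $u=t^{\ell^a}$, and compare $u-1$ with $t-1$ via the mod-$\ell$ Frobenius identity $t^{\ell^a}-1\equiv(t-1)^{\ell^a}$, absorbing the $\ell$-error term into a commuting nilpotent. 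Both arguments are valid and comparable in length; yours is more hands-on (reduces everything to finite linear algebra over $\Lambda[t]$), while the paper's leans on the Iwasawa-algebra formalism already in place, which compresses the nilpotence half to essentially one line. One small point worth spelling out if you write this up: the step $M^P=(M^P\cap M^u)\oplus(M^P\cap M^{nu})$ uses that $M^P$ is a $\Lambda[t]$-submodule (true because $\mu$ is abelian, so $P$ and $t$ commute) and that $\Lambda[t]$-submodules of a finite module inherit the canonical $\mathfrak{m}$-primary decomposition.
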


\begin{proof}
    We will show $t-1$ acts nilpotently on $M^P$ and invertibly on $^P\!M$. It is then a simple exercise to see $M^P=M^u$ and $^P\!M=M^{nu}$. The $\hat{\ZZ}(1)$-action on $M^P$ factors through $\ZZ_\ell(1)$. So $M^P$ is naturally a $\Lambda[[\ZZ_\ell(1)]]$-module. It is well-known that $\Lambda[[\ZZ_\ell(1)]]\cong\Lambda[[\gamma]]$, with $\gamma$ being the image of $t-1$ in $\ZZ_\ell(1)$. By finiteness, for each $m\in M$, there exist $i$ and $s\in\NN$ such that $\gamma^im=\gamma^{i+s}m$, so $\gamma^i(1-\gamma^s)m=0$. As $1-\gamma^s$ is invertible in $\Lambda[[\gamma]]$, we get $\gamma^im=0$. By finiteness again, there exists $i\in \NN$ such that $\gamma^iM=0$, \textit{i.e.}, $t-1$ acts nilpotently on $M^P$.\\\\
    Consider $M^P\hookrightarrow M\twoheadrightarrow M_P$, the $P$-coinvariant $M_P$ is isomorphic to $M^P$ via $\kappa: M_P\rightarrow M^P, x\mapsto \int_{\sigma\in P}\sigma x d\sigma$. So $M_P\simeq (M^P)_P\oplus(^P\!M)_P\simeq M_P\oplus (^P\!M)_P$, hence $(^P\!M)_P=0$. Similarly, $(^P\!M)^P=0$. These imply $(^P\!M)_\mu=0$ and $(^P\!M)^\mu=0$, so $t-1$ acts invertibly on $^P\!M$. 
\end{proof}

\begin{corollary}\label{cor_u_nu_decomp}
    We allow $\ell=p$. Let $f: X\rightarrow\AAA^1$ be a map of rigid analytic varieties, $\CF\in\DX$, and $\psi_f(\CF)\simeq \psi^P_f(\CF)\oplus\!^P\!\psi_f(\CF)$ be the canonical decomposition. Then $t-1$ acts nilpotently on $\psi^P_f(\CF)$ and invertibly on $^P\!\psi_f(\CF)$. We will also denote the decomposition as $\psi_f(\CF)\simeq\psi^u_f(\CF)\oplus\psi^{nu}_f(\CF)$. Similarly, we have $\phi_f(\CF)\simeq \phi^u_f(\CF)\oplus\phi^{nu}_f(\CF)(=\phi^P_f(\CF)\oplus\!^P\!\phi_f(\CF))$. Furthermore, $\mathrm{can}$ induces an isomorphism $\psi^{nu}_f(\CF)\isoto\phi^{nu}_f(\CF)$, and we have distinguished triangles $i^*\CF\xrightarrow{\mathrm{sp}}\psi_f^u(\CF)\xrightarrow{\mathrm{can}}\phi_f^u(\CF)\rightarrow$, $\phi^u_f(\CF)\xrightarrow{\mathrm{var}}\psi^u_f(\CF)\miwa\xrightarrow{\mathrm{cosp}}i^!\CF[2]\rightarrow$,  $i^*j_*j^*\CF\xrightarrow{\mathrm{sp}}\psi_f^u(\CF)\xrightarrow{\iota}\psi_f^u(\CF)\miwa\rightarrow$.
\end{corollary}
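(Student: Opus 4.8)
The plan is to deduce every assertion by applying the two exact functors $M\mapsto M^P$ and $M\mapsto {}^PM$ of Lemma~\ref{lem_iwasawa_basics}.3 and Remark~\ref{rmk_t_non_t_decomposition}, together with their compatibility with the Iwasawa twist, to three distinguished triangles in $D(X_0\bar\X B\mu)$ that we already possess: the $\mathrm{can}$ triangle $i^*\CF\xrightarrow{\mathrm{sp}}\psi_f(\CF)\xrightarrow{\mathrm{can}}\phi_f(\CF)\to$ and the $\mathrm{var}$ triangle $\phi_f(\CF)\xrightarrow{\mathrm{var}}\psi_f(\CF)\miwa\xrightarrow{\mathrm{cosp}}i^!\CF[2]\to$ of Definition~\ref{def_sp_can_var}, and the triangle $i^*j_*j^*\CF\xrightarrow{\mathrm{sp}}\psi_f(\CF)\xrightarrow{\iota}\psi_f(\CF)\miwa\to$ of Lemma~\ref{lem_T-1}. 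I would first record the structural inputs: $\mu$ acts trivially on $i^*\CF$, on $i^!\CF$ and on $i^*j_*j^*\CF$, so each of these is fixed by $M\mapsto M^P$ and annihilated by $M\mapsto{}^PM$; the Iwasawa twist commutes with the $P$-decomposition, so $(\psi_f(\CF)\miwa)^P\simeq\psi_f^P(\CF)\miwa$; and $\iota$ is an isomorphism on ${}^P(-)$-parts by Lemma~\ref{lem_iwasawa_basics}.2. Granting the nilpotency/invertibility claim below and writing $\psi_f^u:=\psi_f^P$, $\psi_f^{nu}:={}^P\psi_f$, and likewise for $\phi$, applying $M\mapsto M^P$ to the three triangles then yields verbatim the three displayed distinguished triangles, while applying $M\mapsto{}^PM$ to the $\mathrm{can}$ triangle yields $0\to{}^P\psi_f(\CF)\xrightarrow{\mathrm{can}}{}^P\phi_f(\CF)\to$, i.e. the asserted isomorphism $\psi_f^{nu}(\CF)\isoto\phi_f^{nu}(\CF)$.

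The substantive point is that $t-1$ acts nilpotently on $\psi_f^P(\CF)$ and $\phi_f^P(\CF)$ and invertibly on ${}^P\psi_f(\CF)$ and ${}^P\phi_f(\CF)$. I would treat $\psi$ first and deduce $\phi$: for ${}^P\phi_f(\CF)\simeq{}^P\psi_f(\CF)$ this is immediate, while $\phi_f^P(\CF)$ fits into a $(t-1)$-equivariant triangle with $i^*\CF$ (on which $t-1=0$) and $\psi_f^P(\CF)$, so nilpotency propagates by the elementary triangulated fact that $(t-1)^m$ killing $A$ and $(t-1)^n$ killing $B$ force $(t-1)^{m+n}$ to kill $C$ in a triangle $A\to B\to C\to$. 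For $\psi$, recall $\psi_f(\CF)\in D^{(b)}_{zc}(X_0)$ by Proposition~\ref{prop_phipreserveszc}; nilpotency (resp. invertibility) of an endomorphism of a bounded complex can be tested on its cohomology sheaves, again by that triangulated fact (resp. because a morphism inducing isomorphisms on all cohomology is an isomorphism), and since $M\mapsto M^P$ is exact and defined fibrewise it commutes with $\CH^i$ and with stalks; so it suffices to check a single stalk of each $\CH^i(\psi_f(\CF))$, which is a finite $\Lambda$-module with continuous $\mu$-action, and there the Lemma preceding this Corollary gives precisely nilpotency of $t-1$ on the $P$-invariants and invertibility on the complement. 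To upgrade this pointwise statement to an honest nilpotent endomorphism of the sheaf $\CH^i(\psi_f^P(\CF))$, one works locally on $X_0$, writes a Zariski-constructible sheaf as an iterated extension of local systems on finitely many locally closed subspaces, observes that on each such local system a pointwise-nilpotent endomorphism is nilpotent of order bounded by its order at one stalk, and takes the maximum over the finitely many pieces.

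The hard part is exactly this last uniformity: turning the stalk-by-stalk nilpotency coming from the preceding Lemma into a bound valid on each quasi-compact open of $X_0$, so that $t-1$ is genuinely nilpotent on the objects $\psi_f^P(\CF)$ and $\phi_f^P(\CF)$ and not merely on each stalk. This is where Zariski-constructibility, via finiteness of stalks, is essential: without it the statement fails, as the infinite direct sum of skyscrapers in Remark~\ref{rmk_nonZariskiConst_example} shows. Everything else in the Corollary is a formal consequence of exactness of the $P$-decomposition applied to distinguished triangles already in hand.
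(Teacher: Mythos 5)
Your proof is correct and takes essentially the same approach as the paper's one-line argument: identify the $P$-decomposition of Remark~\ref{rmk_t_non_t_decomposition} with the unipotent/non-unipotent one via the preceding Lemma together with the stalkwise finiteness supplied by Proposition~\ref{prop_phipreserveszc}, then apply the two exact projections $(-)^P$ and ${}^P(-)$ to the three distinguished triangles already in hand, using that $\mu$ acts trivially on the boundary terms $i^*$, $i^!$ and $i^*j_*j^*$. You are rather more explicit than the paper, in particular about upgrading stalkwise nilpotency to local uniformity (via stratification into finitely many local systems) and in deducing the $\phi$-case from the $\psi$-case through the triangulated nilpotency lemma (the paper instead simply applies the preceding Lemma to $\phi_f(\CF)$, which is also Zariski-constructible), but these are fillings-in of the same argument rather than a different route.
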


\begin{proof}
    This is a direct consequence of the lemma above and the finiteness of local sections, which is a consequence of Proposition \ref{prop_phipreserveszc}. The “Furthermore” part follows from the fact that $\mu$ acts trivially on $i^*(-)$.
\end{proof}

We now discuss Beilinson's construction in our context, following the exposition in \cites{morel_beilinson_2018}. Using results established so far, most of \cites{morel_beilinson_2018} carries through. In the following, we will only elaborate when changes are needed. The main differences are: (1) as we are working with coefficient $\Lambda=\FF_{\ell^r}$ or $\ZZ/\ell^r$, the logarithm of monodromy does not make sense. Instead, one should replace $N$ in \textit{loc. cit.} by $t-1$ throughout; (2) if $X_0$ is not quasi-compact, then the order of nilpotency of $t-1$ acting on $\psi_f^u(\CF)$ is only locally finite. Effectively, this means one either assumes $X$ quasi-compact or adds $\varinjlim$ when stating certain results.\\

Fix a map $f: X\rightarrow\AAA^1$ of rigid analytic varieties. Denote $\psi_f[-1]$ (resp. $\phi_f[-1]$) by $\Psi_f$ (resp. $\Phi_f$). Recall we have fixed a topological generator $t$ of $\pi^{\mathrm{fin}}_1(\Gm,1)$.

\begin{lemma}[{\cite[\nopp 1.3]{morel_beilinson_2018}}]\label{lem_psiuniistexact}
    $\Psi_f^u: \Dbzc(X)\rightarrow D^{(b)}_{zc}(X_0)$ is perverse t-exact.
\end{lemma}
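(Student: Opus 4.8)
The plan is to reduce perverse t-exactness of $\Psi_f^u$ to the perverse t-exactness of the nearby cycle functor $\Psi_f$ itself (which is Theorem \ref{thm_psi_t_exact_duality}(1) — but wait, that's stated later, so I should instead follow Morel's argument and prove it more directly here), combined with the fact that $(-)^u$ is an exact summand operation. Actually, since Lemma \ref{lem_psiuniistexact} comes before the t-exactness of the full $\Psi_f$, the honest route is Beilinson's original argument: relate $\Psi_f^u$ to $j_*$ and $j_!$ via the can/var exact sequences, using that $j_*,j_!$ are perverse t-exact (Example \ref{ex_j_exact}).

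\textbf{Step 1: Reduce to the unipotent monodromic picture.} By Corollary \ref{cor_u_nu_decomp}, the decomposition $\psi_f\simeq\psi_f^u\oplus\psi_f^{nu}$ is a direct sum of exact functors on $\Dbzc(X)$, so it suffices to handle $\Psi_f^u = \psi_f^u[-1]$ and $\Psi_f^{nu}=\psi_f^{nu}[-1]$ separately; in particular t-exactness of $\Psi_f^u$ is implied by t-exactness of $\Psi_f$ together with t-exactness of $\Psi_f^{nu}$, but more usefully I'll treat $\Psi_f^u$ on its own using the unipotent nearby cycle as a derived limit of ``twisted'' pushforwards. Following \cite{morel_beilinson_2018}, recall that for $\CF\in\Dbzc(X)$ one has an identification expressing $\psi_f^u(\CF)$ in terms of $i^*$ applied to $j_*$ of $\CF$ tensored with the (pro-)unipotent Kummer local systems $\CL_a = $ (the rank-$a$ local system on $\Gm$ with unipotent monodromy a single Jordan block), i.e. $\psi_f^u(\CF)\simeq \varinjlim_a i^*j_*(j^*\CF\otimes f^*\CL_a)$ up to a shift, with the monodromy filtration coming from the filtration on $\CL_a$. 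This is the form in which t-exactness becomes tractable.

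\textbf{Step 2: Use perverse t-exactness of $j_*$ and $j_!$.} By Example \ref{ex_j_exact}, $j_*$ and $j_!$ are perverse t-exact on our categories. Tensoring a perverse sheaf on $X^\times$ by the (lisse, of appropriate amplitude) pullback $f^*\CL_a$ shifts perversity in a controlled way; more precisely $j^*(-)\otimes f^*\CL_a$ sends $\Perv(X)$ into $\Perv(X^\times)$ since $f^*\CL_a$ placed in the right degree is perverse-exact to tensor against (it is ``lisse'' in the sense of the paper, concentrated suitably). Then $j_*$ of this is perverse, $i^*$ of a perverse sheaf lies in ${}^pD^{[-1,0]}$ and $i^!$ in ${}^pD^{[0,1]}$ by Corollary \ref{cor_4110}(1), and the unipotent nearby cycle $\Psi_f^u(\CF)$ is extracted as the $a\to\infty$ stabilization of ${}^p\CH^{-1}$ (or ${}^p\CH^0$, depending on normalization) of $i^*j_*(j^*\CF\otimes f^*\CL_a)$. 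Concretely, Beilinson's trick is: the can and var exact sequences of Corollary \ref{cor_4110} together with the $\iota$-triangle of Corollary \ref{cor_u_nu_decomp} show that $\Psi_f^u(\CF)$ sits, for $\CF$ perverse, in a four-term exact sequence (or between $j_!$ and $j_*$ type terms) all of whose outer terms are perverse; by the standard ``nine lemma''/snake argument in the abelian category $\Perv$ one concludes $\Psi_f^u(\CF)\in\Perv(X_0)$.

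\textbf{Step 3: Assemble and address boundedness/local finiteness.} For $\CF\in{}^pD^{\leq 0}$, write $\CF$ via its perverse truncation triangle and apply Step 2 to each perverse cohomology object, using exactness of the monodromy-filtration machinery and that $\Psi_f^u$ has finite cohomological amplitude (from the cohomological-dimension bound in the footnote to Proposition \ref{prop_phipreserveszc} and Zariski-constructibility of the output, Proposition \ref{prop_phipreserveszc}); a spectral sequence / dévissage then gives $\Psi_f^u(\CF)\in{}^pD^{\leq0}(X_0)$, and dually for ${}^pD^{\geq0}$. One subtlety flagged in the text before the lemma: if $X_0$ is not quasi-compact, $t-1$ is only \emph{locally} nilpotent on $\psi_f^u(\CF)$, so the ``$\varinjlim_a$'' must be taken as a genuine filtered colimit of functors and one checks perversity is preserved under such filtered colimits with uniformly bounded amplitude (true by checking on affinoids and using Lemma \ref{lem_perversecriterion}).

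\textbf{Main obstacle.} I expect the crux to be Step 2 — precisely pinning down the perverse amplitude of $i^*j_*(j^*\CF\otimes f^*\CL_a)$ and verifying that the ``stable'' piece is exactly $\Psi_f^u(\CF)[\text{shift}]$ with no residual contribution from ${}^p\CH^0$ vs ${}^p\CH^{-1}$, i.e. the bookkeeping that turns the two-sided bound from Corollary \ref{cor_4110} into an actual equality $\Psi_f^u(\CF)\in\Perv(X_0)$. The non-quasi-compactness issue (local vs. global nilpotency of $t-1$) is a real but routine-to-handle wrinkle via Lemma \ref{lem_perversecriterion}; the genuinely delicate point is the Beilinson gluing-style diagram chase, which is where one must faithfully transcribe \cite[\S1.3]{morel_beilinson_2018} and check each input (perverse t-exactness of $j_*$, $j_!$ from Example \ref{ex_j_exact}; the four-term exact sequences of Corollary \ref{cor_4110}; Artin--Grothendieck vanishing, Proposition \ref{prop_artinvanishing}) is available in our rigid-analytic setting.
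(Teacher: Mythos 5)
Your proposal identifies the right ingredients — the $\iota$-triangle from Corollary \ref{cor_u_nu_decomp}, perverse t-exactness of $j_*$ and $j_!$ from Example \ref{ex_j_exact}, and local nilpotency of $t-1$ — but never assembles them into the step that actually forces perversity. The paper's proof rotates the triangle $i^*j_*j^*\CF\xrightarrow{\mathrm{sp}}\psi_f^u(\CF)\xrightarrow{t-1}\psi_f^u(\CF)\rightarrow$ into $\Psi_f^u(\CF)\xrightarrow{t-1}\Psi_f^u(\CF)\rightarrow i^*j_*j^*\CF\rightarrow$, observes that $i^*j_*j^*\CF\in{}^p\!D^{\leq 0}_{zc}(X_0)$ (via the recollement triangle $j_!j^*\CF\to j_*j^*\CF\to i_*i^*j_*j^*\CF$ and Example \ref{ex_j_exact}), and reads off from the perverse cohomology long exact sequence that $t-1$ is surjective on ${}^p\CH^q(\Psi_f^u(\CF))$ for $q>0$; since $t-1$ is nilpotent on each quasi-compact open, a surjective nilpotent endomorphism forces ${}^p\CH^q(\Psi_f^u(\CF))=0$ for $q>0$, and dually for the lower bound. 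That nilpotency-kills-cohomology step is the entire content, and your Step~2 replaces it with an appeal to ``a four-term exact sequence all of whose outer terms are perverse; nine lemma.'' This is not an argument: being sandwiched between perverse objects does not by itself make the middle term perverse, and to even write such a sequence in $\Perv(X_0)$ you would already need to know $\Psi_f^u(\CF)$ is perverse.

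A secondary problem: your Step~1 leans on the identification $\Psi_f^u(\CF)\simeq\varinjlim_a i^*j_*(\CF\otimes\CL_a)[-1]$, which in this paper is Lemma \ref{lem_beilinsonpsi} — stated after the present lemma, with the remark that the colimit lands in $\Perv(X_0)$ resting on exactly the t-exactness you are trying to prove. You rightly flagged the circularity risk of invoking Theorem \ref{thm_psi_t_exact_duality}, but the same risk applies to Lemma \ref{lem_beilinsonpsi}. The direct $(t-1)$-triangle argument avoids both.
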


\begin{proof}
    Let $\CF\in$ $^p\!D^{\leq 0}_{zc}(X)$, we want to show $\Psi_f^u(\CF)\in$ $^p\!D^{\leq 0}_{zc}(X_0)$. We have $\Psi_f^u(\CF)\xrightarrow{t-1}\Psi_f^u(\CF)\rightarrow i^*j_*j^*\CF\rightarrow$. The perverse cohomology long exact sequence associated to $j_!j^*\CF\rightarrow\CF\rightarrow i_*i^*j_*j^*\CF\rightarrow$ and Example \ref{ex_j_exact} imply $i^*j_*j^*\CF\in$ $^p\!D^{\leq 0}_{zc}(X_0)$. On each quasi-compact open in $X_0$, combine this with the perverse cohomology long exact sequence associated to $\Psi_f^u(\CF)\xrightarrow{t-1}\Psi_f^u(\CF)\rightarrow i^*j_*j^*\CF\rightarrow$ and the nilpotency of $t-1$ acting on $^p\CH^q(\Psi_f^u(\CF))$, we get $^p\CH^q(\Psi_f^u(\CF))=0, \forall\,q>0$. So $\Psi_f^u(\CF)\in$ $^p\!D^{\leq 0}_{zc}(X_0)$. The other direction is similar.
\end{proof}

For every $a\in\NN$, let $\CL_a$ be the rank $a+1$ local system on $\Gm$ with stalk $\Lambda\oplus\Lambda(-1)\oplus\cdots\oplus\Lambda(-a)$ at $1$ and monodromy $t$ represented by the unipotent Jordan block 
\[
\begin{pmatrix}
1      & 1 & 0      & \cdots & 0 \\
0      & 1 &        &        & \vdots \\
\vdots &   & \ddots &        & 0 \\
0      &   &        & 1      & 1 \\
0      & 0 & \cdots & 0      & 1
\end{pmatrix}
\]
For $a\leq b$, the injection to the first $a$ coordinates (resp. surjection to the last $a$ coordinates) defines a map $\alpha_{a,b}:\CL_a\hookrightarrow\CL_b$ (resp. $\beta_{b,a}:\CL_a\twoheadleftarrow\CL_b(a-b)$).

\begin{lemma}[{\cite[\nopp 3.2, 4.3]{morel_beilinson_2018}}]\label{lem_beilinsonpsi}
    Let $\CF\in \Perv_{ext}(X^\X)$. Then, there are natural isomorphisms: $i_*\Psi_f^u(\CF)\isoto\varinjlim_a\Ker(j_!(\CF\otimes\CL_a)\rightarrow j_*(\CF\otimes\CL_a))$ and $\Psi_f^u(\CF)\isoto\varinjlim_ai^*j_*(\CF\otimes\CL_a)[-1]$.
\end{lemma}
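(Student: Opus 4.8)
The plan is to run Morel's argument \cite{morel_beilinson_2018}, using Lemma \ref{lem_T-1} in place of the monodromy exact sequence there and inserting the colimits $\varinjlim_a$ wherever $X_0$ fails to be quasi-compact. Write $\CG_a := \CF\otimes f^*\CL_a$ (pullback along $X^\times\to\Gm$); it is perverse and extendable --- if $\overline{\CF}\in\DX$ extends $\CF$, then $\overline{\CF}\otimes j_!(f^*\CL_a)$ restricts to $\CG_a$ and is Zariski-constructible since $j_!$ of a lisse sheaf is --- so Proposition \ref{prop_phipreserveszc}, Corollary \ref{cor_u_nu_decomp} and Corollary \ref{cor_4110} all apply to it. Two preliminary inputs are needed: (a) a projection formula $\psi_f(\CF\otimes f^*\CL)\simeq\psi_f(\CF)\otimes_\Lambda\CL_1$, natural in a local system $\CL$ on $\Gm$ and equivariant for the tensor-product monodromy on the right; and (b) setting $V_a:=(\CL_a)_1\cong\Lambda[s]/(s^{a+1})$ (Tate twists suppressed, $t$ acting as multiplication by $1+s$), the fact that $\alpha_{a,a+1}$ induces $V_a\xrightarrow{\cdot s}V_{a+1}$, so that $\varinjlim_aV_a\cong\Lambda((s))/\Lambda[[s]]=:V_\infty$, on which $t-1$ acts as the locally nilpotent operator ``$\cdot s$''.

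For the second isomorphism, apply Lemma \ref{lem_T-1} to $\CG_a$ and, after fixing $t$ (identifying $\iota$ with $t-1$), obtain $i^*j_*\CG_a\simeq\fib(\psi_f\CG_a\xrightarrow{t-1}\psi_f\CG_a)$. Input (a) rewrites this as $\fib(\psi_f(\CF)\otimes V_a\xrightarrow{t\otimes J-1}\psi_f(\CF)\otimes V_a)$ with $t$ the monodromy on $\psi_f(\CF)$ and $J$ the Jordan block; since $t-1$ is invertible on $\psi^{nu}_f(\CF)$ and $J$ is unipotent, $t\otimes J-1$ is invertible on $\psi^{nu}_f(\CF)\otimes V_a$, so only the unipotent part survives: $i^*j_*\CG_a\simeq\fib(M\otimes V_a\xrightarrow{(1+N)\otimes J-1}M\otimes V_a)$ with $M=\psi^u_f(\CF)$ and $N=t-1$ locally nilpotent (Corollary \ref{cor_u_nu_decomp}). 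Passing to $\varinjlim_a$ (filtered colimits being exact) and using (b), the colimit is $\fib(M\otimes V_\infty\xrightarrow{\Theta}M\otimes V_\infty)$ with $\Theta=(1+N)\otimes(1+s)-1=\big((1+N)\otimes 1\big)\circ(s+\tilde N)$, $\tilde N:=(1+N)^{-1}N$ locally nilpotent; the first factor is invertible, so $\fib(\Theta)\simeq\fib(s+\tilde N)$. The remaining check is purely algebraic: solving the recursion $m_{j+1}=n_j-\tilde N m_j$ (finiteness being automatic from local nilpotence of $\tilde N$) shows $s+\tilde N$ is surjective with kernel $\cong M$, i.e. $0\to M\to M\otimes V_\infty\xrightarrow{s+\tilde N}M\otimes V_\infty\to 0$ is exact. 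Hence $\varinjlim_a i^*j_*\CG_a\simeq M=\psi^u_f(\CF)$, and shifting by $[-1]$ gives $\Psi^u_f(\CF)\simeq\varinjlim_a i^*j_*\CG_a[-1]$; monodromy-equivariance (the action on the right being induced by the tautological unipotent $t$ on the $\CL_a$'s) and naturality in $\CF$ are carried through the identifications.

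For the first isomorphism, use that for complementary open/closed immersions $j,i$ one has $i^!j_!\simeq i^*j_*[-1]$ (since $i^*j_!=0$, the cone of $j_!\CG\to j_*\CG$ is $i_*i^*j_*\CG$, which is also $i_*i^!j_!\CG[1]$). Because $j_!$ and $j_*$ are perverse t-exact (Example \ref{ex_j_exact}), $j_!\CG_a\to j_*\CG_a$ is a morphism of perverse sheaves with fibre $i_*i^!j_!\CG_a\simeq i_*i^*j_*\CG_a[-1]$, so $\Ker(j_!\CG_a\to j_*\CG_a)\simeq{}^p\CH^0\big(i_*i^*j_*\CG_a[-1]\big)=i_*\,{}^p\CH^{-1}(i^*j_*\CG_a)$. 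By Corollary \ref{cor_4110}(1) applied to the perverse sheaf $j_*\CG_a$, $i^*j_*\CG_a\in{}^p\!D^{[-1,0]}_{zc}(X_0)$ uniformly in $a$, so --- filtered colimits being t-exact for the perverse t-structure, the colimit here being Zariski-constructible and bounded --- $\varinjlim_a$ commutes with $^p\CH^{-1}$; combined with the second isomorphism, $\varinjlim_a i^*j_*\CG_a\simeq\psi^u_f(\CF)=\Psi^u_f(\CF)[1]$, which is perverse shifted by one (Lemma \ref{lem_psiuniistexact}), so $^p\CH^{-1}$ of it is $\Psi^u_f(\CF)$. Therefore $\varinjlim_a\Ker(j_!\CG_a\to j_*\CG_a)\simeq i_*\Psi^u_f(\CF)$.

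The main obstacle is establishing (a) naturally and monodromy-equivariantly in our setting. I see three routes: (i) reduce to the algebraic case via Lemma \ref{lem_comparisonphi} after the \etale-local resolution-of-singularities reduction used in Proposition \ref{prop_phipreserveszc}, where the formula is standard; (ii) induct on $a$ via $0\to\CL_{a-1}\to\CL_a\to\Lambda(-a)\to 0$, applying $\psi_f(\CF\otimes f^*(-))$ --- here the delicate point is pinning down the extension class so that the monodromy on the extension of $\psi_f(\CF)(-a)$ by $\psi_f(\CF)\otimes V_{a-1}$ is the full Jordan block $J_{a+1}$; (iii) evaluate on stalks via the Milnor fibre interpretation (Proposition \ref{prop_nearbyfibinterp}), where $f$ restricted to $U_{\epsilon,a}$ is the constant $a$ so $f^*\CL$ is there the constant sheaf $\CL_1$. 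A secondary technical point is the perverse t-exactness of filtered colimits invoked in the third paragraph, legitimate here because of the uniform perverse amplitude bound and the Zariski-constructibility of the colimit.
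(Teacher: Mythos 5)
Your approach matches the paper's: run Morel's Proposition 3.1 with $N$ replaced by $t-1$. The change of basis by the automorphism $(1+N)\otimes 1$ --- equivalently, replacing $x\mapsto(x,-Nx,N^2x,\ldots)$ with $x\mapsto(x,-\tilde Nx,\tilde N^2x,\ldots)$ where $\tilde N=N/(1+N)$ --- is precisely the modification the paper prescribes, and both the module computation (exactness of $0\to M\to M\otimes V_\infty\xrightarrow{s+\tilde N}M\otimes V_\infty\to 0$ using local nilpotence of $\tilde N$) and the identification $\varinjlim_a V_a\cong\Lambda((s))/\Lambda[[s]]$ with transition maps given by multiplication by $s$ are correct.

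The one genuine loose end is the projection formula (a), which you flag but do not prove, and none of your three detours is the direct route. It falls straight out of Definition \ref{def_van}: for $n$ a multiple of the trivializing level $n_0$ of $\CL$, one has $p_n^*f^*\CL=f_n^*e_n^*\CL\cong\underline{V}$ constant with $V=\CL_1$ (here $f_n\colon X^\times_n\to\Gm$ is the second projection); the projection formula along the finite \etale map $p_n$ gives $p_{n*}p_n^*(\CF\otimes f^*\CL)\simeq p_{n*}p_n^*\CF\otimes\underline{V}$; since $V$ is finite free over $\Lambda$, $(-)\otimes\underline{V}$ commutes with $i^*j_*$ and with $\varinjlim_n$; and tracing through the $\mathrm{Aut}(e_n)=\mu_n$-actions, the chosen trivialization transports the functorial $\mu_n$-action on $p_n^*f^*\CL$ to the monodromy representation of $\mu_n$ on $V$. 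This yields (a) naturally and $\mu$-equivariantly, with no resolution of singularities or Milnor fibres needed. Separately, the phrase ``filtered colimits being perverse t-exact'' is not a valid justification for $\varinjlim_a{}^p\CH^{-1}\simeq{}^p\CH^{-1}\varinjlim_a$: the perverse t-structure is not compatible with filtered colimits. The actual reason --- which the paper alludes to by noting the colimit lands in $\Perv$ ``locally'' --- is Zariski-local stabilization of the system $\{i^*j_*\CG_a\}_a$, forced by the local nilpotence of $t-1$ on $\psi^u_f(\CF)$ (Corollary \ref{cor_u_nu_decomp}): on any quasi-compact open of $X_0$ the system is eventually constant, so the commutation holds term by term.
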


The colimits are taken in $D(X)$. They land in $\Perv(X_0)$ because the same proofs in \textit{loc. cit.} show this is the case locally.

\begin{proof}
    In the proof of Proposition 3.1 in \textit{loc. cit.}, replace $N$ by $t$ and note it acts on $\Psi^u_f\CF\otimes L_a$ by $t\otimes t$. Change the map $\Psi^u_f\CF\rightarrow\Psi^u_f\CF\otimes L_a$ to $x\mapsto(x,-\frac{(t-1)}{t}x,\frac{(t-1)^2}{t^2}x,...,(-1)^a\frac{(t-1)^a}{t^a}x)$.
\end{proof}

\begin{lemma}[{\cite[\nopp 4.2]{morel_beilinson_2018}}]\label{lem_beilinsonpsiduality}
    Let $\CF\in \Perv_{ext}(X^\X)$. Then, there is a natural isomorphism $\Psi_f^u(\DD\CF)(-1)\isoto\DD\Psi_f^u(\CF)$.
\end{lemma}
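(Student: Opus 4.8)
The plan is to transplant the proof of \cite[4.2]{morel_beilinson_2018}, whose only inputs are the two presentations of $\Psi_f^u$ furnished by Lemma \ref{lem_beilinsonpsi}, the standard commutations of Verdier duality with $j_!,j_*,i^*,i^!$ (and $\DD i_*\simeq i_*\DD$), the identity $\DD(\CG\otimes\CL)\simeq\DD\CG\otimes\CL^\vee$ for a dualizable local system $\CL$, and a self-duality statement for the $\CL_a$. Concretely, the first step is to establish isomorphisms $\CL_a^\vee\isoto\CL_a(a)$ on $\Gm$ that are compatible, up to the evident Tate twists, with the maps $\alpha_{a,b}$ and $\beta_{b,a}$ (the dual of $\alpha_{a,b}$ becoming a twist of $\beta_{b,a}$, and vice versa). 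The existence of $\CL_a^\vee\cong\CL_a(a)$ follows because the $\Lambda[t]$-module $\Lambda[t]/((t-1)^{a+1})$ underlying $\CL_a$ is self-injective — the ring $\Lambda[t]/((t-1)^{a+1})$ is a zero-dimensional complete intersection over $\Lambda=\FF_{\ell^r}$ or $\ZZ/\ell^r$, hence quasi-Frobenius — so its $\Lambda$-linear dual with the contragredient action is again cyclic, i.e. a single unipotent Jordan block of the same size; comparing the weights of the graded pieces $\Lambda(-k)$, $0\leq k\leq a$, with those of their duals produces the twist by $(a)$. The compatibility with $\alpha_{a,b},\beta_{b,a}$ is a direct matrix computation involving the order-reversing permutation, and it is this compatibility that is the source of the Tate twist $(-1)$ in the statement.

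The core of the argument is then a formal manipulation. Applying $\DD_{X_0}$ to the isomorphism $\Psi_f^u(\CF)\isoto\varinjlim_a i^*j_*(\CF\otimes\CL_a)[-1]$ of Lemma \ref{lem_beilinsonpsi} and using that $\DD=\RHom(-,\omega)$ carries colimits to limits, one obtains $\DD_{X_0}\Psi_f^u(\CF)\simeq R\varprojlim_a i^!j_!(\DD\CF\otimes\CL_a^\vee)[1]$. On the other hand, dualizing the second presentation $i_*\Psi_f^u(\DD\CF)\isoto\varinjlim_a\Ker\big(j_!(\DD\CF\otimes\CL_a)\to j_*(\DD\CF\otimes\CL_a)\big)$ of Lemma \ref{lem_beilinsonpsi} turns the kernel into a cokernel and the $\alpha$-system into the $\beta$-system; feeding in $\CL_a^\vee\simeq\CL_a(a)$ and the compatibility of the transition maps, one identifies the two resulting inverse systems with one another, up to the twist by $(-1)$, and reads off the desired natural isomorphism $\Psi_f^u(\DD\CF)(-1)\isoto\DD\Psi_f^u(\CF)$. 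Naturality in $\CF\in\Perv_{ext}(X^\X)$ holds because every identification used is functorial.

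The main obstacle I anticipate is the bookkeeping in this last step: matching the colimit presentation on one side against the limit presentation on the other — rather than a mere termwise comparison — and keeping the Tate twists honest as one passes to the (co)limit over $a$. This is precisely the delicate point of \cite[4.2]{morel_beilinson_2018}, and I do not expect it to require ideas beyond what is there. Two subsidiary points should be dealt with first: (i) that $\DD$ commutes with the relevant (co)limits in $\Dbzc$, which is immediate since $\DD=\RHom(-,\omega)$ and every term is reflexive; and (ii) the non-quasi-compactness of $X_0$ flagged in this section — the order of nilpotency of $t-1$ on $\Psi_f^u$ is only locally finite, so one genuinely keeps the colimit over $a$ instead of replacing it by a single large $a$, but the formal argument above is insensitive to that.
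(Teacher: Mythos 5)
Your plan is the right one — the paper gives no independent proof and simply cites Morel's~4.2, whose ingredients are exactly what you list: the presentations of Lemma~\ref{lem_beilinsonpsi}, the $\DD$-commutations with $j_!,j_*,i^*,i^!$, dualizability of $\CL_a$, and the self-duality $\CL_a^\vee\simeq\CL_a(a)$ together with its (anti)compatibility with the $\alpha$/$\beta$ transition maps. Two remarks on the write-up, though. First, the quasi-Frobenius detour is overkill: $L_a^\vee$ has fiber $\Lambda\oplus\Lambda(1)\oplus\cdots\oplus\Lambda(a)$ with $t$ acting by the inverse transpose of a unipotent Jordan block, which is again a single unipotent Jordan block after reversing the order of the summands, so $\CL_a^\vee\simeq\CL_a(a)$ is an immediate matrix check.

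Second, and more seriously, the core step as written does not reach the target. Dualizing the first presentation applied to $\DD\CF$, i.e.\ $i_*\Psi_f^u(\DD\CF)\isoto\varinjlim_a\Ker(j_!(\DD\CF\otimes\CL_a)\to j_*(\DD\CF\otimes\CL_a))$, produces a formula for $i_*\DD\Psi_f^u(\DD\CF)=R\varprojlim_a\mathrm{Coker}\big(j_!(\CF\otimes\CL_a^\vee)\to j_*(\CF\otimes\CL_a^\vee)\big)$, which involves $\CF$ and not $\DD\CF$; matching it against the inverse system $\{i^!j_!(\DD\CF\otimes\CL_a^\vee)[1]\}_a$ from your dualized (P2) is therefore not a termwise identification, and if it were it would yield a relation between $\DD\Psi_f^u(\CF)$ and $\DD\Psi_f^u(\DD\CF)$, not the desired $\DD\Psi_f^u(\CF)\simeq\Psi_f^u(\DD\CF)(-1)$. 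To land on the statement you must keep one side undualized: either compare the $R\varprojlim$ from the dualized (P2) directly with the $\varinjlim$ from (P2) or (P1) applied to $\DD\CF$, or, equivalently, first establish the ``dual'' presentation that an inverse limit of cokernels over the $\alpha^\vee$ (equivalently $\beta$-type) transition maps computes $\Psi_f^u(-)(-1)$ — note $i^!j_!\CG[1]\simeq i^*j_*\CG$ via $i^*j_!=0$, after which the twist $(a)$ from $\CL_a^\vee\simeq\CL_a(a)$ and the transition maps must conspire to produce a net twist of $(-1)$. That colimit-versus-limit comparison is precisely the content of Morel's 4.2 and should be carried out, not only flagged.
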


\begin{definition-lemma}[{\cite[\nopp 5.1]{morel_beilinson_2018}}]
    The \underline{maximal extension functor} $\Xi_f: \Perv_{ext}(X^\X)\rightarrow \Perv(X)$ is defined by $\CF\mapsto \varinjlim_a\Ker(\gamma_{a,a-1})$, where $\gamma_{a,a-1}$ is the composition $j_!(\CF\otimes f^*\CL_a)\xrightarrow{\beta_{a,a-1}}j_!(\CF\otimes f^*\CL_{a-1})(-1)\rightarrow j_*(\CF\otimes f^*\CL_{a-1})(-1)$.
\end{definition-lemma}

\begin{lemma}[{\cite[\nopp 5.5]{morel_beilinson_2018}}]\label{lem_Xi}
     There is a natural isomorphism $\DD\Xi_f\simeq\Xi_f\DD$, $\Xi_f$ is exact, and we have the following canonical maps and exact sequences, which are Verdier dual to each other:
     \begin{equation*}
     0\rightarrow j_!\xrightarrow{\delta}\Xi_f\xrightarrow{\alpha} i_*\Psi_f^u(-1)\rightarrow0\,, \quad\quad 0\rightarrow i_*\Psi_f^u\xrightarrow{\beta}\Xi_f\xrightarrow{\epsilon} j_*\rightarrow0
     \end{equation*}
\end{lemma}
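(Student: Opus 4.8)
The plan is to follow Morel's exposition of Beilinson's construction \cite[\nopp 5.1--5.5]{morel_beilinson_2018} essentially verbatim, since that argument is purely formal once the following inputs are in place, all of which we now have: $j_!$ and $j_*$ are perverse $t$-exact for $j\colon X^\X\hookrightarrow X$ (Example \ref{ex_j_exact}); $\Psi_f^u$ is perverse $t$-exact (Lemma \ref{lem_psiuniistexact}); Beilinson's two presentations $i_*\Psi_f^u(\CF)\isoto\varinjlim_a\Ker\bigl(j_!(\CF\otimes f^*\CL_a)\to j_*(\CF\otimes f^*\CL_a)\bigr)$ and $\Psi_f^u(\CF)\isoto\varinjlim_a i^*j_*(\CF\otimes f^*\CL_a)[-1]$ (Lemma \ref{lem_beilinsonpsi}), together with its self-duality (Lemma \ref{lem_beilinsonpsiduality}); and the duality exchanges $\DD j_!\simeq j_*\DD$, $\DD j_*\simeq j_!\DD$, $\DD i_*\simeq i_*\DD$ (the footnote to Proposition \ref{prop_artinvanishing}), plus the self-duality of the local systems $\CL_a$ up to Tate twist, under which $\DD\alpha_{a,b}$ corresponds to $\beta_{b,a}$. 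As throughout this section, the logarithm of monodromy $N$ in \emph{loc. cit.} is to be replaced by $t-1$.

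First I would build the four structure maps from the two basic short exact sequences of local systems on $\Gm$: the $t$-invariants of $\CL_a$ form the trivial sub-local-system $\CL_0$ with quotient $\CL_{a-1}(-1)$, giving $0\to\CL_0\xrightarrow{\alpha_{0,a}}\CL_a\xrightarrow{\beta_{a,a-1}}\CL_{a-1}(-1)\to0$; dually, the top-weight quotient gives $0\to\CL_{a-1}\xrightarrow{\alpha_{a-1,a}}\CL_a\to\CL_0(-a)\to0$. Tensoring by $\CF$ and applying the exact functors $j_!$ and $j_*$: since $\beta_{a,a-1}\alpha_{0,a}=0$, the subobject $j_!\CF=j_!(\CF\otimes f^*\CL_0)\subseteq j_!(\CF\otimes f^*\CL_a)$ lands in $\Ker\gamma_{a,a-1}$, and one identifies $\Ker\gamma_{a,a-1}/j_!\CF\simeq\Ker\bigl(j_!(\CF\otimes f^*\CL_{a-1})(-1)\to j_*(\CF\otimes f^*\CL_{a-1})(-1)\bigr)$; taking $\varinjlim_a$ and invoking the first presentation in Lemma \ref{lem_beilinsonpsi} produces $0\to j_!\xrightarrow{\delta}\Xi_f\xrightarrow{\alpha}i_*\Psi_f^u(-1)\to0$. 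For the second sequence, the commuting square relating the canonical maps $j_!(\CF\otimes f^*\CL_a)\to j_*(\CF\otimes f^*\CL_a)$ to $\beta_{a,a-1}$ shows $\Ker\bigl(j_!(\CF\otimes f^*\CL_a)\to j_*(\CF\otimes f^*\CL_a)\bigr)\subseteq\Ker\gamma_{a,a-1}$, so Lemma \ref{lem_beilinsonpsi} gives $\beta\colon i_*\Psi_f^u\hookrightarrow\Xi_f$, and its cokernel is identified with $j_*$ using the second short exact sequence of $\CL_a$'s, exactly as in \cite[\nopp 5.1]{morel_beilinson_2018}.

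Exactness of $\Xi_f$ is then immediate: the first sequence exhibits $\Xi_f\CF$, for $\CF$ perverse, as an extension of $i_*\Psi_f^u\CF(-1)$ (perverse by Lemma \ref{lem_psiuniistexact}) by $j_!\CF$ (perverse by Example \ref{ex_j_exact}), so $\Xi_f\CF\in\Perv(X)$; and since $j_!$ and $i_*\Psi_f^u$ are exact $\Perv_{ext}(X^\X)\to\Perv(X)$, a diagram chase on the functorial short exact sequence (the nine lemma) shows $\Xi_f$ is exact. For $\DD\Xi_f\simeq\Xi_f\DD$ one uses $\DD j_!\simeq j_*\DD$, $\DD j_*\simeq j_!\DD$, $\DD(\CF\otimes f^*\CL_a)\simeq\DD\CF\otimes f^*\CL_a^\vee$ with $\CL_a^\vee\cong\CL_a(a)$, and $\DD\alpha_{a,b}\simeq\beta_{b,a}$: Verdier duality then carries the filtered system $\{\Ker\gamma_{a,a-1}\}$ defining $\Xi_f\CF$ to the one defining $\Xi_f\DD\CF$ (kernels of maps of perverse sheaves passing to cokernels of the dual maps), giving the natural isomorphism $\DD\Xi_f\simeq\Xi_f\DD$; together with Lemma \ref{lem_beilinsonpsiduality} and $\DD i_*\simeq i_*\DD$ this also identifies the two short exact sequences as Verdier duals. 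This reproduces \cite[\nopp 5.5]{morel_beilinson_2018}.

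The one place where a genuine change from \emph{loc. cit.} is needed --- and the main thing to be careful about --- is that when $X_0$ is not quasi-compact the nilpotency order of $t-1$ on $\Psi_f^u\CF$ is only locally bounded, so the colimits above (and the kernels in Lemma \ref{lem_beilinsonpsi}) stabilise only after restriction to an affinoid open. I would therefore carry out every construction and verification above on affinoid opens, where the colimits are eventually constant so that $\Xi_f\CF\in\Perv$ and exactness of the sequences are manifest, and then glue, using that $\Xi_f$ commutes with restriction to opens and that a filtered colimit of perverse sheaves along monomorphisms which is locally eventually constant is again perverse. This is routine but necessary bookkeeping; all the substantive input has been isolated into the lemmas cited above.
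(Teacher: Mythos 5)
Your proposal is correct and takes the same route as the paper, which explicitly defers to Morel's \cite[\nopp 5.5]{morel_beilinson_2018} with the stated modifications (replace $N$ by $t-1$; handle the non-quasi-compact case via colimits/local stabilisation). You have simply spelled out the formal argument that the paper leaves to the reference, and your handling of the nilpotency order being only locally bounded matches the caveat the paper records before stating Lemmas \ref{lem_psiuniistexact}--\ref{lem_Xi}.
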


\begin{definition}
    The functor $\Phi_f^{B}: \Perv(X)\rightarrow\Perv(X_0)$ is defined as $\CF\mapsto i^*H^0(j_!j^*\CF\xrightarrow{\delta+\mathrm{adj}}\Xi_fj^*\CF\oplus\CF\xrightarrow{\epsilon-\mathrm{adj}}j_*j^*\CF)$, with $\Xi_fj^*\CF$ placed in degree $0$. The maps $can'$ and $var'$ are defined as the maps induced by $i_*\Psi_f^uj^*\CF\xrightarrow{\beta}\Xi_fj^*\CF$ and $\Xi_fj^*\CF\xrightarrow{\alpha} i_*\Psi_f^uj^*\CF(-1)$.
\end{definition}

\begin{remark}
    We leave the question of the compatibility of $(\Phi_f^{B}$, $\mathrm{can'}$, $\mathrm{var'})$ with $(\Phi_f^u$, $\mathrm{can}$, $\mathrm{var}(t))$ from Corollary \ref{cor_u_nu_decomp} to the interested reader.
\end{remark}

\begin{lemma}[{\cite[\nopp 6.1]{morel_beilinson_2018}}]\label{lem_phiuniistexact}
    There is a natural isomorphism $\DD\Phi_f^B\simeq\Phi_f^B\DD$, $\Phi_f^B$ is perverse t-exact, and $\Psi_f^u\xrightarrow{\mathrm{can}}\Phi_f^B$ and $\Phi_f^B\xrightarrow{\mathrm{var'}}\Psi_f^u(-1)$ are Verdier dual to each other.
\end{lemma}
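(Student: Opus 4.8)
The plan is to transcribe Beilinson's gluing argument in the form given in \cite[§6]{morel_beilinson_2018}, the only genuinely new ingredients being those already in hand: the two Verdier-dual short exact sequences and the exactness of $\Xi_f$ (Lemma \ref{lem_Xi}), the duality $\Psi_f^u(\DD\CF)(-1)\isoto\DD\Psi_f^u(\CF)$ (Lemma \ref{lem_beilinsonpsiduality}), and the perverse t-exactness of $j_!$ and $j_*$ (Example \ref{ex_j_exact}). Throughout write $C^\bullet(\CF)$ for the three-term complex $j_!j^*\CF\xrightarrow{\delta+\mathrm{adj}}\Xi_fj^*\CF\oplus\CF\xrightarrow{\epsilon-\mathrm{adj}}j_*j^*\CF$ with the middle term in cohomological degree $0$, so that $\Phi_f^B(\CF)=i^*H^0(C^\bullet(\CF))$ by definition.

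For perverse t-exactness (i.e.\ exactness of $\Phi_f^B\colon\Perv(X)\to\Perv(X_0)$) I would first observe that, for $\CF\in\Perv(X)$, every term of $C^\bullet(\CF)$ is perverse ($j_!,j_*$ are perverse t-exact, $\Xi_f$ is exact), that $\delta+\mathrm{adj}$ is a monomorphism of perverse sheaves and $\epsilon-\mathrm{adj}$ an epimorphism (since $\delta$ is mono and $\epsilon$ is epi by Lemma \ref{lem_Xi}), and that $j^*C^\bullet(\CF)$ is acyclic: applying $j^*$ to $0\to j_!\xrightarrow{\delta}\Xi_f\to i_*\Psi_f^u(-1)\to0$ gives $j^*\Xi_f\simeq\mathrm{id}$, so $j^*C^\bullet(\CF)$ is $j^*\CF\xrightarrow{(\mathrm{id},\mathrm{id})}j^*\CF\oplus j^*\CF\xrightarrow{\mathrm{id}-\mathrm{id}}j^*\CF$. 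Hence $C^\bullet(\CF)$, viewed in $\DX$, is perverse and supported on $X_0$, so $H^0(C^\bullet(\CF))\simeq i_*\Phi_f^B(\CF)$ with $\Phi_f^B(\CF)\in\Perv(X_0)$. Exactness of $\Phi_f^B$ is then formal: $\CF\mapsto C^\bullet(\CF)$ is an exact functor from $\Perv(X)$ to complexes of perverse sheaves cohomologically concentrated in degree $0$ (the functors $j^*,j_!,j_*,\Xi_f,\mathrm{id}$ are all exact on perverse sheaves), so a short exact sequence in $\Perv(X)$ produces such a short exact sequence of complexes, and its long exact cohomology sequence — with vanishing boundary maps, as $H^{\neq0}(C^\bullet(-))=0$ — is short exact; one concludes by applying $i^*$, which is exact on the subcategory of perverse sheaves supported on $X_0$.

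For the self-duality, since $i$ is a closed immersion $i_*$ commutes with $\DD$, so it suffices to identify $\DD C^\bullet(\CF)$ with $C^\bullet(\DD\CF)$ as complexes, whence $\DD H^0(C^\bullet(\CF))\simeq H^0(C^\bullet(\DD\CF))$ and $\DD\Phi_f^B\simeq\Phi_f^B\DD$. The contravariant functor $\DD$ reverses $C^\bullet(\CF)$ and, via $\DD j_!\simeq j_*\DD$, $\DD j^*\simeq j^*\DD$, $\DD\Xi_f\simeq\Xi_f\DD$, carries its objects to those of $C^\bullet(\DD\CF)$; that it carries the differential $\delta+\mathrm{adj}$ to $\epsilon-\mathrm{adj}$ and conversely is precisely the Verdier duality between the two sequences of Lemma \ref{lem_Xi}, combined with the compatibility of $\DD$ with the adjunction morphisms $j_!j^*\to\mathrm{id}\to j_*j^*$. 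Finally, $\mathrm{can}'$ and $\mathrm{var}'$ are induced by the inclusion $\beta\colon i_*\Psi_f^u\hookrightarrow\Xi_f$ and the projection $\alpha\colon\Xi_f\twoheadrightarrow i_*\Psi_f^u(-1)$ of the middle term of $C^\bullet$; under the identification $\DD C^\bullet(\CF)\simeq C^\bullet(\DD\CF)$ together with $\DD\Psi_f^u(\CF)\simeq\Psi_f^u(\DD\CF)(-1)$, the dual $\DD\beta$ is identified with $\alpha$ (for $\DD\CF$) — the remaining half of the duality in Lemma \ref{lem_Xi} — so $\DD(\mathrm{can}')$ is identified with $\mathrm{var}'$ for $\DD\CF$ up to the Tate twist, which is the asserted Verdier duality.

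The main obstacle is the bookkeeping in the last step: verifying that $\DD$ interchanges the two differentials of $C^\bullet$ with the correct signs and Tate twists, which rests on the compatibility of the duality of the $\Xi_f$-sequences with the units and counits of $(j_!,j^*)$ and $(j^*,j_*)$; following \cite[§6]{morel_beilinson_2018} this is routine but must be done carefully. A context-specific point — causing no real difficulty — is that the colimits over $a$ defining $\Xi_f$ and $\Psi_f^u$ stabilize locally on $X_0$, since $t-1$ acts locally nilpotently there, so they commute with $\DD$ and with $i^*$; this is the ``assume $X$ quasi-compact or insert $\varinjlim$'' caveat noted before the lemma.
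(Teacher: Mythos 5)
Your proposal is correct and follows the same route as the paper, which itself only cites {\cite[\nopp 6.1]{morel_beilinson_2018}} and notes that the argument there carries over once the inputs (Lemma \ref{lem_Xi}, Lemma \ref{lem_beilinsonpsiduality}, Example \ref{ex_j_exact}) have been established; you have written out exactly those steps, including the two standing adaptations the paper flags (replacing $N$ by $t-1$, and local stabilisation of the $\varinjlim_a$'s). The one place to be slightly more careful in the final write-up is the sign/twist bookkeeping you already flag: in $\DD C^\bullet(\CF)\simeq C^\bullet(\DD\CF)$ the identification $\DD\Psi_f^u(\CF)\simeq\Psi_f^u(\DD\CF)(-1)$ from Lemma \ref{lem_beilinsonpsiduality} is what makes $\DD$ of the sequence $0\to j_!\to\Xi_f\to i_*\Psi_f^u(-1)\to0$ literally land on the sequence $0\to i_*\Psi_f^u\to\Xi_f\to j_*\to0$ for $\DD\CF$ (with no stray Tate twist), and this same identification is what turns $\DD(\mathrm{can}')=\DD\beta$ into $\mathrm{var}'=\alpha$ for $\DD\CF$ \emph{with} the $(-1)$ in its target; worth displaying that chase explicitly.
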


One may now apply the original proof in \cite[\nopp 3.1]{beilinson_how_1987} to obtain Beilinson's Gluing in our context:

\begin{proposition}[{\cite[\nopp 3.1]{beilinson_how_1987}}, Beilinson's Gluing]\label{prop_beilinsonglue}
    Let $f: X\rightarrow\AAA^1$ be a map of rigid analytic varieties. Let $GD(X,f)$ be the category of gluing data, defined as follows: the objects are quadruples $(\CF, \CG, u, v)$ where $\CF\in \Perv_{ext}(X^\X)$, $\CG\in \Perv(X_0)$, and $u: \Psi^u_f(\CF)\rightarrow\CG$, $v: \CG\rightarrow\Psi^u_f(\CF)(-1)$ are maps such that $vu=t-1$; the morphisms are the obvious ones.\\
    Then, $\mathrm{Perv}(X)$ and $GD(X,f)$ are equivalent via the functors $F: \mathrm{Perv}(X)\rightarrow GD(X,f)$, $\CF\mapsto (\CF|_{X^\times},\Phi^B_f(\CF), can', var')$, and $G: GD(X,f)\rightarrow\mathrm{Perv}(X)$, $(\CF, \CG, u, v)\mapsto H^0(i_*\Psi^u_f(\CF)\xrightarrow{\alpha+u} \Xi_f(\CF)\oplus i_*\CG\xrightarrow{{\beta-v}} i_*\Psi^u_f(\CF)(-1))$, where $\Xi_f(\CF)\oplus i_*\CG$ is placed in degree $0$ and $\alpha$ and $\beta$ are as in Lemma \ref{lem_Xi}.
\end{proposition}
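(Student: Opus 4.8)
\emph{Proof plan.} The plan is to reproduce Beilinson's proof of \cite[\nopp 3.1]{beilinson_how_1987} (whose algebraic argument is also exposited in \cite{morel_beilinson_2018}) essentially verbatim, all of whose inputs are now in place: the perverse $t$-exactness of $\Psi_f^u$ (Lemma \ref{lem_psiuniistexact}) and of $\Phi_f^B$ (Lemma \ref{lem_phiuniistexact}), the exactness of $\Xi_f$ together with the two short exact sequences of Lemma \ref{lem_Xi}, and the identity $\alpha\circ\beta=t-1$ on $i_*\Psi_f^u$ that is part of the structure of the maximal extension. The only modifications are the two flagged before Lemma \ref{lem_psiuniistexact}: one replaces the logarithm-of-monodromy $N$ of \textit{loc. cit.} by $t-1$ throughout, and, because $t-1$ is only \emph{locally} nilpotent on $\Psi_f^u$ and $\Xi_f$ when $X_0$ is not quasi-compact (Corollary \ref{cor_u_nu_decomp}), one keeps the colimits $\varinjlim_a$ in the constructions and checks all assertions --- perversity, exactness of the displayed sequences, isomorphy of the comparison maps --- locally on $X_0$, which is legitimate as each of these is a local condition.

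First I would verify that $F$ and $G$ take values in the asserted categories. For $F$: given $\CF\in\Perv(X)$, the restriction $\CF|_{X^\times}$ is extendable by definition, $\Phi_f^B(\CF)\in\Perv(X_0)$ by Lemma \ref{lem_phiuniistexact}, and the relation $\mathrm{var}'\circ\mathrm{can}'=t-1$ on $\Psi_f^u(\CF|_{X^\times})$ is read off from the definitions of $\mathrm{can}'$ and $\mathrm{var}'$ as the maps induced by $i_*\Psi_f^uj^*\CF\xrightarrow{\beta}\Xi_fj^*\CF\xrightarrow{\alpha}i_*\Psi_f^uj^*\CF(-1)$ together with $\alpha\circ\beta=t-1$. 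For $G$: the three-term complex defining $G(\CF,\CG,u,v)$ is a complex of perverse sheaves on $X$ (Lemmas \ref{lem_psiuniistexact}, \ref{lem_Xi} and the $t$-exactness of $i_*$), so its $H^0$ automatically lies in $\Perv(X)$; its two differentials compose to zero precisely because $vu=t-1$ and the composite $i_*\Psi_f^u\to\Xi_f\to i_*\Psi_f^u(-1)$ equals $t-1$, its $H^{-1}$ vanishes because $i_*\Psi_f^u\to\Xi_f$ is injective, and its $H^1$ vanishes because $\Xi_f\to i_*\Psi_f^u(-1)$ is surjective (both from Lemma \ref{lem_Xi}). Functoriality of $F$ and $G$ on morphisms is routine.

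It then remains to produce natural isomorphisms $G\circ F\simeq\mathrm{id}$ and $F\circ G\simeq\mathrm{id}$. Applying $j^*$ to the short exact sequences of Lemma \ref{lem_Xi} gives $j^*\Xi_f=\mathrm{id}$ and $j^*i_*=0$, hence $G(\CF,\CG,u,v)|_{X^\times}\simeq\CF$, so the $X^\times$-component of both composites is the identity and the substance lies in the $X_0$-components. For $G\circ F$, one substitutes $\Phi_f^B(\CF)=i^*H^0\bigl(j_!j^*\CF\to\Xi_fj^*\CF\oplus\CF\to j_*j^*\CF\bigr)$ into the complex defining $G$, obtaining a bicomplex whose total cohomology is computed in two ways; using $0\to j_!\to\Xi_f\to i_*\Psi_f^u(-1)\to0$ and $0\to i_*\Psi_f^u\to\Xi_f\to j_*\to0$ to contract the auxiliary direction, the total object collapses naturally to $\CF$. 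For $F\circ G$ one runs the analogous computation to recover $(\CF,\CG,u,v)$ from $M:=G(\CF,\CG,u,v)$, the compatibility of $\mathrm{can}'$ and $\mathrm{var}'$ with $u$ and $v$ dropping out of the explicit isomorphisms. I expect the main difficulty to be bookkeeping rather than conceptual: one must carry the $\varinjlim_a$'s through every diagram chase and confirm at each step that passing to the colimit commutes with the functors in play (the systems become eventually constant locally on $X_0$), and keep sign conventions consistent among $\delta/\epsilon$, $\alpha/\beta$, $\mathrm{can}'/\mathrm{var}'$ and $u/v$; since these are exactly the points already treated in \cite{morel_beilinson_2018} and \cite{beilinson_how_1987}, I would invoke those references for the formal core and write out only the two modifications above.
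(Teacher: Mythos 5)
Your proposal is correct and takes essentially the same approach as the paper: after the preparatory material (Lemmas \ref{lem_psiuniistexact}--\ref{lem_phiuniistexact}), the paper simply states that one may apply Beilinson's original proof, which is precisely what you do, with the same two modifications flagged there (replace the log-of-monodromy $N$ by $t-1$; argue locally on $X_0$ or carry along the colimits when $X_0$ is not quasi-compact). The details you fill in — that $F$ and $G$ land in the asserted categories, the vanishing of $H^{\pm1}$ via the injectivity/surjectivity in Lemma \ref{lem_Xi}, and the computation of $G\circ F$ and $F\circ G$ by contracting the $\Xi_f$ direction using the two short exact sequences — are exactly the steps of Beilinson's argument as exposited in \cite{morel_beilinson_2018}, which the paper is deferring to.
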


\section{Perversity and duality}\label{sec_perversity_duality}

In this section, we discuss perverse t-exactness and duality of nearby and vanishing cycles. The set-up is as in §\ref{sec_intro}, $\Lambda=\FF_{\ell^r}$ or $\ZZ/\ell^r$, with $\ell\neq p$. Recall we denote $\psi_f[-1]$ (resp. $\phi_f[-1]$) by $\Psi_f$ (resp. $\Phi_f$).

\begin{theorem}\label{thm_psi_t_exact_duality}
    Let $f: X\rightarrow\AAA^1$ be a map of rigid analytic varieties.\\
    (1) Assume $\Lambda=\FF_{\ell^r}$. Then $\Psi_f$ is perverse t-exact in the following sense: 
    for every $\CF\in$ $^p\!D^{\leq 0}_{zc}(X^\times)$ and $\CG\in$ $^p\!D^{\geq 0}_{zc}(X^\times)$ which extend to objects of $\DX$, we have $\Psi_f(\CF)\in$ $^p\!D^{\leq 0}_{zc}(X_0)$ and $\Psi_f(\CG)\in$ $^p\!D^{\geq 0}_{zc}(X_0)$.\\
    (2) For $\CF\in$ $D^{(b)}_{zc}(X^\X)$ which extends to an object of $\DX$, there is a canonical isomorphism $g: \Psi_f\mathbb{D}\CF\isoto(\mathbb{D}\Psi_f\CF)(1)$ in $D(X_0)$ commuting with monodromies.
\end{theorem}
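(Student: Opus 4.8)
The plan is to prove both statements at once by isolating the unipotent part. By Corollary~\ref{cor_u_nu_decomp} we have a canonical decomposition $\Psi_f\simeq\Psi_f^u\oplus\Psi_f^{nu}$; on the summand $\Psi_f^u$, part~(1) is Lemma~\ref{lem_psiuniistexact} and part~(2) is Lemma~\ref{lem_beilinsonpsiduality} (the twist ``$(-1)$'' there and ``$(1)$'' here being the same datum up to inverting twists, and both lemmas extending from $\Perv_{ext}(X^\X)$ to extendable bounded complexes by dévissage on the perverse amplitude). So the content lies in the non-unipotent summand, which I would reach by a Kummer base change. For $n\geq1$ prime to $\ell$, set $X_n:=X\times_{\AAA^1,[n]}\AAA^1$ with $[n]\colon\AAA^1\to\AAA^1$, $z\mapsto z^n$, and let $f_n,\pi_n$ be the two projections; then $X_{n,0}=X_0$, the map $\pi_n$ is finite, and $p_n:=\pi_n|_{X_n^\X}$ is the Kummer covering of Definition~\ref{def_van}. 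Applying Lemma~\ref{lem_smoothqcqs}(2) to the finite map $\pi_n$, together with the projection formula, gives a canonical monodromy-compatible isomorphism $\psi_{f\pi_n}(p_n^*\CG)\simeq\psi_f(\pi_{n,*}p_n^*\CG)=\psi_f(\CG\otimes L_n)$, where $L_n:=p_{n,*}\Lambda$ is a self-dual local system of rank $n$ on $X^\X$; and since $L_n$ is pulled back from $\Gm$ along $f$, a projection formula for nearby cycles shows that the monodromy on $\psi_f(\CG\otimes L_n)$ is that of $\psi_f(\CG)$ tensored with the regular representation of $\mu_n$ (inflated along $\mu\twoheadrightarrow\mu_n$). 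Hence $\Psi_{f\pi_n}^u(p_n^*\CG)\simeq\bigl(\psi_f(\CG)\otimes\Lambda[\mu_n]\bigr)^u[-1]$.

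Now choose $n$ (prime to $\ell$) so large that $t^n$ acts unipotently on $\psi_f(\CF)$; this is possible because $\psi_f(\CF)$ is Zariski-constructible with $\Lambda$-finite stalks (Proposition~\ref{prop_phipreserveszc}) and the semisimple part of the monodromy has order prime to $\ell$. After extending scalars so that $\Lambda$ contains the $n$-th roots of unity, $\Lambda[\mu_n]$ splits into characters and one computes $\bigl(\psi_f(\CF)\otimes\Lambda[\mu_n]\bigr)^u\simeq\bigoplus_{\zeta^n=1}\psi_f(\CF)_\zeta=\psi_f(\CF)$ (generalized $\zeta$-eigenspaces for the monodromy), i.e.\ $\Psi_{f\pi_n}^u(p_n^*\CF)\cong\Psi_f(\CF)$ as objects. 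For part~(1) this finishes the job: by the standard dévissage (perverse truncation triangles, using that $j^*$ is perverse t-exact so extendability is preserved) one reduces to $\CF\in\Perv_{ext}(X^\X)$; then $p_n^*\CF\in\Perv_{ext}(X_n^\X)$ since $p_n$ is finite étale, so, replacing it by its intermediate extension $j_{!*}(p_n^*\CF)\in\Perv(X_n)$ (which exists because $j_!,j_*$ are perverse t-exact on $X_n$ by the argument of Example~\ref{ex_j_exact}), Lemma~\ref{lem_psiuniistexact} gives $\Psi_{f\pi_n}^u(p_n^*\CF)\in\Perv(X_0)$; since perversity is insensitive to faithfully flat extension of coefficients, $\Psi_f(\CF)$ is perverse. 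The ${}^pD^{\geq0}$ half is symmetric, or follows from part~(2).

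For part~(2) I would run the same reduction for the duality isomorphism: $p_n^*\DD_{X^\X}\CF=\DD_{X_n^\X}p_n^*\CF$ as $p_n$ is étale, and $L_n$ is self-dual, so Lemma~\ref{lem_beilinsonpsiduality} applied to $f\pi_n$ and $p_n^*\CF$ yields $\Psi_f\DD\CF\isoto(\DD\Psi_f\CF)(1)$ once the eigenspace dictionary is unwound — the Tate twist matches because $\DD$ exchanges the $\zeta$- and $\zeta^{-1}$-generalized eigenspaces while dualizing a rank-one Kummer sheaf inverts its monodromy. The hard part, and the one I expect to be the main obstacle, is canonicity: the eigenspace decomposition and the auxiliary characters are visibly defined only after enlarging $\Lambda$, whereas a canonical, monodromy-equivariant $g$ over $\Lambda$ is required. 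I would resolve this by Galois descent — the whole construction is natural in $\CF$ and compatible with finite coefficient extension, hence $\mathrm{Gal}(\Lambda'/\Lambda)$-equivariant for $\Lambda'\supseteq\Lambda$ large enough, so it descends — or, more invariantly, by carrying out the argument with $L_n$ and its perfect self-duality pairing, never choosing individual characters. One then checks compatibility of the resulting $g$ with the $\mathrm{can}$ and $\mathrm{var}$ triangles: on the non-unipotent summand $i^*\CF$, $i^!\CF$ and $i^*j_*j^*\CF$ all carry trivial, hence purely unipotent, monodromy, so by Lemma~\ref{lem_T-1} and Lemma~\ref{lem_iwasawa_basics}(2) the diagram of Definition~\ref{def_sp_can_var} collapses there to the single isomorphism $\mathrm{can}\colon\Psi_f^{nu}\isoto\Phi_f^{nu}$, and the full self-dual $\mathrm{can}$–$\mathrm{var}$ diagram is assembled from this together with the unipotent case (Lemmas~\ref{lem_Xi} and \ref{lem_phiuniistexact}).
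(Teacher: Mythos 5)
Your proof of part~(1) is correct and, once unwound, is the same argument as the paper's. The paper twists $\CF$ by rank-one Kummer sheaves $\CL^{\lambda_0}$ (after a finite coefficient extension $F'/F$), uses the computation $\Psi_f(\CF'\otimes f^*\CL^{\lambda_0})\simeq\Psi_f(\CF')^{[\lambda_0]}$ to move each generalized eigenspace of the monodromy into the unipotent part, applies Lemma~\ref{lem_psiuniistexact}, and descends. Your Kummer base change $\pi_n$ is the same thing packaged via $L_n$: $\Psi^u_{f\pi_n}(p_n^*\CF)\simeq\Psi^u_f(\CF\otimes L_n)$ and $L_n$ decomposes into the $\CL^\zeta$ after extension of scalars, so you end up doing identically the computation of the paper's Claim. (Minor comment: the dévissage to $\Perv_{ext}(X^\X)$ is unnecessary, since Lemma~\ref{lem_psiuniistexact} is already stated and proved for complexes $\CF\in\Dbzc(X)$ and its proof only needs $j^*\CF\in{}^p\!D^{\leq0}_{zc}$.)

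Part~(2) is where your proposal genuinely diverges and has a real gap. The paper does not reduce to the unipotent part at all. It \emph{defines} $g$ directly via the lax symmetric monoidal structure of $\psi_f$, the pairing $(\DD\CF)\otimes\CF\to\omega_X$, and the comparison $\Psi_ff^!\to f_0^!\Psi_{id}$, following Illusie; this $g$ is canonical and monodromy-equivariant over $\Lambda$ by construction. It then \emph{verifies} that $g$ is an isomorphism by a dévissage (resolution of singularities, proper pushforward, analytification) reducing to the algebraic simple-normal-crossings case. Your proposal inverts the logic: you want to \emph{manufacture} $g$ out of the unipotent duality (Lemma~\ref{lem_beilinsonpsiduality}) by Kummer transfer. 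That transfer is where things break. The identification $\Psi^u_{f\pi_n}(p_n^*\CF)\cong\Psi_f(\CF)$ you use is, as you yourself write, an isomorphism only ``as objects'': on the left side only the residual $\ZZ_\ell(1)$-action survives (the $P$-part of the monodromy has been absorbed into the Kummer cover), whereas $\Psi_f(\CF)$ carries the full $\hat\ZZ(1)$-action. A map of $R=\Lambda[[\mu]]$-modules $\bigl(\psi_f(\CF)\otimes\Lambda[\mu_n]\bigr)^P\to\psi_f(\CF)$ can be written down by Frobenius reciprocity, but it is equivariant only for the quotient $\mu/P$-action on the target, not the $\mu$-action. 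So the $g$ you assemble is not visibly a morphism in $D(X_0\bar\X B\mu)$, which is what the statement requires, and it is not clear how to restore the missing $P$-equivariance or the $\Lambda$-rationality by Galois descent without already knowing the answer. This is precisely the unresolved step the paper itself flags in the Remark following Theorem~\ref{thm_psi_t_exact_duality}: even with a canonical $g$ in hand, comparing the induced pairing on $\Psi^u$ with the one from Lemma~\ref{lem_beilinsonpsiduality} is left open. In short, for part~(2) you need a construction of $g$ separate from (and prior to) the proof that it is an isomorphism; your proposal supplies neither a canonical construction nor the monodromy compatibility.
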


\begin{proof}
    (1) We show $\Psi_f$ is perverse right t-exact by reducing to the unipotent part (Lemma \ref{lem_psiuniistexact}). The left t-exactness follows from the same argument. Let $\CF\in$ $^p\!D^{\leq 0}_{zc}(X^\times,F)$ be such that it extends to an object of $D^{(b)}_{zc}(X,F)$. We want to show $\Psi_f(\CF)\in$ $^p\!D^{\leq 0}_{zc}(X_0,F)$. We may assume $X$ is quasi-compact. Fix a topological generator $t$ of $\pi^{\mathrm{fin}}_1(\Gm,1)$. Then, there exists a finite field extension $F'/F$ such that the $t$ action on $\Psi_f(\CF)$ has a Jordan block decomposition after the extension of scalars $(-)\otimes_F F'$, \textit{i.e.}, there exists a canonical decomposition $\Psi_f(\CF)\otimes_F F'\simeq \oplus_{\lambda\in F'^\X}(\Psi_f(\CF)\otimes_F F')^\lambda$ (note this is a finite direct sum), such that $t-\lambda$ acts nilpotently on $(\Psi_f(\CF)\otimes_F F')^\lambda$ and invertibly on other components. In the following, we abbreviate $(-)\otimes_F F'$ by $(-)'$.\\\\
    \underline{Claim}. Let $\CG\in D^{(b)}_{zc}(-,F)$. Then,\\ 
    (i) there is a canonical isomorphism $(\DD\CG)'\isoto\DD(\CG')$. Here the first $\DD$ is over $F$ and the second is over $F'$.\\
    (ii) $\CG\in$ $^p\!D^{\leq 0}_{zc}(-,F)$ (resp. $^p\!D^{\geq 0}_{zc}(-,F)$) if and only if $\CG'\in$ $^p\!D^{\leq 0}_{zc}(-,F')$ (resp. $^p\!D^{\geq 0}_{zc}(-,F')$).\\
    (iii) There is a canonical isomorphism $\Psi_f(\CG)'\isoto\Psi_f(\CG')$.\\\\ 
    \underline{Proof}. (i) The map is induced from the adjunction $Hom_{F'}((\DD\CG)\otimes_FF',\DD(\CG'))\simeq Hom_{F}(\DD\CG,\DD(\CG'))$ and the natural map $\CG\rightarrow\CG\otimes_F F'$. To check this is an isomorphism, it suffices to check as $F$-sheaves. We have $(\DD\CG)'=\RHom_F(\CG,\omega_F)\otimes_FF'$, and $\DD(\CG')=\RHom_{F'}(\CG\otimes_FF',\omega_{F'})\simeq\RHom_{F}(\CG,\omega_{F'})$ as an $F$-sheaf. The claim then follows from the fact that $\omega_{F'}\simeq\omega_F\otimes_F{F'}$.\footnote{This fact holds in great generality, see \cite[\nopp 6.5.3]{liu_enhanced_2024} and the construction of the dualising complex in our context (\cite[\nopp 3.21]{bhatt_six_2022}).}\\(ii) The statement is clear for $^p\!D^{\leq 0}_{zc}(-,F)$, and for $^p\!D^{\geq 0}_{zc}(-,F)$ this follows from (i).\\
    (iii) The map is constructed similarly as in (i). To check it is an isomorphism, it suffices to check as $F$-sheaves. Since $\underline{F'}$ is isomorphic to a finite direct sum of $\underline{F}$, this is clear. This completes the proof of the Claim.\\\\
    By (ii), it suffices to show $\Psi_f(\CF')\in$ $^p\!D^{\leq 0}_{zc}(X_0,F')$. For $\lambda_0\in F'$, let $\CL^{\lambda_0}$ denote the rank $1$ $F'$-local system on $\Gm$ corresponding to the representation $\pi^{fin}_1(\Gm,1)\rightarrow \mathrm{Aut}_{F'}(F')$, $t\mapsto \lambda_0$. Consider $\Psi_f(\CF'\otimes_{F'} f^*\CL^{\lambda_0})$ (from now on, the tensors will be over $F'$ and we omit the subscript).\\\\
    \underline{Claim}. $\Psi_f(\CF'\otimes f^*\CL^{\lambda_0})\simeq\Psi_f(\CF')^{[\lambda_0]}$, where $(-)^{[\lambda_0]}$ denotes twisting the monodromy action by multiplying by $\lambda_0$.\\\\
    \underline{Proof}. (Notations as in Definition \ref{def_van}.) Let $n\in\ZZ_{\geq1}$ be the smallest such that $\CL$ is trivialised by the $n$-th Kummer covering $e_n$. Then $\Psi_f(\CF'\otimes f^*\CL^{\lambda_0})\simeq i^*\varinjlim_{n|m} j_{m*}j_m^*(\CF'\otimes f^*\CL^{\lambda_0})[-1]\simeq i^*\varinjlim_{n|m} j_{m*}((j_m^*\CF')\otimes (j_{mn}^*\underline{F}'_{X_n^\X}))[-1]$, where $j_{mn}$ is the transition map $X^\X_m\rightarrow X^\X_n$, and the last map is induced by the unique isomorphism $e_n^*\CL\simeq\underline{F}'_{\Gm}$ which restricts to $id$ at $1$. So $\Psi_f(\CF'\otimes f^*\CL^{\lambda_0})\simeq\Psi_f(\CF')$ as sheaves. Note $\underline{F}'_{X_n^\X}$ is equipped with an $\mathrm{Aut}(e_n)$-action corresponding to $\lambda_0$, it follows that the monodromy on the right-hand side is twisted by $\lambda_0$. This completes the proof of the Claim.\\\\ 
    It follows that $\Psi_f(\CF'\otimes f^*\CL^{\lambda_0})^\lambda\simeq\Psi_f(\CF')^{\lambda/\lambda_0}$. Take $\lambda=1$, we get $\Psi^u_f(\CF'\otimes f^*\CL^{\lambda_0}) \simeq \Psi_f(\CF')^{1/\lambda_0}$. By Lemma \ref{lem_psiuniistexact}, this lies in $^p\!D^{\leq 0}_{zc}(X_0,F')$. As this holds for all $\lambda_0$, we get $\Psi_f(\CF')\in$ $^p\!D^{\leq 0}_{zc}(X_0,F')$.\\\\
    (2) The map $g: \Psi_f\mathbb{D}\CF\isoto(\mathbb{D}\Psi_f\CF)(1)$ is defined as in \cite[\nopp 4.3]{illusie_expose_1994}, which we recall: by the tensor-Hom adjunction, it suffices to construct a map $\Psi_f(\DD\CF)\otimes\Psi_f(\CF)\rightarrow\omega_{X_0}(1)$. The lax symmetric monoidal structure on $\psi_f$ and the canonical pairing $(\DD\CF)\otimes\CF\rightarrow\omega_X$ induces $\Psi_f(\DD\CF)\otimes\Psi_f(\CF)[1]\xrightarrow{\mathrm{lsym}}\Psi_f((\DD\CF)\otimes\CF)\xrightarrow{\mathrm{pairing}}\Psi_f\omega_X$. As $\omega_X\simeq f^!\omega_{\AAA^1}$, it remains to construct a map $m: \Psi_ff^!\omega_{\AAA^1}\rightarrow f_0^!\Psi_{id}\omega_{\AAA^1}$. Its composition with $f_0^!\Psi_{id}\omega_{\AAA^1}\simeq f_0^!\omega_{\{0\}}(1)[1]\simeq \omega_{X_0}(1)[1]$ gives the desired map. Here $f_0: X_0\rightarrow \{0\}$ is the restriction of $f$, and the first isomorphism is induced by $\mathrm{sp}: i^*\omega_{\AAA^1}[-1]\isoto\Psi_{id}\omega_{\AAA^1}$. The map $m$ is obtained as the $\varinjlim$ of the following composition map on finite levels (notations as in Definition \ref{def_van}): $i^*j_{n*}j_n^*f^!\simeq i^*j_{n*}f^!e_{n}^*\simeq i^*f_0^!e_{n*}e_{n}^*\rightarrow f_0^!i_0^*e_{0n*}e_{0n}^*$. Here $i_0$ denotes the inclusion $\{0\}\hookrightarrow\AAA^1$.\\\\
    Denote the cone of this map by $\CCC(\CF)$. One can check that the formation of $\CCC(\CF)$ commutes with proper pushforwards (\cite[\nopp 4.3.7]{illusie_expose_1994}) and analytification (use \cite[\nopp 3.16]{bhatt_six_2022}). We show that this is an isomorphism by reducing to the case of simple normal crossings and then apply comparison with the algebraic case, where the result is well-known (see, \textit{e.g.}, \cite[\nopp 3.1]{lu_duality_2019}).\\\\
    We may assume $X$ is quasi-compact. By induction on the amplitude, we reduce to the case $\CF\in\mathrm{Sh}_{zc}(X)$. As the claim clearly holds for sheaves supported on $X_0$, we may assume $\CF_{X_0}=0$. Replace $X$ by $\mathrm{supp}(\CF)$, we may assume $\mathrm{supp}(\CF)=X$. Note that $X_0$ is now a divisor. Then, there exists a dense Zariski open $j: U\subseteq X$ with complement $Z\hookrightarrow X$, such that $Z$ contains $X_0$, $U$ is smooth, and $\CF$ is lisse on $U$. By standard dévissage and induction on $\dim \mathrm{supp}(\CF)$, we may assume $\CF=j_!\CF_U$. Apply resolution of singularities (\cite[\nopp 1.1.13.(i), 1.1.11]{temkin_functorial_2018}) to $Z\hookrightarrow X$, we get a proper surjective map $\pi: X'\rightarrow X$ such that $X'$ is smooth, $\pi$ is an isomorphism over $U$, and $Z':=X'\X_X Z$ is strictly monomial. Denote $j': U':=X'\X_X U\hookrightarrow X'$. Then $\CF\simeq \pi_*(j'_!\CF_{U'})$, and it suffices to prove $\CCC(j'_!\CF_{U'})=0$. In summary, we have reduced to the following situation (separate notations from above):\\\\
    Let $f: X\rightarrow\AAA^1$ be a map of rigid analytic varieties, with $X$ quasi-compact and smooth. Let $Z=\Sigma_{i\in I} H_i \hookrightarrow X$ be a simple normal crossings divisor for some finite set $I$. Assume $X_0:=f^{-1}(0)$ is of the form $\Sigma_{i\in J}n_iH_i$, for a subset of $J$ of $I$ and $n_i\in\ZZ_{\geq 1}$. Denote $j: U=(X-Z)\hookrightarrow X$. Given an $\CF=j_!\CL$, where $\CL$ is a local system in degree $0$ on $U$, we would like to show $\CCC(\CF)=0$. As $\CCC(\CF)=0$ is Zariski-constructible (Proposition \ref{prop_phipreserveszc}), it suffices to show its stalks are $0$ at each classical point $x\in X_0$. Suppose it is contained in $r$ irreducible components of $X_0$ and $r+r'$ irreducible components of $Z$. Then, there exist functions $(T_1,...,T_n)$ on some open neighbourhood $V\subseteq X$ of $x$, such that $V\xrightarrow{(T_1,...,T_n)}\AAA^n$ is \etale, $f_V=uT_1^{n_1}...T_r^{n_r}$ with $u$ a unit on $V$, $Z\cap V=V(T_1...T_{r+r'})$, and $x=V(T_1...T_n)$. Pass to an \etale neighbourhood of $x\in X$ as in the diagram in the proof of Proposition \ref{prop_phipreserveszc} and modify $T_1$, we may assume $f_V=T_1^{n_1}...T_r^{n_r}$. As \etale maps are local isomorphisms at classical points, we may further assume $X=D^n=\Spa(K\langle T_1,...,T_n\rangle)$, then $U=X-V(T_1...T_{r+r'})$, and $Z$ and $f$ are given by the same formulas as above. $\CL$ is trivialised by some \etale covering $Y\rightarrow U$. By Abhyankar’s Lemma in our context (\cite[Proposition 2.1]{li_logarithmic_2019}, take $S$ there to be $H_1\cap...\cap H_{r+r'}$), after base-change by a Kummer covering $X':=\Spa(K\langle z_1,...,z_n\rangle)\rightarrow X$, $\{z_1^N,...,z^N_{r+r'},z_{r+r'+1},...,z_n\}\mapsfrom \{T_1,...,T_n\}$, $Y\rightarrow U$ becomes split over a neighbourhood of $0\in X'$. This implies that, in some polydisc open neighbourhood $W$ of $x$ in $X$, $\CL$ is trivialised by the Kummer covering $X'_W$. On $W$, $\CL$ is the descent of some constant sheaf on $X'_W$, with descent data given by some representation of $\mathrm{Aut}(X'_W/W)$. The same covering and representation determines a local system $\widetilde{\CL}$ on $\AAA^n-V(T_1...T_{r+r'})$, which is isomorphic to $\CL$ on $W$. It suffices to show $\CCC(\widetilde{\CL})=0$. Then the situation is algebraisable, and the claim follows from its well-known algebraic counterpart.
\end{proof}

\begin{remark}
    Here is a possible alternative approach to proving the map $g: (\Psi_f\DD)(1)\rightarrow\DD\Psi_f$ is an isomorphism when $\Lambda=\FF_{\ell^r}$: by perverse t-exactness, it suffices to consider perverse sheaves. We want to show the pairing $\Psi_f(\DD\CF)\otimes\Psi_f(\CF)\rightarrow\omega_{X_0}(1)$ induced by $g$ is perfect. Passing to a finite extension of scalars, we may assume $\Psi_f(\CF)$ and $\Psi_f(\DD\CF)$ completely decompose as $\Psi_f(\CF)\simeq \oplus_{\lambda\in F^\X}\Psi_f(\CF)^{\lambda}$ and $\Psi_f\DD(\CF)\simeq\oplus_{\lambda\in F^\X}(\Psi_f\DD(\CF))^{\lambda}$. Since the pairing is $\pi^{\mathrm{fin}}_1(\Gm,1)$-equivariant, the induced pairing $\Psi_f(\DD\CF)^{\lambda}\otimes\Psi_f(\CF)^{\lambda'}\rightarrow\omega_{X_0}(1)$ is zero unless $\lambda\lambda'=1$. Twist $\CF$ by $\CL^{\lambda_0}$ as in Theorem \ref{thm_psi_t_exact_duality}.1, we reduce to showing $\Psi^u_f(\DD\CF)\otimes\Psi^u_f(\CF)\rightarrow\omega_{X_0}(1)$ is perfect. The proof will be complete as soon as one can show this pairing coincides with (or is homotopic to) the pairing induced by Lemma \ref{lem_beilinsonpsiduality}.
\end{remark}

We now extend Theorem \ref{thm_psi_t_exact_duality}.1 to the case $\Lambda=\ZZ/\ell^r$, $\ell\neq p$. In the following, we denote $(-)\otimes_{\ZZ/\ell^r}\ZZ/\ell$ by $(-)/\ell$. Recall that an $\CF$ in $\Dbzc(X,\ZZ/\ell^r)$ is called \underline{tor-finite} if $\CF/\ell$ is in $\Dbzc(X,\FF_\ell)$. This is equivalent to $\CF_{\bar{x}}/\ell\in D^b(\FF_\ell)$ for each classical point $\bar{x}\rightarrow X$. The full subcategory of tor-finite sheaves are denotes by $\Dbzctf(X,\ZZ/\ell^r)$. Similarly for $^p\!D^{\leq 0}_{zctf}(X,\ZZ/\ell^r)$ and $^p\!D^{\geq 0}_{zctf}(X,\ZZ/\ell^r)$.

\begin{lemma}\label{lem_j_coho_fini}
    Let $X$ be a quasi-compact rigid analytic variety, $i: Z\hookrightarrow X$ be a Zariski-closed immersion, and $j: U\hookrightarrow X$ be its complement. Then $j_*$ has finite cohomological dimension. It follows that for each $\CF\in D^b_{zc}(X,\ZZ/\ell^r)$, we have canonical isomorphisms $(j_*\CF_U)/\ell\isoto j_*(\CF_U/\ell)$ and $(i^!\CF)/\ell\isoto i^!(\CF/\ell)$. In particular, if $\CF\in D^b_{zctf}(U,\ZZ/\ell^r)$ can be extended to an object of $D^b_{zc}(X,\ZZ/\ell^r)$ (resp. $\CG\in D^b_{zctf}(X,\ZZ/\ell^r)$), then $j_*\CF\in D^b_{zctf}(X,\ZZ/\ell^r)$ (resp. $i^!\CF\in D^b_{zctf}(Z,\ZZ/\ell^r)$).
\end{lemma}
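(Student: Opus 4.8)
The plan is to establish the lemma in three stages: the cohomological boundedness of $j_*$; the compatibility of $j_*$ and $i^!$ with the functor $(-)/\ell$; and finally the tor-finiteness assertion, which becomes formal once the first two stages are in hand.

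\smallskip

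For the first stage, I would run the argument in the footnote to the proof of Proposition \ref{prop_phipreserveszc}, which treats $j\colon X^\times\hookrightarrow X$, in the generality of an arbitrary Zariski-closed $Z$. Since the cohomological dimension of $j_*$ can be checked locally on $X$, I may assume $X=\Spa(A)$ is affinoid of dimension $d$ and $Z=V(g_1,\dots,g_s)$ for finitely many $g_k\in A$ (possible as $A$ is Noetherian); the same holds on any affinoid open of $X$. Then $U=X\setminus Z=\bigcup_m W_m$ is an increasing union of the quasi-compact opens $W_m=\{\max_k|g_k|\geq\epsilon_m\}$ with $\epsilon_m\to 0$ in $|K^\times|$, and for a torsion sheaf $\CF$ in degree $0$ on $U$ we have $R\Gamma(W_m,\CF)\in D^{[0,2d]}$ uniformly in $\CF$ by Huber's cohomological dimension bound (\cite[\nopp 2.8.3]{huber_etale_1996}), hence $R\Gamma(U,\CF)\simeq R\varprojlim_m R\Gamma(W_m,\CF)\in D^{[0,2d+1]}$. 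Sheafifying over affinoid opens of $X$ this gives $R^q j_*\CF=0$ for $q>2d+1$, so $j_*$ has cohomological dimension $\leq 2\dim X+1$; using the triangle $i^!\to i^*\to i^*j_*j^*$ it follows that $i^!$ has finite cohomological dimension too.

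\smallskip

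For the second stage, fix the natural projection-formula map $\mathrm{pr}_M\colon (j_*\CG)\otimes_{\ZZ/\ell^r}M\to j_*(\CG\otimes_{\ZZ/\ell^r}M)$ (built from the unit and counit of $(j^*,j_*)$), functorial in a complex $M$ of $\ZZ/\ell^r$-modules; it is an isomorphism for $M=\ZZ/\ell^r$, hence, by shifts and cones, for any perfect $M$. Now take $M=\ZZ/\ell$. It is not perfect over $\ZZ/\ell^r$ when $r\geq 2$, so $\CG\otimes_{\ZZ/\ell^r}\ZZ/\ell$ is generally unbounded below; but $\ZZ/\ell$ has a resolution $P_\bullet\to\ZZ/\ell$ by finite free $\ZZ/\ell^r$-modules in degrees $\leq 0$ (the $2$-periodic one), the brutal truncations $P_{\geq -n}$ are perfect, and they fit into triangles $\ZZ/\ell\to P_{\geq-n}\to Q_n\to$ with $Q_n$ in degrees $\leq -n$. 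For $\CG$ bounded, right $t$-exactness of $\otimes$ together with the cohomological bound $N_0$ on $j_*$ show that $(j_*\CG)/\ell\to(j_*\CG)\otimes P_{\geq-n}$ and $j_*(\CG/\ell)\to j_*(\CG\otimes P_{\geq-n})$ are both isomorphisms in a range of cohomological degrees that exhausts $\ZZ$ as $n\to\infty$; since $(j_*\CG)\otimes P_{\geq-n}\isoto j_*(\CG\otimes P_{\geq-n})$ (perfect case) and everything is compatible with $\mathrm{pr}_{\ZZ/\ell}$ by naturality, this forces $\mathrm{pr}_{\ZZ/\ell}$ to be an isomorphism on every $\CH^k$, giving $(j_*\CF_U)/\ell\isoto j_*(\CF_U/\ell)$. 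For $i^!$ I would use $i^!\CF\simeq\fib(i^*\CF\to i^*j_*j^*\CF)$: the functors $i^*$ and $j^*$ commute with $(-)/\ell$ tautologically (pullbacks are monoidal), $j_*$ does by what was just proved, and $(-)/\ell$ commutes with fibres as it is exact; this yields $(i^!\CF)/\ell\isoto i^!(\CF/\ell)$.

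\smallskip

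The third stage is then formal: for $\CF\in\Dbzctf(U,\ZZ/\ell^r)$ extending to $\widetilde\CF\in\Dbzc(X,\ZZ/\ell^r)$ we have $j_*\CF=j_*j^*\widetilde\CF$, which is bounded by the first stage and has Zariski-constructible cohomology sheaves by the six-functor formalism (\cite{bhatt_six_2022}), so $j_*\CF\in\Dbzc(X,\ZZ/\ell^r)$; by the second stage $(j_*\CF)/\ell\simeq j_*(\CF/\ell)$, and here $\CF/\ell\in\Dbzc(U,\FFl)$ (tor-finiteness of $\CF$) extends to $\Dbzc(X,\FFl)$ (truncate $\widetilde\CF/\ell$), so $j_*(\CF/\ell)\in\Dbzc(X,\FFl)$ by the first/second stages applied over $\FFl$; hence $j_*\CF$ is tor-finite. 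The statement for $i^!$ on $\Dbzctf(X,\ZZ/\ell^r)$ is identical. The main obstacle is the second stage: one must control the unbounded-below complex $\CG\otimes_{\ZZ/\ell^r}\ZZ/\ell$, for which the naive projection formula fails, and the key point is exactly that finite cohomological dimension of $j_*$ makes the perfect-coefficient approximations converge degree by degree; the first stage is a routine upgrade of an argument already in the paper, and the third is bookkeeping.
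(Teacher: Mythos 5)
Your proof is essentially correct, and in the central step it takes a genuinely different route from the paper. One small imprecision in the first stage: $W_m=\{\max_k|g_k|\geq\epsilon_m\}$ is a finite union of the $s$ affinoids $\{|g_k|\geq\epsilon_m\}$, not itself affinoid in general, so Huber's bound $D^{[0,2d]}$ from \cite[\nopp 2.8.3]{huber_etale_1996} applies only to the pieces and their finite intersections; one needs a Čech spectral sequence to deduce the (still uniform, still finite) bound $D^{[0,2d+s-1]}$ for $R\Gamma(W_m,\CF)$, which is exactly the care the paper's proof takes. The conclusion of uniformly finite cohomological dimension is unaffected. For the second stage, your argument via the projection formula for perfect coefficients, the brutal truncations $P_{\geq -n}$ of the $2$-periodic $\ZZ/\ell^r$-resolution of $\FF_\ell$, and degree-by-degree convergence forced by the cohomological bound on $j_*$ is correct (left-completeness of the derived $t$-structure is used implicitly, and $j_*$ being a right adjoint commutes with the relevant Postnikov limits). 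The paper instead reduces by dévissage to sheaves of the form $f_*\CG$ with $\CG\in\mathrm{Sh}_{zc}(X,\FF_\ell)$, where $f_*$ is restriction of scalars along $\ZZ/\ell^r\to\FF_\ell$, and then evaluates $f^*j_*f_*\CG$ using the formula $f^*f_*(-)\simeq\bigoplus_{i\in\NN}(-)[i]$ — which comes from the same periodic resolution — and the same finite-cohomological-dimension input. Both proofs therefore rest on the same two pillars; yours is organised around a perfect-complex approximation of the coefficient ring, the paper's around a filtration of the sheaf. Your version is somewhat more self-contained in that it avoids the explicit $f^*f_*\simeq\bigoplus(-)[i]$ identification, at the cost of a slightly more delicate limit argument on possibly unbounded-below objects. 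The third stage is the same bookkeeping in both.
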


\begin{proof}
    Let $\CF$ be a torsion abelian sheaf on $U$. We would like to show $(j_*\CF)_{\bar{x}}\in D(\ZZ)$ is bounded uniformly in $\CF$ and the geometric point $\bar{x}\rightarrow Z$. As $(j_*\CF)_{\bar{x}}\simeq \varinjlim_V R\Gamma(V\X_X U,\CF)$ where $V$ ranges over \etale neighbourhoods of $\bar{x}$ in $X$, it suffices to bound $R\Gamma(V\X_X U,\CF)$ uniformly in $\CF$, $\bar{x}$, and a cofinal system of $V$. As $Z$ is Zariski-closed, there exists an affinoid open neighbourhood $W=\Spa(A)$ of (the image of) $\bar{x}$ in $X$ and $f_1,...,f_r\in A$, for some minimal $r$ (depending on $W$), such that $Z\cap W=V(f_1,...,f_r)$. Let $\{\epsilon_a\}_{a\in\NN}$ be a sequence in $|K^\X|$ converging to $0$. We may assume $V$ is affinoid and lies over $W$. Denote the pullbacks of $f_1,...,f_r$ to $V$ by $f_1',...,f_r'$. Consider the affinoids $V_{\epsilon_a,i}=\{|f'_i|\geq \epsilon_a\}\subseteq V$, and the cover $V_{\epsilon_a}=\cup_iV_{\epsilon_a,i}$. Each $R\Gamma(V_{\epsilon_a},\CF)$ is in $D^{[0,2\dim(X)+r-1]}(\ZZ)$ by the Čech spectral sequence and the bound on cohomological dimension in \cite[\nopp 2.8.3]{huber_etale_1996}. Since $V\X_X U=\cup_{a\rightarrow\infty}V_{\epsilon_a}$, we get $R\Gamma(V\X_X U,\CF)\simeq R\varprojlim_a R\Gamma(V_{\epsilon_a},\CF)$ is in $D^{[0,2\dim(X)+r]}(\ZZ)$. $r$ depends only on $W$, so is globally bounded by the quasi-compactness of $X$.\\\\
    To see the “It follows” part, first note that the statement about $i^!$ will follow from that about $j_*$ and the distinguished triangle $i^!\CF\rightarrow i^*\CF\rightarrow i^*j_*\CF_U\rightarrow$, so it suffices to show the canonical map $(j_*\CF_U)/\ell\rightarrow j_*(\CF_U/\ell)$ is an isomorphism. In the following, denote by $^o\!f^*:\mathrm{Sh}(X,\ZZ/\ell^r)\rightleftarrows\mathrm{Sh}(X,\FF_\ell):$ $^o\!f_*$ the map between the topoi, and by $f^*$, $f_*$ the derived functors (so $(-)/\ell=f^*(-)$). By induction on the amplitude we may assume $\CF\in\mathrm{Sh}_{zc}(X,\ZZ/\ell^r)$. Then $\CF$ can be written as a finite successive extension of objects of the form $f_*\CG$, with $\CG\in\mathrm{Sh}_{zc}(X,\FF_\ell)$. It suffices to prove $f^*j_*f_*\CG\isoto j_*f^*f_*\CG$. Note: (i) $f^*f_*(-)\simeq\oplus_{i\in\NN}(-)[i]$ (use the resolution $(\cdots\rightarrow\ZZ/\ell^r\xrightarrow{\ell^{r-1}}\ZZ/\ell^r\xrightarrow{\ell}\ZZ/\ell^r)\isoto\FF_\ell$), and (ii) $\CH^n(j_*\CG[i])=0$ for $i<n-c$, where $c$ is some constant depending only on $X$ and $Z$ in $X$ (because $j_*$ has finite cohomological dimension). We may then compute: $\CH^n(f^*j_*f_*\CG)\simeq\CH^n(f^*f_*j_*\CG)\simeq\CH^n(\oplus_{i\in\NN}j_*\CG[i])\simeq\CH^n(\oplus_{n-c\leq i\leq n}j_*\CG[i])$, and $\CH^n(j_*f^*f_*\CG)\simeq\CH^n(j_*\oplus_{i\in\NN}\CG[i])\simeq\CH^n(\oplus_{n-c\leq i\leq n}j_*\CG[i])$ (the last step uses (ii) and a standard spectral sequence argument). This finishes the proof.
\end{proof}

\begin{lemma}\label{lem_perv_check_mod_l}
    Let $X$ be a rigid analytic variety, and $\CF\in\Dbzctf(X,\ZZ/\ell^r)$. Then $\CF\in$ $^p\!D^{\leq 0}_{zctf}(X,\ZZ/\ell^r)$ if and only if $\CF/\ell\in$ $^p\!D^{\leq 0}_{zc}(X,\FF_\ell)$. 
\end{lemma}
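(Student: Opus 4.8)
The plan is to reduce the perversity condition to a stalkwise statement and then apply an elementary fact over the Artinian ring $\ZZ/\ell^r$. That fact is: for any $\ZZ/\ell^r$-module $M$, $M\otimes_{\ZZ/\ell^r}\FF_\ell=0$ if and only if $M=0$ (if $M=\ell M$ then $M=\ell^r M=0$); consequently, for any bounded-above complex $C$ of $\ZZ/\ell^r$-modules, $C\in D^{\leq n}$ if and only if $C\otimes^L_{\ZZ/\ell^r}\FF_\ell\in D^{\leq n}$ --- the forward implication is right $t$-exactness of $\otimes^L$, and for the reverse one considers the top non-vanishing cohomological degree $m$ of $C$, for which $\CH^m(C\otimes^L_{\ZZ/\ell^r}\FF_\ell)\simeq\CH^m(C)\otimes_{\ZZ/\ell^r}\FF_\ell\neq0$.

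First I would record the formal properties of $(-)/\ell$: being symmetric monoidal, it commutes with $i^*$ for any locally closed immersion $i$; it carries $\ZZ/\ell^r$-local systems to $\FF_\ell$-local systems (the sheaf-theoretic $\mathrm{Tor}$ of local systems being again local systems with finite stalks); and on tor-finite input it preserves $\Dbzc$ and lisse-ness. In particular $\CF/\ell\in\Dbzc(X,\FF_\ell)$, and any stratification of $X$ into smooth locally closed subspaces adapted to $\CF$, which exists locally by Lemma \ref{lem_stratification}, is also adapted to $\CF/\ell$.

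Then, working locally and fixing such a stratification $\{S_\alpha\}$ with $d_\alpha=\dim S_\alpha$ and $i_\alpha\colon S_\alpha\hookrightarrow X$, I would invoke the characterization of the perverse $t$-structure on $\Dbzc$ by support conditions (\cite{bhatt_six_2022}): a sheaf $\CG$ lies in ${}^p\!D^{\leq 0}_{zc}$ exactly when $i_\alpha^*\CG\in D^{\leq -d_\alpha}$ for every $\alpha$. Since $i_\alpha^*(\CF/\ell)=(i_\alpha^*\CF)/\ell$ and $i_\alpha^*\CF$ has bounded-above stalks (finite complexes of finite $\ZZ/\ell^r$-modules, $\CF$ being Zariski-constructible), the elementary fact applied on stalks gives $i_\alpha^*\CF\in D^{\leq -d_\alpha}$ if and only if $i_\alpha^*(\CF/\ell)\in D^{\leq -d_\alpha}$; ranging over $\alpha$ yields that $\CF\in{}^p\!D^{\leq 0}_{zctf}(X,\ZZ/\ell^r)$ iff $\CF/\ell\in{}^p\!D^{\leq 0}_{zc}(X,\FF_\ell)$.

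I expect the only point needing genuine care is that the support/stalk description of ${}^p\!D^{\leq 0}$ is available in our setting with $\ZZ/\ell^r$-coefficients; this follows from the construction of the perverse $t$-structure in \cite{bhatt_six_2022}. Alternatively, to stay strictly within the tools already developed here, I would replace the stratification step by Lemma \ref{lem_perversecriterion}, reducing the claim to: $R\Gamma(U,\CF)\in D^{\leq 0}$ if and only if $R\Gamma(U,\CF/\ell)\in D^{\leq 0}$ for every affinoid open $U\subseteq X$. This again follows from the elementary $\ZZ/\ell^r$-fact once one knows $R\Gamma(U,\CF/\ell)\simeq R\Gamma(U,\CF)/\ell$, which holds because $R\Gamma(U,-)$ has finite cohomological amplitude on $\Dbzc$ by Affinoid Artin-Grothendieck Vanishing (\cite{bhatt_arc_2021}) and hence commutes with the totalization of a bounded-above free resolution of $\FF_\ell$ tensored with $\CF$. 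In either route, tor-finiteness of $\CF$ is exactly what keeps the relevant complexes bounded above, so that the elementary fact applies.
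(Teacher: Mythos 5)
Your proposal is correct, and you in fact give two proofs; the second ("to stay strictly within the tools already developed here") is essentially the paper's. The paper works from the affinoid-cohomology criterion of Lemma \ref{lem_perversecriterion}.(2), applies the compatibility $R\Gamma(U,\CF)/\ell\isoto R\Gamma(U,\CF/\ell)$ on affinoid opens (citing the argument in the last paragraph of Lemma \ref{lem_j_coho_fini}), and then uses the Nakayama-over-$\ZZ/\ell^r$ observation on the resulting bounded complexes --- exactly your alternative route. The justification you give for the $R\Gamma$ compatibility (finite cohomological amplitude from Affinoid Artin--Grothendieck Vanishing, hence $R\Gamma(U,-)$ commutes with tensoring against a bounded-above free resolution of $\FF_\ell$) is arguably cleaner than the paper's reference to the proof of Lemma \ref{lem_j_coho_fini}, though it proves the same thing.

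Your primary route is genuinely different: instead of global sections on affinoids, you use the support/stalk description of $^p\!D^{\leq 0}_{zc}$ (that for a stratification adapted to $\CF$, membership is equivalent to $i_\alpha^*\CF\in D^{\leq -d_\alpha}$), exploit that $(-)/\ell$ commutes with $i_\alpha^*$ and with taking stalks, and apply the Nakayama fact directly to the bounded finite-module stalk complexes. This sidesteps both Lemma \ref{lem_perversecriterion} and the $R\Gamma/\ell$ commutation lemma; the only input is the support condition from \cite{bhatt_six_2022} together with Lemma \ref{lem_stratification}. It is slightly more self-contained at the level of stalks; the paper's route is more consonant with the machinery it has already set up (in particular it recycles Lemma \ref{lem_j_coho_fini}, whose $j_*/\ell$ compatibility is needed elsewhere in the same section anyway, and Lemma \ref{lem_perversecriterion}, which it just established). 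Both are correct; neither has a gap. The one point worth flagging explicitly in your first route, which you do flag, is that the support characterization of the perverse $t$-structure with $\ZZ/\ell^r$-coefficients is taken from \cite{bhatt_six_2022}; once granted, the argument is complete.
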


\begin{proof}
    This follows from the characterisation (2) in Lemma \ref{lem_perversecriterion} and the following two facts: (i) $R\Gamma(Y,\CG)/\ell\isoto R\Gamma(Y,\CG/\ell)$ for any affinoid $Y$ and $\CG\in D^b_{zc}(Y,\ZZ/\ell^r)$ (this is proved in the same way as in the last paragraph of Lemma \ref{lem_j_coho_fini}); (ii) $M\in D^{\leq0}(\ZZ/\ell^r)$ if and only if $M/\ell\in D^{\leq0}(\ZZ/\ell)$ (because the underived tensor product is right exact).
\end{proof}

\begin{lemma}\label{lem_psi_mod_l}
    Let $f: X\rightarrow\AAA^1$ be a map of rigid analytic varieties, and $\CF\in\Dbzc(X,\ZZ/\ell^r)$. Then there is a canonical isomorphism $\psi_f(\CF)/\ell\isoto \psi_f(\CF/\ell)$. In particular, if $\CF\in\Dbzctf(X)$, then $\psi_f(\CF)\in\Dbzctf(X_0)$. Same for $\phi_f$.
\end{lemma}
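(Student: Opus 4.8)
The plan is to reduce everything to Lemma~\ref{lem_j_coho_fini}. Among the functors occurring in the defining colimit $\psi_f(\CF)=\varinjlim_n i^*j_*p_{n*}p_n^*j^*\CF$, each of $\varinjlim_n$, $i^*$, $p_n^*$, $j^*$ and $p_{n*}$ (finite \etale, so $p_{n*}=p_{n!}$ satisfies the projection formula) commutes with $(-)/\ell$ for formal reasons, since $(-)/\ell$ is symmetric monoidal and a left adjoint. The one functor that does \emph{not} commute with $(-)/\ell$ in general is $j_*$, and this is exactly where the proof has content. The idea is to rewrite the colimit so that $j_*$ gets replaced by $j'_{n*}$ for the open immersion of $X^\times_n$ into a \emph{finite} cover of $X$, where Lemma~\ref{lem_j_coho_fini} supplies the missing commutation.

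First I would reduce to $X$ quasi-compact: the formations of $\psi_f(\CF)/\ell$, $\psi_f(\CF/\ell)$ and of the evident comparison map between them all commute with restriction to open $V\subseteq X$ (using that $\psi$ commutes with open, hence smooth, pullbacks by Lemma~\ref{lem_smoothqcqs}.(1), and that $(-)/\ell$ commutes with pullback), so since Zariski-constructibility and tor-finiteness are local we may assume $X$ affinoid and $\CF\in D^b_{zc}(X,\ZZ/\ell^r)$. Next, recall from Definition~\ref{def_van} that $e_n: \Gm\to\Gm$, $z\mapsto z^n$, extends to the finite map $\bar e_n: \AAA^1\to\AAA^1$; I set $X'_n:=X\times_{\AAA^1,\bar e_n}\AAA^1$, with finite projection $\pi_n: X'_n\to X$ (so $X'_n$ is quasi-compact), containing $X^\times_n$ as the open complement $j'_n: X^\times_n\hookrightarrow X'_n$ of the Zariski-closed fibre of the second projection $X'_n\to\AAA^1$ over $0$. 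Then $j_n=j\circ p_n=\pi_n\circ j'_n$ as maps $X^\times_n\to X$, whence
\[\psi_f(\CF)\;=\;\varinjlim_n i^*j_{n*}j_n^*\CF\;=\;\varinjlim_n i^*\,\pi_{n*}\,j'_{n*}\,(j'_n)^*\,\pi_n^*\CF.\]
Now $(-)/\ell$ commutes with $\varinjlim_n$, with $i^*$ and $\pi_n^*$, and with the finite pushforward $\pi_{n*}$ (projection formula); and applying Lemma~\ref{lem_j_coho_fini} on the quasi-compact $X'_n$ — to the Zariski-closed complement of $X^\times_n$ and to $\pi_n^*\CF\in D^b_{zc}(X'_n,\ZZ/\ell^r)$ (pullback preserves Zariski-constructibility, \cite[\S3]{bhatt_six_2022}) — gives $(j'_{n*}(j'_n)^*\pi_n^*\CF)/\ell\isoto j'_{n*}(j'_n)^*(\pi_n^*(\CF/\ell))$. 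Assembling these canonical base-change maps, and using $\pi_n^*(\CF/\ell)=(\pi_n^*\CF)/\ell$, yields the canonical, monodromy-compatible isomorphism $\psi_f(\CF)/\ell\isoto\psi_f(\CF/\ell)$.

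The statement for $\phi_f$ is then formal, since $\phi_f(\CF)=\cone(i^*\CF\xrightarrow{\mathrm{sp}}\psi_f(\CF))$ and $(-)/\ell$ commutes with $i^*$, with cones, and — by the above — with $\psi_f$, while $\mathrm{sp}$ is built from adjunction units. For the ``in particular'': if $\CF\in D^b_{zctf}(X)$ then $\CF/\ell\in D^b_{zc}(X,\FF_\ell)$, so $\psi_f(\CF/\ell),\phi_f(\CF/\ell)\in D^b_{zc}(X_0,\FF_\ell)$ by Proposition~\ref{prop_phipreserveszc}; combining this with $\psi_f(\CF),\phi_f(\CF)\in D^{(b)}_{zc}(X_0,\ZZ/\ell^r)$ (same proposition) and the isomorphisms just proved gives $\psi_f(\CF),\phi_f(\CF)\in D^{(b)}_{zctf}(X_0)$. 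The main obstacle — and essentially the only nonformal step — is precisely the replacement of $j_*p_{n*}$ by $\pi_{n*}j'_{n*}$ with $\pi_n$ finite, which is what brings Lemma~\ref{lem_j_coho_fini} into play; after that, everything is bookkeeping with base-change maps.
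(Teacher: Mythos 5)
Your proof is correct and takes essentially the same route as the paper's, which simply asserts that $(-)/\ell$ commutes with each functor in the defining colimit, citing Lemma~\ref{lem_j_coho_fini} for the only nontrivial one, namely $j_*$. Your rewriting $j_*p_{n*}=\pi_{n*}j'_{n*}$ is a more explicit way of checking that the hypotheses of that lemma are met; equivalently one can apply it directly to $j:X^\times\hookrightarrow X$ with the sheaf $\pi_{n*}\pi_n^*\CF\in D^b_{zc}(X,\ZZ/\ell^r)$, since $j^*(\pi_{n*}\pi_n^*\CF)\simeq p_{n*}p_n^*j^*\CF$.
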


\begin{proof}
    For $\psi_f$, this follows from its definition and the fact that $(-)/\ell$ commutes with $\varinjlim$, $i^*$, $j_*$ (Lemma \ref{lem_j_coho_fini}), $p_{n*}$, and $p_n^*$. The results for $\phi_f$ then follows from the $can$ triangle.
\end{proof}

\begin{lemma}\label{lem_D_mod_l}
    Let $X$ be a rigid analytic variety, and $\CF\in\Dbzctf(X,\ZZ/\ell^r)$. Then there is a canonical isomorphism $(\DD\CF)/\ell\isoto\DD(\CF/\ell)$. In particular, $\DD\CF\in\Dbzctf(X)$.
\end{lemma}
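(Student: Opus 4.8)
The plan is to construct a natural comparison morphism $\lambda_\CF\colon(\DD\CF)/\ell\to\DD(\CF/\ell)$ (the first $\DD$ over $\ZZ/\ell^r$, the second over $\FFl$) and to prove it is an isomorphism by d\'evissage on $\dim X$, bottoming out in the base-change behaviour of Verdier duality for perfect complexes. To build $\lambda_\CF$: since $(-)/\ell=(-)\otimes^L_{\ZZ/\ell^r}\FFl$ is symmetric monoidal, applying it to the evaluation map $\RHom_{\ZZ/\ell^r}(\CF,\omega_X)\otimes\CF\to\omega_X$ (with $\omega_X=\pi_X^!\,\underline{\ZZ/\ell^r}$, $\pi_X\colon X\to\Spa(K)$) and using the $\otimes$--$\RHom$ adjunction over $\FFl$ gives $(\DD\CF)/\ell\to\RHom_{\FFl}(\CF/\ell,\omega_X/\ell)$; composing with the projection-formula map $\omega_X/\ell=\pi_X^!\underline{\ZZ/\ell^r}\otimes_{\ZZ/\ell^r}\underline{\FFl}\to\pi_X^!\underline{\FFl}=\omega_X^{\FFl}$ yields $\lambda_\CF$. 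By construction $\lambda$ is natural in $\CF$ and compatible with $f^*$, $f_!$ strictly and with $f_*$, $f^!$ laxly; combined with the standard intertwiners of $\DD$ with the six functors (the footnote to the proof of Proposition \ref{prop_artinvanishing}) and biduality, this will make $\lambda$ a morphism between the recollement triangles used below.

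Next I would observe that the statement is \'etale-local on $X$ (all of $\DD$, $(-)/\ell$, $\RHom$ and $\omega$ commute with \'etale pullback), so one may assume $X$ affinoid, hence of finite dimension $d$, and induct on $d$. If $d\leq0$ then $X$ is a finite disjoint union of copies of $\Spa(K)$, $\omega_X=\underline{\ZZ/\ell^r}$, and $\DD=\RHom_{\ZZ/\ell^r}(-,\ZZ/\ell^r)$; tor-finiteness of $\CF$ forces its stalk at each point to be a perfect complex $M$ of $\ZZ/\ell^r$-modules, and $\lambda_\CF$ becomes precisely the classical base-change isomorphism $M^\vee\otimes^L_{\ZZ/\ell^r}\FFl\isoto\RHom_{\FFl}(M\otimes^L_{\ZZ/\ell^r}\FFl,\FFl)$, which holds for perfect $M$.

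For the inductive step: using Zariski-constructibility of $\CF$ and density of the smooth locus of $X_{\mathrm{red}}$, choose a dense Zariski-open $j\colon U\hookrightarrow X$ lying in that smooth locus on which $\CF$ is lisse, with closed complement $i\colon Z\hookrightarrow X$, so $\dim Z<d$. Applying $\DD$ to the recollement triangle $i_*i^!\CF\to\CF\to j_*j^*\CF\to$ gives $j_!\DD_U(\CF|_U)\to\DD_X\CF\to i_*\DD_Z(i^!\CF)\to$; applying $(-)/\ell$, which commutes with $j_!$ and with $i_*=i_!$, and using $(\CF/\ell)|_U\simeq(\CF|_U)/\ell$ together with $i^!(\CF/\ell)\simeq(i^!\CF)/\ell$ (Lemma \ref{lem_j_coho_fini}), turns $\lambda_\CF$ into a morphism between this triangle and the corresponding one for $\CF/\ell$ over $\FFl$. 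On the $Z$-term this morphism is $i_*$ of $\lambda_{i^!\CF}$, an isomorphism by the inductive hypothesis since $i^!\CF$ is tor-finite on $Z$ (Lemma \ref{lem_j_coho_fini}) and $\dim Z<d$. On the $U$-term it is $j_!$ of $\lambda_{\CF|_U}$; on each connected component of the smooth $U$, of some pure dimension $d'$, $\omega_U=\underline{\ZZ/\ell^r}(d')[2d']$ by purity (\cite[\nopp 3.21]{bhatt_six_2022}), and, $\CF|_U$ being lisse with perfect stalks, $\DD_U(\CF|_U)$ is computed fibrewise from the stalk (a perfect $\ZZ/\ell^r$-complex with its monodromy), so $\lambda_{\CF|_U}$ is again the perfect-complex base-change isomorphism. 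The triangulated five-lemma then shows $\lambda_\CF$ is an isomorphism; the ``in particular'' follows since $\CF/\ell\in\Dbzc(X,\FFl)$ and $\DD$ preserves $\Dbzc$ over $\FFl$.

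The main obstacle I anticipate is the bookkeeping in the first step: checking carefully that the coefficient-change transformation $\lambda$ commutes with $j_!$, $i_*$, $j^*$, $i^!$ and with the $\DD$-intertwining isomorphisms, so that it genuinely defines a morphism of distinguished triangles to which the five-lemma applies. This is a routine but not entirely short diagram chase in the six-functor formalism with varying coefficients. A secondary, softer point is the fibrewise description of $\DD_U$ on lisse tor-finite sheaves, which is standard once lisse sheaves on a connected smooth $U$ are identified with continuous representations on perfect $\ZZ/\ell^r$-complexes.
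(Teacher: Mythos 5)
Your proposal is correct and follows essentially the same route as the paper: reduce to $X$ quasi-compact/affinoid, d\'evissage through a dense smooth Zariski open $U$ on which $\CF$ is lisse with closed complement $Z$ of smaller dimension, use Lemma~\ref{lem_j_coho_fini} to control the closed side, induct on dimension, and resolve the lisse case on smooth $U$ by reducing to the classical base-change isomorphism for perfect complexes of $\ZZ/\ell^r$-modules. Two minor points of divergence, neither essential. First, you dualize the recollement $i_*i^!\CF\to\CF\to j_*j^*\CF\to$ so that the two outer terms of the resulting triangle are $j_!(-)$ and $i_*(-)$, both of which commute with $(-)/\ell$ for free; the paper's d\'evissage instead goes through $j_*$ (whence the explicit appeal to $(j_*(-))/\ell\isoto j_*((-)/\ell)$ in the parenthetical). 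Both of you still use the $i^!$-commutation from Lemma~\ref{lem_j_coho_fini} to feed the inductive hypothesis. Second, in the lisse-on-smooth-$U$ case, you argue that $\DD_U$ is ``computed fibrewise'' on lisse tor-finite sheaves and conclude stalkwise; the paper instead passes to an \'etale covering trivialising $\CF_U$, writes $\CF_U\cong p^*M$ with $p:U\to\Spa(K)$ and $M\in D^b_{ctf}(\ZZ/\ell^r)$, and uses $\DD p^*=p^!\DD$ together with smoothness (to commute $p^!$ with $(-)/\ell$) to reduce literally to a point. The paper's version makes the ``fibrewise'' claim precise and is slightly more robust, but your argument is making the same reduction in a softer way. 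Your explicit construction of $\lambda_\CF$ via the evaluation pairing, $(-)/\ell$ symmetric-monoidality, $\otimes$--$\RHom$ adjunction, and the projection-formula map $\omega_X/\ell\to\omega_X^{\FFl}$ is the ``standard construction of the map'' that the paper declines to spell out, and is correct.
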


\begin{proof}
    We omit the standard construction of the map. We may assume $X$ is quasi-compact. Then there exist $j: U\subseteq X$ a smooth dense Zariski open on which $\CF$ is lisse. Denote its complement by $i: Z\hookrightarrow X$. $Z$ has strictly smaller dimension than $X$, so by induction on dimension and standard dévissage (note one needs to use $(j_*(-))/\ell\isoto j_*((-)/\ell)$ from Lemma \ref{lem_j_coho_fini}), it suffices to show $(\DD\CF_U)/\ell\rightarrow\DD(\CF_U/\ell)$ is an isomorphism. Note $\CF_U\in D^b_{lisse, tf}(U)$. By passing to some \etale covering, we may assume $\CF_U\cong p^*M$, where $p$ is $U\rightarrow \Spa(K)$ and $M$ is in $D^b_{ctf}(\ZZ/\ell^r)$ (\cite[the proof of 094G]{Stacks}). Then, $(\DD p^*M)/\ell\simeq (p^!\DD M)/\ell\simeq p^!((\DD M)/\ell)$ (note the smoothness of $U$ is used to write $p^!$ as a twist and shift of $p^*$ to commute with $(-)/\ell$), and $\DD((p^*M)/\ell)\simeq \DD p^*(M/\ell)\simeq p^!\DD(M/\ell)$. The question is now reduced to the case of a point, which is clear.
\end{proof}

\begin{corollary}\label{cor_ptexactphigeneralcoefficient}
    Let $f: X\rightarrow\AAA^1$ be a map of rigid analytic varieties. Assume $\Lambda=\ZZ/\ell^r$, $\ell\neq p$. Then $\Psi_f$ is perverse t-exact, in the following sense: 
    for every $\CF\in$ $^p\!D^{\leq 0}_{zctf}(X^\times)$ and $\CG\in$ $^p\!D^{\geq 0}_{zctf}(X^\times)$ which extend to objects of $\DX$, we have $\Psi_f(\CF)\in$ $^p\!D^{\leq 0}_{zctf}(X_0)$ and $\Psi_f(\CG)\in$ $^p\!D^{\geq 0}_{zctf}(X_0)$.
\end{corollary}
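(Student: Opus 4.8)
The plan is to bootstrap from the field-coefficient case, Theorem~\ref{thm_psi_t_exact_duality}.1 with $\Lambda=\FF_\ell$, by reducing modulo $\ell$. The two structural facts that make this go through are Lemma~\ref{lem_psi_mod_l} (so that $\Psi_f$ commutes with $(-)/\ell$ and sends tor-finite objects to tor-finite objects) and a ``detection after $(-)/\ell$'' statement for both ends of the perverse $t$-structure. Everything else is bookkeeping about extendability.

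First I would record the detection criterion in the form: for a rigid analytic variety $Y$ and $\CH\in\Dbzctf(Y,\ZZ/\ell^r)$, one has $\CH\in{}^p\!D^{\leq0}_{zctf}(Y)$ iff $\CH/\ell\in{}^p\!D^{\leq0}_{zc}(Y,\FF_\ell)$, and $\CH\in{}^p\!D^{\geq0}_{zctf}(Y)$ iff $\CH/\ell\in{}^p\!D^{\geq0}_{zc}(Y,\FF_\ell)$. The first equivalence is exactly Lemma~\ref{lem_perv_check_mod_l}. The second follows by applying the first to $\DD\CH$: it is tor-finite and satisfies $(\DD\CH)/\ell\simeq\DD(\CH/\ell)$ by Lemma~\ref{lem_D_mod_l}, and Verdier duality exchanges ${}^p\!D^{\leq0}$ with ${}^p\!D^{\geq0}$ both over $\ZZ/\ell^r$ and over $\FF_\ell$ (using reflexivity of Zariski-constructible, resp. tor-finite Zariski-constructible, sheaves).

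Next comes the extendability bookkeeping, which is needed because Theorem~\ref{thm_psi_t_exact_duality}.1 wants its input to extend to an object of $D^{(b)}_{zc}(X,\FF_\ell)$, and naively reducing an arbitrary $\ZZ/\ell^r$-extension of $\CF$ modulo $\ell$ need not be bounded below along $X_0$. Since $\psi_f(\CF)$ depends only on $\CF|_{X^\times}$, I would replace the given extension by $j_*\CF$, where $j\colon X^\times\hookrightarrow X$: writing $\tilde\CF\in\Dbzc(X,\ZZ/\ell^r)$ for any extension, $j_*\CF$ is Zariski-constructible because $j^*j_*\CF=\CF$ and $i^*j_*\CF\simeq\cone(i^!\tilde\CF\to i^*\tilde\CF)$ is Zariski-constructible by the six-functor formalism; it is bounded on quasi-compact opens and tor-finite because $j_*$ has finite cohomological dimension and commutes with $(-)/\ell$ (Lemma~\ref{lem_j_coho_fini}). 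So I may assume from the start that $\CF\in\Dbzctf(X,\ZZ/\ell^r)$, and then $\CF/\ell\in\Dbzc(X,\FF_\ell)$ is a genuine (bounded, Zariski-constructible) extension of $\CF/\ell|_{X^\times}$. With this in place the argument is two lines: if $\CF|_{X^\times}\in{}^p\!D^{\leq0}_{zctf}(X^\times)$, then $\CF/\ell|_{X^\times}\in{}^p\!D^{\leq0}_{zc}(X^\times,\FF_\ell)$ by the detection criterion, so $\Psi_f(\CF/\ell)\in{}^p\!D^{\leq0}_{zc}(X_0,\FF_\ell)$ by Theorem~\ref{thm_psi_t_exact_duality}.1; by Lemma~\ref{lem_psi_mod_l} the object $\Psi_f(\CF)$ is tor-finite and $\Psi_f(\CF)/\ell\simeq\Psi_f(\CF/\ell)$, so the detection criterion gives $\Psi_f(\CF)\in{}^p\!D^{\leq0}_{zctf}(X_0)$. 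The case $\CG|_{X^\times}\in{}^p\!D^{\geq0}_{zctf}(X^\times)$ is word for word the same, using the $\geq0$ half of the detection criterion and of Theorem~\ref{thm_psi_t_exact_duality}.1.

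I do not expect a genuine obstacle here, as every ingredient is already established; the only two points requiring a little care are precisely the ones isolated above: (a) producing a bounded Zariski-constructible $\FF_\ell$-coefficient extension of $\CF/\ell$ (which is why one passes through $j_*$ rather than reducing an arbitrary extension modulo $\ell$), and (b) obtaining the ${}^p\!D^{\geq0}$ detection criterion from the ${}^p\!D^{\leq0}$ one, which hinges on $\DD$ commuting with $(-)/\ell$, i.e.\ on Lemma~\ref{lem_D_mod_l}.
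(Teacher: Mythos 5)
Your proof is correct, and it takes a genuinely different route from the paper's. The paper reduces to right $t$-exactness by invoking Theorem~\ref{thm_psi_t_exact_duality}.2 (Verdier duality for $\Psi_f$, valid over $\ZZ/\ell^r$) together with Lemma~\ref{lem_D_mod_l}, and then passes mod $\ell$ via Lemmas~\ref{lem_perv_check_mod_l} and~\ref{lem_psi_mod_l}. You instead never touch Theorem~\ref{thm_psi_t_exact_duality}.2: you observe that combining Lemma~\ref{lem_perv_check_mod_l} with Lemma~\ref{lem_D_mod_l} and ordinary Verdier duality on $X_0$ already yields a mod-$\ell$ detection criterion for \emph{both} halves of the perverse $t$-structure, after which Theorem~\ref{thm_psi_t_exact_duality}.1 is applied symmetrically. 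Since Theorem~\ref{thm_psi_t_exact_duality}.2 is by far the hardest ingredient in that section (resolution of singularities, algebraisation, etc.), your route is logically lighter for this particular corollary. You also make explicit a point the paper glosses over: applying Theorem~\ref{thm_psi_t_exact_duality}.1 requires $\CF/\ell$ to have a bounded Zariski-constructible $\FF_\ell$-extension, which does not follow by just tensoring an arbitrary $\ZZ/\ell^r$-extension with $\ZZ/\ell$ (the extension need not be tor-finite, so the reduction could be unbounded along $X_0$); your replacement of the given extension by $j_*\CF$, verified to lie in $D^b_{zctf}(X)$ via Lemma~\ref{lem_j_coho_fini}, closes this small gap cleanly. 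In short: same lemma toolkit, but a shuffle that trades the heavy duality theorem for a short piece of bookkeeping.
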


\begin{proof}
     By duality (Theorem \ref{thm_psi_t_exact_duality}.2) and Lemma \ref{lem_D_mod_l}, it suffices to show right t-exactness, \textit{i.e.}, for every $\CF\in$ $^p\!D^{\leq 0}_{zctf}(X^\times)$ which extends to an object of $\DX$, to show $\Psi_f(\CF)\in$ $^p\!D^{\leq 0}_{zctf}(X_0)$. By Lemma \ref{lem_perv_check_mod_l} and Lemma \ref{lem_psi_mod_l}, this is equivalent to $\Psi_f(\CF/\ell)\in$ $^p\!D^{\leq 0}_{zctf}(X_0,\FF_\ell)$, which is true by Lemma \ref{lem_perv_check_mod_l}, and Theorem \ref{thm_psi_t_exact_duality}.1.
\end{proof}

Finally, we discuss vanishing cycles.
\begin{theorem}\label{thm_phi_t_exact_duality}
    Let $f: X\rightarrow\AAA^1$ be a map of rigid analytic varieties.\\
    (1) $\Phi_f: D^{(b)}_{zctf}(X)\rightarrow D^{(b)}_{zctf}(X_0)$ is perverse t-exact;\\
    (2) For $\CF\in$ $D^{(b)}_{zc}(X)$, there is a natural isomorphism $h: \Phi_f\mathbb{D}\CF\isoto(\mathbb{D}\Phi_f\CF)\miwa(1)$ in $D(X_0\bar\X B\mu)$. We have a commutative diagram Diagram \ref{eqn_verdict_duality}. The $\mathrm{can}$ and $\mathrm{var}$ triangles are dual to each other in this sense.
\end{theorem}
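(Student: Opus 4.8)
The plan is to deduce part~(1) from the perverse t-exactness of $\Psi_f$ (Corollary~\ref{cor_ptexactphigeneralcoefficient}) via the $\mathrm{can}$ and $\mathrm{var}$ triangles of Definition~\ref{def_sp_can_var}, and to obtain part~(2) by building the whole diagram~\ref{eqn_verdict_duality} at once, reading off $h$ as the induced map on third terms. \emph{For (1):} $\Phi_f$ preserves $D^{(b)}_{zctf}$ by Proposition~\ref{prop_phipreserveszc} and Lemma~\ref{lem_psi_mod_l}. For right t-exactness, given $\CF\in{}^p\!D^{\leq 0}_{zctf}(X)$, rotate the $[-1]$-shifted $\mathrm{can}$ triangle to $\Psi_f\CF\to\Phi_f\CF\to i^*\CF\xrightarrow{+1}$; since $j^*$ is perverse t-exact, $j^*\CF\in{}^p\!D^{\leq 0}_{zctf}(X^\times)$ extends to $\CF$, so $\Psi_f\CF\in{}^p\!D^{\leq 0}_{zctf}(X_0)$ by Corollary~\ref{cor_ptexactphigeneralcoefficient}, and $i^*\CF\in{}^p\!D^{\leq 0}$ since $i^*$ is perverse right t-exact for the closed immersion $i$, whence $\Phi_f\CF\in{}^p\!D^{\leq 0}$ by stability of ${}^p\!D^{\leq 0}$ under extensions. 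Dually, for $\CF\in{}^p\!D^{\geq 0}_{zctf}(X)$, rotate the $[-1]$-shifted $\mathrm{var}$ triangle to $i^!\CF\to\Phi_f\CF\to\Psi_f\CF\miwa\xrightarrow{+1}$: $i^!\CF\in{}^p\!D^{\geq 0}$ ($i^!$ perverse left t-exact) and $\Psi_f\CF\miwa\in{}^p\!D^{\geq 0}$ since $\Psi_f\CF\in{}^p\!D^{\geq 0}$ and the Iwasawa twist is exact and leaves the underlying complex unchanged (Remark~\ref{rmk_t_non_t_decomposition}), so $\Phi_f\CF\in{}^p\!D^{\geq 0}$. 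For $\Lambda=\ZZ/\ell^r$ one may instead reduce to $\Lambda=\FF_\ell$ via Lemmas~\ref{lem_perv_check_mod_l} and~\ref{lem_psi_mod_l}.

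\emph{For (2):} The top row of diagram~\ref{eqn_verdict_duality} is the $[-1]$-shifted $\mathrm{can}$ triangle of $\DD\CF$, namely $(i^*\DD\CF)[-1]\xrightarrow{\mathrm{sp}}\Psi_f\DD\CF\xrightarrow{\mathrm{can}}\Phi_f\DD\CF\xrightarrow{+1}$. For the bottom row, apply the exact functor $\piwa(-1)$ to the $[-1]$-shifted $\mathrm{var}$ triangle $\Phi_f\CF\xrightarrow{\mathrm{var}}\Psi_f\CF\miwa\xrightarrow{\mathrm{cosp}}i^!\CF[1]\xrightarrow{+1}$ and then apply $\DD$, simplifying the first two terms using $\DD i^!\CF\simeq i^*\DD\CF$ (the formal duality identities in the footnote to Proposition~\ref{prop_artinvanishing}), the invertibility $\Psi_f\CF\miwa\piwa\simeq\Psi_f\CF$, the triviality of the $\mu$-action on $i^!\CF$ (so $i^!\CF\piwa\simeq i^!\CF(1)$ by Lemma~\ref{lem_iwasawa_basics}.2), and $\DD(M\piwa)\simeq(\DD M)\miwa$ (from \cite[\S 2.1]{lu_duality_2019}); this exhibits the bottom row as $(i^*\DD\CF)[-1]\to(\DD\Psi_f\CF)(1)\to(\DD\Phi_f\CF)\miwa(1)\xrightarrow{+1}$. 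The left vertical map $\alpha$ is the canonical isomorphism $i^*\DD\simeq\DD i^!$ (shifted) and the middle one is $g$ of Theorem~\ref{thm_psi_t_exact_duality}.2. Granting commutativity of the left square, completing it to a morphism of triangles yields the right vertical map $h:\Phi_f\DD\CF\to(\DD\Phi_f\CF)\miwa(1)$, the right square commuting automatically; $h$ is an isomorphism because $\alpha$ and $g$ are. Performing everything inside $D(X_0\bar\X B\mu)$, where $\DD$, $\piwa$, Tate twist, shifts and cones all live, makes $h$ a morphism of $\mu$-equivariant objects, and naturality in $\CF$ follows from that of $g$, $\alpha$, $\mathrm{sp}$, $\mathrm{cosp}$ and of cones. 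The asserted duality of the $\mathrm{can}$ and $\mathrm{var}$ triangles is precisely the commutativity of diagram~\ref{eqn_verdict_duality} with $\alpha,g,h$ isomorphisms. For $\Lambda=\ZZ/\ell^r$ (and tor-finite $\CF$) one reduces to $\Lambda=\FF_\ell$ using Lemmas~\ref{lem_D_mod_l}, \ref{lem_psi_mod_l} and~\ref{lem_j_coho_fini}, which make $\DD$, $\Psi_f$, $\Phi_f$ and $i^!$ commute with $(-)/\ell$.

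\emph{Main obstacle.} The one non-formal point is commutativity of the left square, equivalently $g\circ\mathrm{sp}_{\DD\CF}=\DD(\mathrm{cosp}_\CF\piwa(-1))\circ\alpha$ — that cospecialization is Verdier dual to specialization. The plan is to retrace the proof of Theorem~\ref{thm_psi_t_exact_duality}.2: unwind the definition of $g$ (built from the lax symmetric monoidal structure on $\psi_f$, the pairing $\DD\CF\OX\CF\to\omega_X$, the map $m:\Psi_f f^!\omega_{\AAA^1}\to f_0^!\Psi_{\mathrm{id}}\omega_{\AAA^1}$, and $\Psi_{\mathrm{id}}\omega_{\AAA^1}\simeq\omega_{\{0\}}(1)[1]$ induced by $\mathrm{sp}$), observe that both sides of the identity are natural in $\CF$ and commute with proper pushforward (as in \cite[4.3.7]{illusie_expose_1994}) and with analytification (via \cite[3.16]{bhatt_six_2022}), and then reduce — by dévissage, resolution of singularities (\cite[1.1.13.(i), 1.1.11]{temkin_functorial_2018}), Abhyankar's lemma (\cite[Proposition~2.1]{li_logarithmic_2019}) and passage to a polydisc — to the strictly monomial / algebraic situation, where the compatibility is classical (\cite[4.3]{illusie_expose_1994}, \cite[3.1]{lu_duality_2019}). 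This is the reduction already used to build $g$ itself, so the additional labour is bookkeeping, not a new idea.
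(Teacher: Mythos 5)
Part (1) of your proposal is correct and follows the paper's route exactly: rotate the $\mathrm{can}$ and $\mathrm{var}$ triangles, invoke Corollary~\ref{cor_ptexactphigeneralcoefficient} for $\Psi_f$ and Lemma~\ref{lem_j_coho_fini} for $i^*$, $i^!$, and conclude by stability of $^p\!D^{\leq 0}$, $^p\!D^{\geq 0}$ under extensions.

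Part (2) has a genuine gap, and it is precisely the one you flag as the ``main obstacle.'' You propose to take $\alpha$ to be \emph{the} canonical isomorphism $i^*\DD\CF\simeq\DD i^!\CF$ and then \emph{prove} that the left square of Diagram~\ref{eqn_verdict_duality} commutes with this choice, by naturality, compatibility with proper pushforward and analytification, and reduction to a strictly monomial / algebraic situation where the statement is ``classical.'' The paper explicitly declines this route. In the remark following Theorem~\ref{thm_phi_t_exact_duality}, the author writes: ``We do not know if $\alpha$ always coincides with the usual isomorphism. We also do not know if the usual isomorphism gives a commutative square in Diagram~\ref{eqn_phi_square_duality}.'' In other words, the compatibility you propose to verify is an acknowledged open question in the paper, not a side fact one can offload to a reference. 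The paper sidesteps it by \emph{constructing} a map $\alpha$ (not asserted to be the standard one) through the three-step pairing construction (the $\Upsilon$-diagram, the projection-formula square $\circled{B}$, the map $(iii)$ to $f_0^!\Upsilon\omega_{\AAA^1}$, and the homotopy Diagram~\ref{eqn_two_pairings}), so that the left square commutes by construction; it then proves in Step~3 that this constructed $\alpha$ \emph{is} an isomorphism by checking the two recollement cases $\CF\simeq j_*j^*\CF$ and $\CF\simeq i_*i^*\CF$. Your ``reduce to the algebraic case, where it is classical'' does not clearly close the gap either: the paper's Remark~(1) states the proof it gives is also the proof in the algebraic setting (with the modification $\Psi\rightsquigarrow\Psi^P$ in positive characteristic), so there is no independent classical reference being reduced to. Finally, note a technical point: to take cones across Diagram~\ref{eqn_phi_square_duality} and produce $h$ coherently, one needs an actual homotopy witnessing commutativity (a $2$-cell in the $\infty$-category), not merely equality of the two compositions in the homotopy category; the stalk-checking / reduction method at best gives the latter, whereas the paper's approach yields the homotopy itself as a by-product of the construction.
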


Here $\miwa$ denotes the Iwasawa twist (Definition \ref{def_iwasawa}, Remark \ref{rmk_t_non_t_decomposition}), and $\mathbb{D}\Phi_f\CF$ means $\RHom_{X_0}(\Phi_f\CF,\omega_{X_0})$ equipped with the natural $\mu$-action, viewed as an object of $D(X_0\bar\X B\mu)$.

\begin{proof}
    (1) The perverse t-exactness of $\Phi_f$ follows from the result for $\Psi_f$, the \textit{can} and \textit{var} triangles, and the fact that $i^*$ and $i^!$ preserve $\Dbzctf$ (Lemma \ref{lem_j_coho_fini}). Namely, let $\CF\in$ $^p\!D^{\leq 0}_{zctf}(X)$ (resp. $^p\!D^{\geq 0}_{zctf}(X)$). We know $i^*\CF\in$ $^p\!D^{\leq 0}_{zctf}(X_0)$ (resp. $i^!\CF\in$ $^p\!D^{\geq 0}_{zctf}(X_0)$). The perverse cohomology long exact sequence associated to $\Psi_f(\CF)\xrightarrow{\mathrm{can}}\Phi_f(\CF)\rightarrow i^*\CF\rightarrow$ (resp. $i^!\CF\rightarrow\Phi_f(\CF)\xrightarrow{\mathrm{var}}\Psi_f(\CF)\miwa\rightarrow$) then shows $\Phi_f(\CF)\in$ $^p\!D^{\leq 0}_{zctf}(X_0)$ (resp. $^p\!D^{\geq 0}_{zctf}(X_0)$).\\\\
    (2) We will construct a natural transformation $h: \Phi_f\mathbb{D}\rightarrow(\mathbb{D}\Phi_f)\miwa(1)$ as functors $\Dbzc(X)\rightarrow \Dbzc(X_0\bar\X B\mu)$,\footnote{Here $\Dbzc(X_0\bar\X B\mu)$ is the full subcategory of objects whose underlying sheaf is in $\Dbzc(X_0)$.} and show it is an isomorphism. Our method is inspired by \cite[\nopp 5.2.3]{saito_modules_1988}. \\\\
    Recall the \textit{can} and \textit{var} triangles in Corollary \ref{cor_u_nu_decomp}. Apply $\piwa(-1)$ to the \textit{var} triangle, using Lemma \ref{lem_iwasawa_basics}.2, we get $\Phi\piwa(-1)\xrightarrow{}\Psi(-1)\xrightarrow{\mathrm{cosp}\piwa(-1)} i^![1]\rightarrow$.\footnote{Here and in the following, we will call a natural transformation (\textit{e.g.} cosp) and its shifts by the same names (but will record the Tate and Iwasawa twists). We also omit the subscript $f$.} Apply $\DD$, we get $(\DD i^!)[-1]\xrightarrow{}(\DD\Psi)(1)\xrightarrow{}(\DD\Phi)\miwa(1)\rightarrow$. It suffices to construct the following commutative diagram of functors from $\DX$ to $\Dbzc(X_0\bar\X B\mu)$, the cone of which will give us $h: \Phi\DD\rightarrow(\DD\Phi)\miwa(1)$. Here $g$ is the isomorphism in Theorem \ref{thm_psi_t_exact_duality}.2.
\begin{equation}\label{eqn_phi_square_duality}
    % https://q.uiver.app/#q=WzAsNCxbMiwwLCJcXFBzaVxcbWF0aGJie0R9Il0sWzIsMSwiKFxcbWF0aGJie0R9XFxQc2kpKDEpIl0sWzAsMCwiKGleKlxcbWF0aGJie0R9KVstMV0iXSxbMCwxLCIoXFxtYXRoYmJ7RH1pXiEpWy0xXSJdLFswLDEsImciXSxbMCwxLCJcXHNpbWVxIiwyXSxbMiwzLCJcXGFscGhhIiwyLHsic3R5bGUiOnsiYm9keSI6eyJuYW1lIjoiZGFzaGVkIn19fV0sWzMsMSwiXFxtYXRoYmJ7RH0oY29zcCgxKV5cXHRhdSgtMSkpIl0sWzIsMCwic3AiXV0=
\begin{tikzcd}
	{(i^*\mathbb{D})[-1]} && {\Psi\mathbb{D}} \\
	{(\mathbb{D}i^!)[-1]} && {(\mathbb{D}\Psi)(1)}
	\arrow["\mathrm{sp}", from=1-1, to=1-3]
	\arrow["\alpha"', dashed, from=1-1, to=2-1]
	\arrow["g", from=1-3, to=2-3]
	\arrow["\simeq"', from=1-3, to=2-3]
	\arrow["{\mathbb{D}(\mathrm{cosp}(1)^\tau(-1))}"', from=2-1, to=2-3]
\end{tikzcd}
\end{equation}
Denote $\DD\CF$ and $\CF$ by $K$ and $L$, respectively. Equivalently, we need to construct a natural pairing
$\alpha': i^*K\otimes i^!L\rightarrow\omega_{X_0}$ and a homotopy between the following two pairings on $i^*K\otimes \Psi L$:\footnote{Of course, all tensor products in this proof are over $\Lambda$ instead of $R(=\Lambda[[\mu]])$.}
\begin{equation}\label{eqn_two_pairings}
% https://q.uiver.app/#q=WzAsNyxbMSwwLCJcXFBzaSBLXFxvdGltZXMgXFxQc2kgTFsxXSJdLFsyLDAsIlxcUHNpIChLXFxvdGltZXMgTCkiXSxbMCwwLCJpXipLXFxvdGltZXMgXFxQc2kgTCJdLFsyLDEsImleKktcXG90aW1lcyBpXiFMKDEpWzFdIl0sWzUsMCwiZl4hXzBcXFBzaVxcb21lZ2Ffe1xcbWF0aGJme0F9XjF9Il0sWzQsMCwiXFxQc2kgXFxvbWVnYV9YIl0sWzYsMCwiXFxvbWVnYV97WF8wfSgxKVsxXSJdLFswLDEsImxzeW0iXSxbMiwwLCJzcFxcb3BsdXMgaWQiXSxbMiwzLCJpZFxcb3RpbWVzIChjb3NwKDEpXlxcdGF1KSIsMix7ImN1cnZlIjoyfV0sWzIsMSwiXFxnYW1tYSIsMSx7ImN1cnZlIjoyLCJzdHlsZSI6eyJib2R5Ijp7Im5hbWUiOiJkb3R0ZWQifX19XSxbMSw1LCJwYWlyaW5nIl0sWzUsNCwibSJdLFszLDYsIlxcYWxwaGEnKDEpIiwyLHsiY3VydmUiOjJ9XSxbNCw2LCJcXHNpbWVxIl0sWzEsNCwibSciLDEseyJjdXJ2ZSI6Miwic3R5bGUiOnsiYm9keSI6eyJuYW1lIjoiZG90dGVkIn19fV1d
\begin{tikzcd}
	{i^*K\otimes \Psi L} & {\Psi K\otimes \Psi L[1]} & {\Psi (K\otimes L)} && {\Psi \omega_X} & {f^!_0\Psi\omega_{\mathbf{A}^1}} & {\omega_{X_0}(1)[1]} \\
	&& {i^*K\otimes i^!L(1)[1]}
	\arrow["{sp\oplus id}", from=1-1, to=1-2]
	\arrow["\gamma"{description}, curve={height=12pt}, dotted, from=1-1, to=1-3]
	\arrow["{\mathrm{id}\otimes (\mathrm{cosp}(1)^\tau)}"', curve={height=12pt}, from=1-1, to=2-3]
	\arrow["\mathrm{lsym}", from=1-2, to=1-3]
	\arrow["pairing", from=1-3, to=1-5]
	\arrow["m", from=1-5, to=1-6]
	\arrow["\simeq", from=1-6, to=1-7]
	\arrow["{\alpha'}"', curve={height=12pt}, from=2-3, to=1-7]
\end{tikzcd}
\end{equation}
Here, in the first line, $f_0^!$, $\mathrm{lsym}$, $\mathrm{pairing}$, $m$ and “$\simeq$” are as in the definition of $g$ (Theorem \ref{thm_psi_t_exact_duality}.2), and we have labelled the composition $\mathrm{lsym}\circ(\mathrm{sp}\oplus \mathrm{id})$ by $\gamma$ for later reference.\\\\
Recall the diagram in Definition \ref{def_sp_can_var}. Denote the following commutative square by $\Upsilon(-)$. 
% https://q.uiver.app/#q=WzAsNCxbMSwwLCJcXHBzaSJdLFsxLDEsIlxccHNpKC0xKV5cXHRhdSJdLFswLDAsImleKiJdLFswLDEsIjAiXSxbMCwxLCJcXGlvdGEiXSxbMiwwLCJzcCJdLFsyLDNdLFszLDFdXQ==
\[\begin{tikzcd}
	{i^*} & \psi \\
	0 & {\psi(-1)^\tau}
	\arrow["\mathrm{sp}", from=1-1, to=1-2]
	\arrow[from=1-1, to=2-1]
	\arrow["\iota", from=1-2, to=2-2]
	\arrow[from=2-1, to=2-2]
\end{tikzcd}\]
\underline{Step 1}. We construct natural maps $i^*K\otimes \Upsilon L\xrightarrow{(i)} \Upsilon(K\otimes L)\xrightarrow{(ii)} \Upsilon\omega_X\xrightarrow{(iii)} f_0^!\Upsilon\omega_{\AAA^1}$.\\\\
(i) is constructed by the following diagram:
\begin{equation}\label{eqn_THE_diagram}
% https://q.uiver.app/#q=WzAsOCxbMSwwLCJpXipLXFxvdGltZXNcXHBzaSBMIl0sWzIsMCwiaV4qS1xcb3RpbWVzKFxccHNpIEwoLTEpXlxcdGF1KSJdLFsxLDEsImleKktcXG90aW1lc1xccHNpIEwiXSxbMCwwLCJpXipLXFxvdGltZXMgaV4qTCJdLFsxLDIsIlxccHNpIChLXFxvdGltZXMgTCkiXSxbMCwyLCJpXiooS1xcb3RpbWVzIEwpIl0sWzIsMiwiXFxwc2kgKEtcXG90aW1lcyBMKSgtMSleXFx0YXUiXSxbMiwxLCIoaV4qS1xcb3RpbWVzXFxwc2kgTCkoLTEpXlxcdGF1Il0sWzAsMSwiaWRcXG90aW1lcyBcXGlvdGEiXSxbMywwLCJpZFxcb3RpbWVzIHNwIl0sWzAsMiwiPSIsMl0sWzUsNCwic3AiXSxbNCw2LCJcXGlvdGEiXSxbMSw3LCJcXGJldGEiXSxbNyw2LCJcXGdhbW1hIl0sWzIsNywiXFxpb3RhIl0sWzIsNCwiXFxnYW1tYSIsMl0sWzMsNSwiXFxzaW1lcSIsMl0sWzMsNCwiXFx0ZXh0Y2lyY2xlZHtBfSIsMSx7InN0eWxlIjp7ImJvZHkiOnsibmFtZSI6Im5vbmUifSwiaGVhZCI6eyJuYW1lIjoibm9uZSJ9fX1dLFswLDcsIlxcdGV4dGNpcmNsZWR7Qn0iLDEseyJzdHlsZSI6eyJib2R5Ijp7Im5hbWUiOiJub25lIn0sImhlYWQiOnsibmFtZSI6Im5vbmUifX19XSxbMiw2LCJcXHRleHRjaXJjbGVke0N9IiwxLHsic3R5bGUiOnsiYm9keSI6eyJuYW1lIjoibm9uZSJ9LCJoZWFkIjp7Im5hbWUiOiJub25lIn19fV1d
\begin{tikzcd}
	{i^*K\otimes i^*L} & {i^*K\otimes\psi L} & {i^*K\otimes(\psi L(-1)^\tau)} \\
	& {i^*K\otimes\psi L} & {(i^*K\otimes\psi L)(-1)^\tau} \\
	{i^*(K\otimes L)} & {\psi (K\otimes L)} & {\psi (K\otimes L)(-1)^\tau}
	\arrow["{\mathrm{id}\otimes \mathrm{sp}}", from=1-1, to=1-2]
	\arrow["\simeq"', from=1-1, to=3-1]
	\arrow["{\circled{A}}"{description}, draw=none, from=1-1, to=3-2]
	\arrow["{\mathrm{id}\otimes \iota}", from=1-2, to=1-3]
	\arrow["{=}"', from=1-2, to=2-2]
	\arrow["{\circled{B}}"{description}, draw=none, from=1-2, to=2-3]
	\arrow["\beta", , "\simeq"', from=1-3, to=2-3]
	\arrow["\iota", from=2-2, to=2-3]
	\arrow["\gamma"', from=2-2, to=3-2]
	\arrow["{\circled{C}}"{description}, draw=none, from=2-2, to=3-3]
	\arrow["\gamma\miwa", from=2-3, to=3-3]
	\arrow["\mathrm{sp}", from=3-1, to=3-2]
	\arrow["\iota", from=3-2, to=3-3]
\end{tikzcd}
\end{equation}
Recall $\gamma=\mathrm{lsym}\circ (\mathrm{sp}\otimes \mathrm{id})$, $\mathrm{lsym}$ is ($\varinjlim$ of) $i^*$ applied to the natural transformation $(j_{n*}j_n^*)\otimes (j_{n_*}j_n^*)\rightarrow j_{n*}j_n^*(-\otimes-)$ and $\mathrm{sp}$ is ($\varinjlim$ of) $i^*$ applied to the adjunction $\mathrm{id}\rightarrow j_{n*}j_n^*$ (notations as in Definition \ref{def_van}). So a homotopy in \circled{A} is induced by the monoidal natural transformation structure of $\mathrm{id}\rightarrow j_{n*}j_n^*$ (which is encoded in the six-functor formalism). A homotopy in \circled{C} is provided by the functoriality of $\iota: \mathrm{id}\rightarrow \miwa$.\\\\ 
The commutative square $\circled{B}$ is constructed as the cone of the left square in the following commutative diagram of distinguished triangles:
% https://q.uiver.app/#q=WzAsOCxbMSwwLCJpXipLXFxvdGltZXNcXHBzaSBMIl0sWzIsMCwiaV4qS1xcb3RpbWVzKFxccHNpIEwoLTEpXlxcdGF1KSJdLFsxLDEsImleKktcXG90aW1lc1xccHNpIEwiXSxbMiwxLCIoaV4qS1xcb3RpbWVzXFxwc2kgTCkoLTEpXlxcdGF1Il0sWzAsMSwicF4qcF8qKGleKktcXG90aW1lcyBcXHBzaSBMKSJdLFswLDAsIihpXipLXFxvdGltZXMgcF4qcF8qXFxwc2kgTCkiXSxbMywwXSxbMywxXSxbMCwxLCJpZFxcb3RpbWVzIFxcaW90YSJdLFswLDIsIj0iLDJdLFsxLDMsIlxcYmV0YSJdLFsyLDMsIlxcaW90YSJdLFs0LDIsImFkaiJdLFs1LDAsImlkXFxvdGltZXMgYWRqIl0sWzUsNCwiXFxzaW1lcSIsMl0sWzAsMywiXFx0ZXh0Y2lyY2xlZHtCfSIsMSx7InN0eWxlIjp7ImJvZHkiOnsibmFtZSI6Im5vbmUifSwiaGVhZCI6eyJuYW1lIjoibm9uZSJ9fX1dLFsxLDZdLFszLDddXQ==
\[\begin{tikzcd}
	{(i^*K\otimes p^*p_*\psi L)} & {i^*K\otimes\psi L} & {i^*K\otimes(\psi L(-1)^\tau)} & {} \\
	{p^*p_*(i^*K\otimes \psi L)} & {i^*K\otimes\psi L} & {(i^*K\otimes\psi L)(-1)^\tau} & {}
	\arrow["{\mathrm{id}\otimes \mathrm{adj}}", from=1-1, to=1-2]
	\arrow["\simeq"', "p^*(\mathrm{proj})", from=1-1, to=2-1]
	\arrow["{\mathrm{id}\otimes \iota}", from=1-2, to=1-3]
	\arrow["{=}"', from=1-2, to=2-2]
	\arrow["{\circled{B}}"{description}, draw=none, from=1-2, to=2-3]
	\arrow[from=1-3, to=1-4]
	\arrow["\beta", "\simeq"', from=1-3, to=2-3]
	\arrow["\mathrm{adj}", from=2-1, to=2-2]
	\arrow["\iota", from=2-2, to=2-3]
	\arrow[from=2-3, to=2-4]
\end{tikzcd}\]
Here, $p$ is the projection $X_0\bar\X B\mu\rightarrow X_0$, “proj” denotes the projection formula map (we have omitted the $p^*$ in $p^*i^*$ to simplify notations), which is an isomorphism by \cite[Corollary 1.20.(a)]{lu_duality_2019}, and the homotopy of the left square is induced by the equality of maps after taking adjoints:\footnote{On the abelian level of $R$-modules, $\beta$ can be described as follows: let $M$ (resp. $I$) be a discrete $\Lambda$-module with a continuous (resp. trivial) $\mu$-action. Identify $I\otimes(M\miwa)$ with $I\otimes Z^1_{cont}(\mu,M)$, and $(I\otimes M)\miwa$ with $Z^1_{cont}(\mu,I\otimes M)$ (Lemma \ref{lem_iwasawa_basics}.1) (the tensor products are underived and over $\Lambda$). Then $\beta$ is $I\otimes(M\miwa)\rightarrow (I\otimes M)\miwa$, $x\otimes (h\mapsto f(h))\mapsto (h\mapsto x\otimes f(h))$. Using the fact that $I$ has the trivial $\mu$-action, one checks that this is a well-defined map of $R$-modules. That it is an isomorphism can be seen by fixing any topological generator $t$ of $\mu$ and using the induced isomorphisms (see the paragraph after Definition \ref{def_iwasawa}) from $I\otimes(M(-1)^\tau)$ and $(I\otimes M)(-1)^\tau$ to $I\otimes M$.
% https://q.uiver.app/#q=WzAsNCxbMCwwLCJJXFxvdGltZXMoTSgtMSleXFx0YXUpIl0sWzAsMSwiKElcXG90aW1lcyBNKSgtMSleXFx0YXUiXSxbMSwwLCJJXFxvdGltZXMgTSJdLFsxLDEsIklcXG90aW1lcyBNIl0sWzAsMSwiXFxiZXRhIiwyXSxbMCwyLCJcXHNpbSJdLFsxLDMsIlxcc2ltIl0sWzIsMywiaWQiXV0=
}
% https://q.uiver.app/#q=WzAsNCxbMSwwLCJwXyooaV4qS1xcb3RpbWVzXFxwc2kgTCkiXSxbMSwxLCJwXyooaV4qS1xcb3RpbWVzXFxwc2kgTCkiXSxbMCwxLCJwXyooaV4qS1xcb3RpbWVzIFxccHNpIEwpIl0sWzAsMCwiaV4qS1xcb3RpbWVzIHBfKlxccHNpIEwiXSxbMCwxLCI9IiwyXSxbMiwxLCI9Il0sWzMsMCwicHJvaiJdLFszLDIsInByb2oiLDJdXQ==
\[\begin{tikzcd}
	{i^*K\otimes p_*\psi L} & {p_*(i^*K\otimes\psi L)} \\
	{p_*(i^*K\otimes \psi L)} & {p_*(i^*K\otimes\psi L)}
	\arrow["\mathrm{proj}", from=1-1, to=1-2]
	\arrow["\mathrm{proj}"', from=1-1, to=2-1]
	\arrow["{=}"', from=1-2, to=2-2]
	\arrow["{=}", from=2-1, to=2-2]
\end{tikzcd}\]
This completes the construction of $(i): i^*K\otimes \Upsilon L\rightarrow \Upsilon(K\otimes L)$ (the remaining $2$-morphism is obtained by composition and the $3$-morphism by “filling the cylinder” using the axiom for $\infty$-categories).\\\\
(ii) is given by the functoriality of $\Upsilon(-)$ and the pairing $K\otimes L\rightarrow\omega_X$.\\\\
(iii) is constructed by the following diagram:
% https://q.uiver.app/#q=WzAsNixbMCwwLCJpXipmXiFcXG9tZWdhX3tcXG1hdGhiZntBfV4xfSJdLFsxLDAsIlxccHNpIGZeIVxcb21lZ2Ffe1xcbWF0aGJme0F9XjF9Il0sWzAsMSwiZl8wXiFpXzBeKlxcb21lZ2Ffe1xcbWF0aGJme0F9XjF9Il0sWzEsMSwiZl8wXiFcXHBzaV8wXFxvbWVnYV97XFxtYXRoYmZ7QX1eMX0iXSxbMywwLCJcXHBzaV8wXFxvbWVnYV97XFxtYXRoYmZ7QX1eMX0iXSxbMywxLCJcXHBzaV8wXFxvbWVnYV97XFxtYXRoYmZ7QX1eMX0iXSxbMCwxLCJzcCJdLFsyLDMsImZfMF4hKHNwKSJdLFswLDJdLFsxLDMsIm0iXSxbNCw1LCJtKC0xKV5cXHRhdSJdLFsxLDQsIlxcaW90YSJdLFszLDUsImZfMF4hKFxcaW90YSlcXHNpbWVxXFxpb3RhIl1d
\[\begin{tikzcd}
	{i^*f^!\omega_{\mathbf{A}^1}} & {\psi f^!\omega_{\mathbf{A}^1}} && {\psi_0\omega_{\mathbf{A}^1}} \\
	{f_0^!i_0^*\omega_{\mathbf{A}^1}} & {f_0^!\psi_0\omega_{\mathbf{A}^1}} && {\psi_0\omega_{\mathbf{A}^1}}
	\arrow["\mathrm{sp}", from=1-1, to=1-2]
	\arrow[from=1-1, to=2-1]
	\arrow["\iota", from=1-2, to=1-4]
	\arrow["m", from=1-2, to=2-2]
	\arrow["{m(-1)^\tau}", from=1-4, to=2-4]
	\arrow["{f_0^!(\mathrm{sp})}", from=2-1, to=2-2]
	\arrow["{f_0^!(\iota)\simeq\iota}", from=2-2, to=2-4]
\end{tikzcd}\]
Here, $i_0$ denotes the inclusion $\{0\}\hookrightarrow \AAA^1$ and $\psi_0$ denotes the nearby cycle with respect to $\mathrm{id}: \AAA^1\rightarrow\AAA^1$. The homotopy in the right square is given by the functoriality of $\iota$, and that in the left one is induced by the following digram (the first line is as in the construction of $m$ in Theorem \ref{thm_psi_t_exact_duality}.2), where the homotopies are the evident ones:
% https://q.uiver.app/#q=WzAsNCxbMCwwLCJpXipqX3tuKn1qX25eKmZeIVxcc2ltZXEgaV4qZl8wXiFlX3tuKn1lX3tufV4qIl0sWzEsMCwiZl8wXiFpXzBeKmVfezBuKn1lX3swbn1eKiJdLFswLDEsImleKmZeIT1pXipmXiEiXSxbMSwxLCJmXzBeIWlfMF4qIl0sWzAsMV0sWzIsMCwiaV4qZl8wXiEoYWRqKSIsMix7Im9mZnNldCI6NX1dLFsyLDNdLFszLDEsImZfMF4haV8wXiooYWRqKSIsMl0sWzIsMCwiaV4qKGFkailmXiEiLDAseyJvZmZzZXQiOi01fV1d
\[\begin{tikzcd}
	{i^*j_{n*}j_n^*f^!\simeq i^*f_0^!e_{n*}e_{n}^*} & {f_0^!i_0^*e_{0n*}e_{0n}^*} \\
	{i^*f^!=i^*f^!} & {f_0^!i_0^*}
	\arrow[from=1-1, to=1-2]
	\arrow["{i^*f_0^!(\mathrm{adj})}"', shift right=7, from=2-1, to=1-1]
	\arrow["{i^*(\mathrm{adj})f^!}", shift left=7, from=2-1, to=1-1]
	\arrow[from=2-1, to=2-2]
	\arrow["{f_0^!i_0^*(\mathrm{adj})}"', from=2-2, to=1-2]
\end{tikzcd}\]
This completes the construction of $i^*K\otimes \Upsilon L\xrightarrow{(i)} \Upsilon(K\otimes L)\xrightarrow{(ii)} \Upsilon\omega_X\xrightarrow{(iii)} f_0^!\Upsilon\omega_{\AAA^1}$.\\\\
\underline{Step 2}. By taking cones, $(i)$, $(ii)$ and $(iii)$ induce maps among the full diagrams (Definition \ref{def_sp_can_var}). In particular, $(ii)\circ(i)$ induces a map from:\\
\begin{equation}\label{eqn_big_diagram_K_L}
% https://q.uiver.app/#q=WzAsMTIsWzEsMiwiaV4qS1xcb3RpbWVzXFxQc2kgTCJdLFsyLDIsImleKktcXG90aW1lc1xcUGhpIEwiXSxbMSwzLCJpXipLXFxvdGltZXMoXFxQc2kgTCgtMSleXFx0YXUpIl0sWzIsMywiaV4qS1xcb3RpbWVzKFxcUHNpIEwoLTEpXlxcdGF1KSJdLFsyLDEsIiBpXipLXFxvdGltZXMgaV4hTCJdLFsyLDAsImleKktcXG90aW1lcyhcXFBzaSBMKC0xKV5cXHRhdSlbLTFdIl0sWzEsMCwiaV4qS1xcb3RpbWVzKFxcUHNpIEwoLTEpXlxcdGF1KVstMV0iXSxbMSwxLCJpXipLXFxvdGltZXMgaV4qal8qal4qTFstMV0iXSxbMCwyLCJpXipLXFxvdGltZXMgaV4qTFstMV0iXSxbMCwzLCIwIl0sWzAsMSwiaV4qS1xcb3RpbWVzIGleKkxbLTFdIl0sWzAsMCwiMCJdLFswLDEsImlkXFxvdGltZXMgY2FuIl0sWzAsMiwiaWRcXG90aW1lc1xcaW90YSJdLFsyLDMsIj0iXSxbMSwzLCJpZFxcb3RpbWVzIHZhciJdLFs0LDFdLFs1LDQsImlkXFxvdGltZXMgY29zcCJdLFs2LDUsIj0iXSxbNiw3XSxbNywwXSxbOCwwLCJpZFxcb3RpbWVzIHNwIl0sWzgsOV0sWzksMl0sWzEwLDgsIj0iXSxbMTAsN10sWzcsNF0sWzExLDEwXSxbMTEsNl1d
\begin{tikzcd}
	0 & {i^*K\otimes(\Psi L(-1)^\tau)[-1]} & {i^*K\otimes(\Psi L(-1)^\tau)[-1]} \\
	{i^*K\otimes i^*L[-1]} & {i^*K\otimes i^*j_*j^*L[-1]} & { i^*K\otimes i^!L} \\
	{i^*K\otimes i^*L[-1]} & {i^*K\otimes\Psi L} & {i^*K\otimes\Phi L} \\
	0 & {i^*K\otimes(\Psi L(-1)^\tau)} & {i^*K\otimes(\Psi L(-1)^\tau)}
	\arrow[from=1-1, to=1-2]
	\arrow[from=1-1, to=2-1]
	\arrow["{=}", from=1-2, to=1-3]
	\arrow[from=1-2, to=2-2]
	\arrow["{\mathrm{id}\otimes \mathrm{cosp}}", from=1-3, to=2-3]
	\arrow[from=2-1, to=2-2]
	\arrow["{=}", from=2-1, to=3-1]
	\arrow[from=2-2, to=2-3]
	\arrow[from=2-2, to=3-2]
	\arrow[from=2-3, to=3-3]
	\arrow["{\mathrm{id}\otimes \mathrm{sp}}", from=3-1, to=3-2]
	\arrow[from=3-1, to=4-1]
	\arrow["{\mathrm{id}\otimes \mathrm{can}}", from=3-2, to=3-3]
	\arrow["{\mathrm{id}\otimes\iota}", from=3-2, to=4-2]
	\arrow["{\mathrm{id}\otimes \mathrm{var}}", from=3-3, to=4-3]
	\arrow[from=4-1, to=4-2]
	\arrow["{=}", from=4-2, to=4-3]
\end{tikzcd}
\end{equation}
to
\begin{equation}\label{eqn_big_diag_omega}
% https://q.uiver.app/#q=WzAsMTIsWzAsMCwiMCJdLFswLDMsIjAiXSxbMiwzLCJcXFBzaVxcb21lZ2FfWCgtMSleXFx0YXUiXSxbMiwwLCJcXFBzaVxcb21lZ2FfWCgtMSleXFx0YXVbLTFdIl0sWzIsMSwiaV4hXFxvbWVnYV9YIl0sWzIsMiwiXFxQaGlcXG9tZWdhX1giXSxbMCwyLCJpXipcXG9tZWdhX1hbLTFdIl0sWzAsMSwiaV4qXFxvbWVnYV9YWy0xXSJdLFsxLDAsIlxcUHNpXFxvbWVnYV9YKC0xKV5cXHRhdVstMV0iXSxbMSwxLCJpXipqXypqXipcXG9tZWdhX1hbLTFdIl0sWzEsMiwiXFxQc2lcXG9tZWdhX1giXSxbMSwzLCJcXFBzaVxcb21lZ2FfWCgtMSleXFx0YXUiXSxbMyw0LCJjb3NwIl0sWzQsNV0sWzUsMiwidmFyIl0sWzYsMV0sWzcsNiwiPSJdLFswLDddLFs4LDMsIj0iXSxbOCw5XSxbOSwxMF0sWzEwLDUsImNhbiJdLFs2LDEwLCJzcCJdLFs3LDldLFswLDhdLFsxMSwyLCI9Il0sWzEwLDExLCJcXGlvdGEiXSxbMSwxMV0sWzksNF1d
\begin{tikzcd}
	0 & {\Psi\omega_X(-1)^\tau[-1]} & {\Psi\omega_X(-1)^\tau[-1]} \\
	{i^*\omega_X[-1]} & {i^*j_*j^*\omega_X[-1]} & {i^!\omega_X} \\
	{i^*\omega_X[-1]} & {\Psi\omega_X} & {\Phi\omega_X} \\
	0 & {\Psi\omega_X(-1)^\tau} & {\Psi\omega_X(-1)^\tau}
	\arrow[from=1-1, to=1-2]
	\arrow[from=1-1, to=2-1]
	\arrow["{=}", from=1-2, to=1-3]
	\arrow[from=1-2, to=2-2]
	\arrow["\mathrm{cosp}", from=1-3, to=2-3]
	\arrow[from=2-1, to=2-2]
	\arrow["{=}", from=2-1, to=3-1]
	\arrow[from=2-2, to=2-3]
	\arrow[from=2-2, to=3-2]
	\arrow[from=2-3, to=3-3]
	\arrow["\mathrm{sp}", from=3-1, to=3-2]
	\arrow[from=3-1, to=4-1]
	\arrow["\mathrm{can}", from=3-2, to=3-3]
	\arrow["\iota", from=3-2, to=4-2]
	\arrow["\mathrm{var}", from=3-3, to=4-3]
	\arrow[from=4-1, to=4-2]
	\arrow["{=}", from=4-2, to=4-3]
\end{tikzcd}
\end{equation}
% https://q.uiver.app/#q=WzAsMTIsWzAsMCwiMCJdLFswLDMsIjAiXSxbMiwzLCJcXG9tZWdhX3tYXzB9WzFdIl0sWzIsMCwiXFxvbWVnYV97WF8wfSJdLFsyLDEsIlxcb21lZ2Ffe1hfMH0iXSxbMiwyLCIwIl0sWzAsMiwiXFxvbWVnYV97WF8wfSgxKVsxXSJdLFswLDEsIlxcb21lZ2Ffe1hfMH0oMSlbMV0iXSxbMSwwLCJcXG9tZWdhX3tYXzB9Il0sWzEsMSwiXFxvbWVnYV97WF8wfSgxKVsxXVxcb3BsdXNcXG9tZWdhX3tYXzB9Il0sWzEsMiwiXFxvbWVnYV97WF8wfSgxKVsxXSJdLFsxLDMsIlxcb21lZ2Ffe1hfMH1bMV0iXSxbMyw0LCI9Il0sWzQsNV0sWzUsMl0sWzYsMV0sWzcsNiwiPSJdLFswLDddLFs4LDMsIj0iXSxbOCw5LCJhIl0sWzksMTAsImMiXSxbMTAsNV0sWzYsMTAsIj0iXSxbNyw5LCJiIl0sWzAsOF0sWzExLDIsIj0iXSxbMTAsMTEsIjAiXSxbMSwxMV0sWzksNCwiZCJdXQ==
%where $a$, $b$, $c$, and $d$ are inclusions of and projections to the corresponding factors. 
The map from Diagram \ref{eqn_big_diagram_K_L} to Diagram \ref{eqn_big_diag_omega} in particular gives a commutative square (from the top right vertical arrow):
% https://q.uiver.app/#q=WzAsNCxbMCwxLCIgaV4qS1xcb3RpbWVzIGleIUwiXSxbMCwwLCJpXipLXFxvdGltZXMoXFxQc2kgTCgtMSleXFx0YXUpWy0xXSJdLFsxLDAsIlxcUHNpXFxvbWVnYV9YKC0xKV5cXHRhdVstMV0iXSxbMSwxLCJpXiFcXG9tZWdhX1giXSxbMSwwLCJpZFxcb3RpbWVzIGNvc3AiLDJdLFsyLDMsImNvc3AiXSxbMCwzXSxbMSwyLCJcXGdhbW1hJyJdXQ==
\[\begin{tikzcd}
	{i^*K\otimes(\Psi L(-1)^\tau)[-1]} & {\Psi\omega_X(-1)^\tau[-1]} \\
	{ i^*K\otimes i^!L} & {i^!\omega_X}
	\arrow["{\gamma'}", from=1-1, to=1-2]
	\arrow["{\mathrm{id}\otimes \mathrm{cosp}}"', from=1-1, to=2-1]
	\arrow["\mathrm{cosp}", from=1-2, to=2-2]
	\arrow[from=2-1, to=2-2]
\end{tikzcd}\]
Concatenate with the one induced by $(iii)$, use the isomorphism $\beta$, and apply $\piwa[1]$, we get:
\begin{equation}\label{eqn_pairing_square}
% https://q.uiver.app/#q=WzAsNixbMCwxLCIgaV4qS1xcb3RpbWVzIGleIUwoMSlbMV0iXSxbMCwwLCIoaV4qS1xcb3RpbWVzXFxQc2kgTCkiXSxbMSwwLCJcXFBzaVxcb21lZ2FfWCJdLFsxLDEsImleIVxcb21lZ2FfWCgxKVsxXSJdLFsyLDAsImZfMF4hXFxQc2lfMFxcb21lZ2Ffe1xcbWF0aGJme0F9XjF9Il0sWzIsMSwiZl8wXiFpXzBeIVxcb21lZ2Ffe1xcbWF0aGJme0F9XjF9KDEpWzFdIl0sWzEsMCwiXFxlcHNpbG9uIl0sWzIsMywiY29zcCgxKV5cXHRhdSJdLFswLDNdLFsxLDIsIlxcZGVsdGEiXSxbMiw0LCJtIl0sWzQsNSwiZl8wXiEoY29zcCgxKV5cXHRhdSkiXSxbMyw1LCJcXHNpbSJdLFswLDUsIlxcYWxwaGEnJyIsMSx7ImN1cnZlIjoyLCJzdHlsZSI6eyJib2R5Ijp7Im5hbWUiOiJkb3R0ZWQifX19XV0=
\begin{tikzcd}
	{(i^*K\otimes\Psi L)} & {\Psi\omega_X} & {f_0^!\Psi_0\omega_{\mathbf{A}^1}} \\
	{ i^*K\otimes i^!L(1)[1]} & {i^!\omega_X(1)[1]} & {f_0^!i_0^!\omega_{\mathbf{A}^1}(1)[1]}
	\arrow["\delta", from=1-1, to=1-2]
	\arrow["\epsilon", from=1-1, to=2-1]
	\arrow["m", from=1-2, to=1-3]
	\arrow["{\mathrm{cosp}(1)^\tau}", from=1-2, to=2-2]
	\arrow["{f_0^!(\mathrm{cosp}(1)^\tau)}", from=1-3, to=2-3]
	\arrow[from=2-1, to=2-2]
	\arrow["{\alpha''}"{description}, curve={height=12pt}, dotted, from=2-1, to=2-3]
	\arrow["\sim", from=2-2, to=2-3]
\end{tikzcd}
\end{equation}
As one may verify, the top right arrow is indeed $m$, and the lower right arrow is the usual one ($i^!f^!\simeq f_0^!i_0^!$). The composition of the lower two arrows is labelled as $\alpha''$. We claim that this gives the desired Diagram \ref{eqn_two_pairings}. More precisely, we need to provide natural homotopies for the following three pairs: (a) $\delta$ and $\mathrm{pairing}\circ\gamma$, (b) $\epsilon$ and $\mathrm{id}\otimes (\mathrm{cosp}\piwa)$, (c) $f_0^!(\mathrm{cosp}\piwa)$ and the isomorphism in Diagram \ref{eqn_two_pairings}. Then, Diagram \ref{eqn_two_pairings} is obtained by defining $\alpha'$ (resp. the homotopy) there as $\alpha''$ (resp. the one induced from Diagram \ref{eqn_pairing_square}). In the following computations, we will label terms in Diagram \ref{eqn_big_diagram_K_L} by their rows and columns. For example, the top right $i^*K\otimes(\Psi L\miwa)[-1]$ is term (1,3).\\\\
(a). We compute $\delta$ (which lives at (1,3)) as follows:
\begin{align*}
\delta&=\piwa[1]((i^*K\otimes \Psi L)\miwa[-1]\xrightarrow[\sim]{\beta^{-1}}i^*K\otimes(\Psi L\miwa)[-1]\xrightarrow{\gamma'}\Psi\omega_X(-1)^\tau[-1]) \\
& \simeq \piwa((i^*K\otimes \Psi L)\miwa\xrightarrow[\sim]{\beta^{-1}}i^*K\otimes(\Psi L\miwa)\xrightarrow{\gamma'}\Psi\omega_X(-1)^\tau) \\
& \simeq \piwa((i^*K\otimes \Psi L)\miwa\xrightarrow{(\mathrm{pairing}\circ\gamma)\miwa}\Psi\omega_X(-1)^\tau)\\
& \simeq (i^*K\otimes \Psi L)\xrightarrow{(\mathrm{pairing}\circ\gamma)}\Psi\omega_X
\end{align*}
From the first to the second line, we moved from (1,3) to (4,2) using the equality (1,2)$\rightarrow$(1,3) and the shift, and in the third we used the last column of Diagram \ref{eqn_THE_diagram}.\\\\
(b). Before computing $\epsilon$, we make a preliminary observation. Namely, we have the following canonical isomorphism $(i^*K\otimes \Upsilon L)\piwa\isoto i^*K\otimes ((\Upsilon L)\piwa)$:
% https://q.uiver.app/#q=WzAsNCxbMCwwLCJpXipLXFxvdGltZXMgXFxQc2kgTCJdLFswLDEsImleKktcXG90aW1lcyAoXFxQc2kgTCAoLTEpXlxcdGF1KSJdLFsxLDAsImleKktcXG90aW1lcyAoXFxQc2kgTCAoMSleXFx0YXUpKC0xKV5cXHRhdSJdLFsxLDEsIihpXipLXFxvdGltZXMgXFxQc2kgTCkgKC0xKV5cXHRhdSJdLFswLDEsImlkXFxvdGltZXNcXGlvdGEiLDJdLFswLDIsIlxcYmV0YSJdLFsxLDMsIlxcYmV0YSJdLFsyLDMsIihpZFxcb3RpbWVzXFxpb3RhKSgtMSleXFx0YXUiXV0=
% https://q.uiver.app/#q=WzAsNCxbMCwwLCIoaV4qS1xcb3RpbWVzIFxcUHNpIEwpKDEpXlxcdGF1Il0sWzAsMSwiKGleKktcXG90aW1lcyAoXFxQc2kgTCAoLTEpXlxcdGF1KSkoMSleXFx0YXUiXSxbMSwwLCJpXipLXFxvdGltZXMgKFxcUHNpIEwgKDEpXlxcdGF1KSJdLFsxLDEsImleKktcXG90aW1lcyBcXFBzaSBMIl0sWzAsMSwiKGlkXFxvdGltZXNcXGlvdGEpKDEpXlxcdGF1IiwyXSxbMCwyLCJcXGJldGEoMSleXFx0YXUiXSxbMSwzLCJcXGJldGEoMSleXFx0YXUiXSxbMiwzLCJpZFxcb3RpbWVzXFxpb3RhIl1d
% https://q.uiver.app/#q=WzAsNixbMiwwLCIoaV4qS1xcb3RpbWVzIFxcUHNpIEwpKDEpXlxcdGF1Il0sWzIsMSwiaV4qS1xcb3RpbWVzIChcXFBzaSBMICgxKV5cXHRhdSkiXSxbMCwwLCIoaV4qS1xcb3RpbWVzIGleKkwpKDEpXlxcdGF1Wy0xXSJdLFswLDEsImleKktcXG90aW1lcyAoaV4qTCgxKV5cXHRhdSlbLTFdIl0sWzQsMCwiKGleKktcXG90aW1lcyAoXFxQc2kgTCAoLTEpXlxcdGF1KSkoMSleXFx0YXUiXSxbNCwxLCJpXipLXFxvdGltZXMgXFxQc2kgTCJdLFswLDEsIlxcYmV0YSgxKV5cXHRhdSJdLFsyLDMsIlxcc2ltZXEiLDJdLFsyLDAsIihpZFxcb3RpbWVzIHNwKSgxKV5cXHRhdSJdLFszLDEsImlkXFxvdGltZXMgKHNwKDEpXlxcdGF1KSJdLFs0LDUsIlxcYmV0YSgxKV5cXHRhdSJdLFswLDQsIihpZFxcb3RpbWVzIFxcaW90YSkoMSleXFx0YXUiXSxbMSw1LCJpZFxcb3RpbWVzIChcXGlvdGEoMSleXFx0YXUpIl0sWzAsMSwiXFxzaW1lcSIsMl0sWzQsNSwiXFxzaW1lcSIsMl1d
\[\begin{tikzcd}
	{(i^*K\otimes i^*L)(1)^\tau[-1]} && {(i^*K\otimes \Psi L)(1)^\tau} && {(i^*K\otimes (\Psi L (-1)^\tau))(1)^\tau} \\
	{i^*K\otimes (i^*L(1)^\tau)[-1]} && {i^*K\otimes (\Psi L (1)^\tau)} && {i^*K\otimes \Psi L}
	\arrow["{(\mathrm{id}\otimes sp)(1)^\tau}", from=1-1, to=1-3]
	\arrow["\simeq"', from=1-1, to=2-1]
	\arrow["{(\mathrm{id}\otimes \iota)(1)^\tau}", from=1-3, to=1-5]
	\arrow["{\beta(1)^\tau}", from=1-3, to=2-3]
	\arrow["\simeq"', from=1-3, to=2-3]
	\arrow["{\beta(1)^\tau}", from=1-5, to=2-5]
	\arrow["\simeq"', from=1-5, to=2-5]
	\arrow["{\mathrm{id}\otimes (\mathrm{sp}(1)^\tau)}", from=2-1, to=2-3]
	\arrow["{\mathrm{id}\otimes (\iota(1)^\tau)}", from=2-3, to=2-5]
\end{tikzcd}\]
where the homotopy on the left is clear (and we will identify the left two terms with $i^*K\otimes i^! L(1)$), and on the right is induced from the functoriality of $\beta$. We leave the details to the reader. It induces an isomorphism between the full diagrams. In particular, we have a commuative square (from the top right vertical arrow):
\begin{equation}\label{eqn_cosp_commutativity}
% https://q.uiver.app/#q=WzAsNCxbMSwwLCJpXipLXFxvdGltZXMgXFxQc2kgTFstMV0iXSxbMCwwLCIoaV4qS1xcb3RpbWVzIChcXFBzaSBMICgtMSleXFx0YXUpKSgxKV5cXHRhdVstMV0iXSxbMCwxLCIoaV4qS1xcb3RpbWVzIGleIUwpKDEpXlxcdGF1Il0sWzEsMSwiaV4qS1xcb3RpbWVzIChpXiFMKDEpXlxcdGF1KSJdLFsxLDAsIlxcYmV0YSgxKV5cXHRhdSJdLFsxLDIsIihpZFxcb3RpbWVzIGNvc3ApKDEpXlxcdGF1IiwyXSxbMCwzLCJpZFxcb3RpbWVzIChjb3NwKDEpXlxcdGF1KSJdLFsyLDMsIlxcc2ltIl1d
\begin{tikzcd}
	{(i^*K\otimes (\Psi L (-1)^\tau))(1)^\tau[-1]} & {i^*K\otimes \Psi L[-1]} \\
	{i^*K\otimes i^!L(1)} & {i^*K\otimes i^!L(1)}
	\arrow["{\beta(1)^\tau}", from=1-1, to=1-2]
	\arrow["{(\mathrm{id}\otimes \mathrm{cosp})(1)^\tau}"', from=1-1, to=2-1]
	\arrow["{\mathrm{id}\otimes (\mathrm{cosp}(1)^\tau)}", from=1-2, to=2-2]
	\arrow["\sim", from=2-1, to=2-2]
\end{tikzcd}
\end{equation}
Note the top arrow is indeed $\beta\piwa$ as can be seen by tracing the diagrams from (4,2) to (1,2) then to (1,3). We can now compute $\epsilon$:
\begin{align*}
\epsilon&=\piwa[1]((i^*K\otimes \Psi L)\miwa[-1]\xrightarrow[\sim]{\beta^{-1}}i^*K\otimes(\Psi L\miwa)[-1]\xrightarrow{\mathrm{id}\otimes \mathrm{cosp}}i^*K\otimes i^!L) \\
& \simeq ((i^*K\otimes \Psi L)\xrightarrow[\sim]{\beta^{-1}\piwa}i^*K\otimes(\Psi L\miwa)\piwa\xrightarrow{(\mathrm{id}\otimes \mathrm{cosp})\piwa}i^*K\otimes i^!L(1)[1]) \\
& \simeq (i^*K\otimes \Psi L\xrightarrow{\mathrm{id}\otimes (\mathrm{cosp}\piwa)}i^*K\otimes i^!L(1)[1]) \,\, (\text{using Diagram \ref{eqn_cosp_commutativity}}).
\end{align*}
(c). The isomorphism in Diagram \ref{eqn_two_pairings} is $f_0^!$ applied to $\Psi_0\omega_{\AAA^1}\xrightarrow{\mathrm{sp}^{-1}}i_0^*\omega_{\AAA^1}[-1]\xrightarrow{\mathrm{purity}}i_0^!\omega_{\AAA^1}(1)[1]$. View $\mathrm{purity}\circ \mathrm{sp}^{-1}$ and $\mathrm{cosp}\piwa$ as elements in $\mathrm{Aut}_{D(\Lambda)}(\omega_{\{0\}}(1)[1])\simeq \mathrm{Aut}_{\mathrm{Mod}(\Lambda)}(\Lambda)\simeq \Lambda^\X$ (here $\mathrm{Mod}(\Lambda)$ is the abelian category of $\Lambda$-modules). An explicit computation (use Lemma \ref{lem_iwasawa_basics}.2) shows that both correspond to element $1\in \Lambda^\X$.\footnote{In other words, the composition $i_0^*\underline{\Lambda}_{\AAA^1}\xrightarrow{\mathrm{sp}}\psi_0\underline{\Lambda}_{\AAA^1}\xrightarrow{\mathrm{cosp}\piwa}i_0^!\underline{\Lambda}_{\AAA^1}(1)[2]$ equals $i_0^*\underline{\Lambda}_{\AAA^1}\xrightarrow{\mathrm{purity}}i_0^!\underline{\Lambda}_{\AAA^1}(1)[2]$ in $\mathrm{Mod}(\Lambda)$.} This completes the construction of Diagram \ref{eqn_two_pairings}, hence Diagram \ref{eqn_phi_square_duality}.\\\\
\underline{Step 3}. Taking the cone in Diagram \ref{eqn_phi_square_duality}, we get:
\begin{equation}\label{eqn_verdict_duality}
% https://q.uiver.app/#q=WzAsNixbMiwwLCJcXFBzaVxcbWF0aGJie0R9XFxtYXRoY2Fse0Z9Il0sWzIsMSwiKFxcbWF0aGJie0R9XFxQc2lcXG1hdGhjYWx7Rn0pKDEpIl0sWzQsMCwiXFxQaGlcXG1hdGhiYntEfVxcbWF0aGNhbHtGfSJdLFs0LDEsIihcXG1hdGhiYntEfVxcUGhpXFxtYXRoY2Fse0Z9KSgtMSleXFx0YXUoMSkiXSxbMCwwLCIoaV4qXFxtYXRoYmJ7RH1cXG1hdGhjYWx7Rn0pWy0xXSJdLFswLDEsIihcXG1hdGhiYntEfWleIVxcbWF0aGNhbHtGfSlbLTFdIl0sWzAsMSwiZyJdLFswLDEsIlxcc2ltZXEiLDJdLFsyLDMsImgiXSxbMCwyLCJjYW4iXSxbMSwzLCJcXG1hdGhiYntEfSh2YXIoMSleXFx0YXUoLTEpKSJdLFs0LDUsIlxcYWxwaGEiXSxbNCwwLCJzcCJdLFs1LDEsIlxcbWF0aGJie0R9KGNvc3AoMSleXFx0YXUoLTEpKSJdXQ==
\begin{tikzcd}
	{(i^*\mathbb{D}\mathcal{F})[-1]} && {\Psi\mathbb{D}\mathcal{F}} && {\Phi\mathbb{D}\mathcal{F}} \\
	{(\mathbb{D}i^!\mathcal{F})[-1]} && {(\mathbb{D}\Psi\mathcal{F})(1)} && {(\mathbb{D}\Phi\mathcal{F})(-1)^\tau(1)}
	\arrow["\mathrm{sp}", from=1-1, to=1-3]
	\arrow["\alpha", from=1-1, to=2-1]
	\arrow["\mathrm{can}", from=1-3, to=1-5]
	\arrow["g", from=1-3, to=2-3]
	\arrow["\simeq"', from=1-3, to=2-3]
	\arrow["h", from=1-5, to=2-5]
	\arrow["{\mathbb{D}(\mathrm{cosp}(1)^\tau(-1))}", from=2-1, to=2-3]
	\arrow["{\mathbb{D}(\mathrm{var}(1)^\tau(-1))}", from=2-3, to=2-5]
\end{tikzcd}
\end{equation}
We now prove that $\alpha$ (hence $h$) is an isomorphism. If $\CF\simeq j_*j^*\CF$, this is clear as both terms are $0$. If $\CF\simeq i_*i^*\CF$, we claim that $\alpha$ can be identified with the usual isomorphism. Indeed, recall as our map $i^*K\otimes \Upsilon L\rightarrow \Upsilon\omega_X$ factors through $i^*K\otimes \Upsilon L\rightarrow \Upsilon(K\otimes L)\rightarrow \Upsilon\omega_X$ where the second arrow is induced by the pairing $K\otimes L\rightarrow\omega_X$. The map between the (2,3) terms in Diagrams \ref{eqn_big_diagram_K_L} and \ref{eqn_big_diag_omega} then factors through $i^*K\otimes i^!L\rightarrow i^!(K\otimes L)\rightarrow i^!\omega_X$. The first map is the usual isomorphism (note the $\Psi$'s in Diagram \ref{eqn_big_diagram_K_L} are $0$), and the second can be identified with the usual pairing $i^*K\otimes i^*L\rightarrow \omega_{X_0}$.
\end{proof}

\begin{remark}[for Theorem \ref{thm_phi_t_exact_duality}.2]
    (1) The same proof works for algebraic varieties over an algebraically closed field, except that in positive characteristic, one should replace $\Psi$ (resp. $\Phi$) above by $\Psi^P$ (resp. $\Phi^P$) (see Remark \ref{rmk_t_non_t_decomposition}, Corollary \ref{cor_u_nu_decomp}) and $\mu$ by $\ZZ_\ell(1)$, then use $^P\!\Psi\simeq\!^P\!\Phi$ and Theorem \ref{thm_psi_t_exact_duality}.2.\\
    (2) We do not know if $\alpha$ always coincides with the usual isomorphism. We also do not know if the usual isomorphism gives a commutative square in Diagram \ref{eqn_phi_square_duality}.\\ 
    (3) It seems that the above proof essentially gives a duality theorem for $\Upsilon$ and its “cone”. It would be interesting to know the precise relation of this with the duality theorems for $\Psi^s$ and $LCo$ in \cite[Theorem 0.1]{lu_duality_2019}.
\end{remark}

\printbibliography

\Addresses

\end{document}